\newtheorem{theo}{Theorem}[section]
\newtheorem{lm}{Lemma}[section]
\newtheorem{cor}{Corollary}[section]
\newtheorem{rmk}{Remark}[section]
\newtheorem{rmks}{Remarks}[section]
\newtheorem{proposition}{Proposition}[section]
\numberwithin{equation}{section}
\def\R{{\mathbb R}}
\def\Z{{\mathbb Z}}
\def\N{{\mathbb N}}
\def\Exp{{\mathbb E}}
\def\EE{{\cal E}}
\def\Pr{{\mathbb P}}
\def\1{{\mathbf 1}}
\def\sgn{\mathop{\mathrm{sgn}}\nolimits}
\def\eps{\varepsilon}
\def\be{{\mathbf e}}
\def\re{{\mathrm{e}}}
\def\bx{{\mathbf x}}
\def\by{{\mathbf y}}
\def\bu{{\mathbf u}}
\def\bv{{\mathbf v}}
\def\0{{\mathbf 0}}
\def\SS{{\cal S}}
\newcommand{\F}{{\mathcal F}}
\newcommand{\as}{\ \textrm{a.s.}}
\title{Excursions and path functionals for stochastic processes with asymptotically zero drifts\footnote{
We benefitted in the early stages of this project from enjoyable discussions with
Iain  MacPhee, who sadly passed away on 13th January 2012; we dedicate this paper to Iain, in memory of
our valued  colleague and in gratitude for his generosity.}}
\author{Ostap Hryniv\footnote{Department of Mathematical Sciences, University of Durham, South Road, Durham, DH1 3LE, UK.} \and  Mikhail V.\ Menshikov\footnotemark[1]
\and  Andrew R.\ Wade\footnote{Department of Mathematics and Statistics, University of Strathclyde, 26 Richmond Street, Glasgow, G1 1XH, UK.}}
\begin{document}

\maketitle

\begin{abstract}
We study discrete-time stochastic processes $(X_t)$ on $[0,\infty)$ with asymptotically zero mean drifts.
Specifically, we consider the critical (Lamperti-type) situation in which the mean drift at $x$
is about $c/x$. Our   focus is the recurrent case (when $c$ is not too large). We give sharp asymptotics
for various functionals associated with the process and its excursions, including results on maxima and return times.
These results include improvements on existing results in the literature in several respects, and also include
 new results on excursion sums and additive functionals
of the form $\sum_{s \leq t} X_s^\alpha$, $\alpha >0$. We make minimal moments assumptions on the increments of the process.
Recently there has been renewed interest in Lamperti-type process in the context of random polymers and interfaces, particularly nearest-neighbour random walks on the integers;
some of our results are new even in that setting.
We give applications of our results   to processes on the whole of $\R$ and to
a class of multidimensional  `centrally biased'  random walks on $\R^d$; we also apply our results to the simple harmonic urn,
allowing us to sharpen existing results and to verify a
conjecture of Crane {\em et al.}
\end{abstract}

\smallskip
\noindent
{\em Keywords:} Path functional; excursion; maximum; passage-time; additive functional; path integral; Lamperti's problem;
centre of mass; centrally biased random walk. \/

\noindent
{\em AMS 2010 Subject Classifications:} 60G17, 60J10, 60J55, 60F15, 82D60

\section{Introduction}

The study of functionals defined on paths of stochastic processes is a topic with classical foundations and extensive applications
in modern probability; such functionals give a quantitative encapsulation of
both probabilistic information about the recurrence behaviour of the process and
 geometrical information about the way in which the process explores the state-space. A substantial body of work is devoted to {\em additive functionals} of the form
$\sum_{s=1}^t \Phi ( X_s )$,
where $X_1, X_2, \ldots$ is a discrete-time stochastic process on $\R^d$ and $\Phi : \R^d \to \R$ is a given measurable function.
The most basic choice, in which $\Phi ( x)$ is taken to be  the indicator function $\1 \{ x \in A\}$
of a Borel set $A \subseteq \R^d$, leads to occupation time for $A$.
In the most well-studied case, $X_t$ is a sum of i.i.d.\ random variables;
the monograph by Borodin and Ibragimov \cite{bi} is devoted to limit theory in the i.i.d.\ setting.
There is also much work devoted to the case in which $X_t$ is an {\em ergodic} Markov
process. Classical work goes back to  Markov
and   Bernstein (cf.\ \cite[p.\ 2299]{koro}). If   $\Phi$ is integrable with respect
to the stationary distribution of the process, then a large collection
of `ergodic theorems' and distributional limit theorems
(after suitable scaling and under various conditions) are known,
and represent an active area of research: see e.g.\
\cite{maxwoo,jko,kv,cscs,koro} and references therein for an indication of the extensive literature.

In the present paper, we study
stochastic processes on $\R^d$ of a more general type (assuming a regenerative property only, rather than the Markov property)
and in the {\em near-critical} situation from the point of view of the asymptotic behaviour of the process. Near-criticality entails that the
one-step mean drift of $X_t$ is {\em asymptotically zero}
in a sense that we describe more precisely later on.
Processes with asymptotically-zero drifts are   of interest in their own right
 for exploring phase transitions in asymptotic behaviour, as first described in the general setting in fundamental work of Lamperti \cite{lamp1,lamp3}.
A  consequence of near-criticality is that many important
distributions associated with such processes display
heavy-tailed behaviour; these include
passage times \cite{aim,lamp3} and, if they exist, stationary distributions \cite{mp}.
In many cases of interest, these and other quantities have natural scaling exponents that
depend on the details of the  process.

Moreover, such processes
are important from the point of view of applications for two main reasons: first, they serve as prototypical near-critical stochastic systems
and hence for the development of new techniques, and second, they can often be extracted from more complex near-critical systems via the method of Lyapunov functions, to powerful analytic effect. One classical but important illustration of the latter point
is provided by the Lyapunov function approach to P\'olya's theorem on the recurrence/transience of symmetric simple
random walk $S_t$ on $\Z^d$:  P\'olya's theorem can be understood in an entirely one-dimensional
setting by taking $X_t = \| S_t\|$, in which case the process $X_t$ has
asymptotically zero drift in the sense that
\[ \Exp [ X_{t+1} - X_t \mid S_t = \bx ] = c_d \| \bx \|^{-1} + O( \| \bx \|^{-2} ) ,\]
for some constant $c_d >0$ that depends only on $d$. Lamperti's recurrence
classification for such processes \cite{lamp1} implies P\'olya's theorem. Importantly, the same
technique works for a very large class of random walks; in particular, the Markov property is not essential.

For these asymptotically zero drift processes, we study a class of additive functionals (or {\em path integrals}) of the form
$\sum_{s=1}^t X_s^\alpha$, $\alpha \geq 0$. Moreover, we study the maximum functional $\max_{1 \leq s \leq t } X_s$
(which corresponds in a certain sense to the $\alpha \to \infty$ limit of the additive functional).
We are interested in the large-$t$ asymptotics of such functionals, in the case where $X_t$ is recurrent.
Our primary interest is not in the case where the process has sufficient `ergodicity' properties that
$t^{-1} \sum_{s=1}^t X_s^\alpha$ converges, but rather in the case where $\sum_{s=1}^t X_s^\alpha$ grows
faster than linearly, including the case where $X_t$ is {\em null-}recurrent. This   is rather different
to the emphasis of the classical work cited above, and accords with our focus on systems that are
{\em near-critical} in some sense.

We make some further remarks on applications and related results. Borovkov {\em et al.} \cite{bbp} consider analogues   for queueing models
of the path integrals that we study.
As far as the authors are aware,
there has been little work specifically concerned with additive functionals of processes with asymptotically zero drift.
For a particular (null-recurrent) example of a nearest-neighbour random walk on the nonnegative integers, Fal' \cite{fal78}
proved distributional limits for functionals such as $\sum_{s=1}^t (1+X_s)^{-\gamma}$, $\gamma > 0$ sufficiently large. Our interest
is in functionals of the opposite nature.

A (normalization of a) particularly important path integral
is the {\em center-of-mass} process associated with $X_t$ defined by $G_t = t^{-1} \sum_{s=1}^t X_s$.
The behaviour of $G_t$ for Markov processes is only partially understood beyond the
case in which sufficient ergodicity ensures that $G_t$ converges to a limit.
For the centre-of-mass associated with simple random walk
on $\Z^d$,
Grill
\cite{grill} proves the interesting result  that (compact-set) recurrence is present if and only
if $d=1$. The desire to understand Grill's result more generally was one of the original
motivations for the work of the present paper;  by analogy with
  Lamperti \cite{lamp1,lamp3}, it is natural
 to begin  in the   setting of processes with asymptotically zero drifts.

Our approach is via a detailed study of the {\em excursions} of the process $X_t$, in which, once again,
the heavy-tailed nature of the characteristics of the processes becomes evident. Thus, if $\eta$ denotes the
duration of an excursion, we are led to the study of {\em excursion functionals} such as $\sum_{s=1}^\eta X_s^\alpha$
(including the special case $\alpha =0$ of $\eta$ itself) and $\max_{1 \leq s \leq \eta} X_s$. As well as being key
ingredients in the proofs of our large-$t$ asymptotics, these quantities are of interest in their own right in various
theoretical and applied contexts. For example, to apply Theorem 2.1 of \cite{cscs} one needs
to understand   tail properties of an analogue of $\sum_{s=1}^\eta X_s^\alpha$;
  sums over excursions for processes with asymptotically zero drift
turn out to be central to the analysis of the `simple harmonic urn' \cite{shu} (see also Section \ref{sec:shu} below).

In the last decade or so, significant interest in processes with asymptotically zero drifts has come from
a community of probabilists and statistical physicists from the point of view of modelling the configurations of polymers
and interfaces. A now standard approach in this field is to take as an underlying model a nearest-neighbour random
walk with an asymptotically zero drift: see for example \cite{alexander,dec,huillet}.
Such nearest-neighbour models are amenable to explicit calculation, often via intricate algebraic
methods such as Karlin--McGregor spectral theory and orthogonal polynomials \cite{karlin}; other recent work on these models, not
directly motivated by polymer models, includes for example \cite{cfr,gallardo,voit90,km}. This continued interest
in asymptotically zero drift processes in the nearest-neighbour case is another motivation for the present paper,
in which we present related results for a much more general class of models. We discuss the relation of our results to some
of this recent work in more detail in Section \ref{sec:poly}.

The outline of the remainder of the paper is as follows. In Section \ref{sec:results} we give a formal statement
of our half-line model and state our results on excursions and functionals in a series of subsections. In Section \ref{sec:appl}
we give applications of our half-line results to processes on the whole line (Section \ref{sec:line})
and to multidimensional processes including centrally biased random walks on $\R^d$ (Section \ref{sec:nonhom})
and the simple harmonic urn (Section \ref{sec:shu}). Also, in Section \ref{sec:poly}, we make some remarks
on how our model and results complement recent results, restricted to nearest-neighbour random walks, in the
context of models of random polymers and interfaces. The proofs of the results in Sections \ref{sec:results}
and \ref{sec:appl}
are given in Sections \ref{sec:proofs} and \ref{sec:proofs2} respectively.

   \section{Main results on path functionals}
   \label{sec:results}

\subsection{Description of the model}

   We formally describe our process  $X:=(X_t)_{t \in \N}$  ($\N:= \{1,2,\ldots\}$) and our
    structural assumptions on its state-space $\SS$. Recall that
   a subset $R$ of $\R^d$ is {\em locally finite} if $R \cap H$ is finite for all bounded $H \subset \R^d$. Our basic assumption is the following.
   \begin{itemize}
   \item[(A0)] (a) Let $\SS$ be a locally finite, unbounded subset of $[0,\infty)$ with $0 \in \SS$.

   (b)
 Suppose that
    $(X_t)_{t \in \N}$ is an $\SS$-valued process adapted to a filtration $(\F_t)_{t \in \N}$, and $\Pr [ X_1 =0 ] =1$.
\end{itemize}

  We also assume the following form of `irreducibility'.

   \begin{itemize}
   \item[(A1)] Suppose that for each $x, y \in \SS$ there exist $m(x,y) \in \N$
   and $\varphi(x,y) >0$ such that
   \begin{equation}
   \label{irred}
    \Pr [ X_{t+m(X_t,y)} = y \mid \F_t ] \geq \varphi (X_t, y), \as , ~{\textrm{for all}} ~ t \in \N .\end{equation}
    \end{itemize}
   If $X$ is a time-homogeneous Markov process, (\ref{irred}) reduces to the
   usual sense of irreducibility that, for any $x, y \in \SS$, there exists
   $m(x,y) \in \N$ such that
   $\Pr [ X_{m(x,y)} =y \mid X_1 =x] >0$.
   The assumption (\ref{irred}) allows us to work with more general processes, such as functions of
   Markov process: see the discussion at the end of this subsection.
   A consequence of (A0) and (A1) is that $\limsup_{t \to \infty} X_t = \infty$, a.s.; see
   Proposition \ref{dichot} below.

  We make some `Lamperti-style' assumptions on the   increments of $X$. Throughout we use the notation
  $\Delta_t := X_{t+1} - X_t$.
  We will typically need to assume that for some $p >2$ (at least), some $\delta > 0$,
  and some constant
  $C \in (0,\infty)$, for all $t\in\N$,
  \begin{equation}
  \label{moms}
  \Exp [ | \Delta_t |^p \mid \F_t ] \leq C (1+X_t)^{p-2-\delta} , \as \end{equation}

   Given that (\ref{moms}) holds for some $p>2$, $\Exp [ \Delta_t^k \mid \F_t ]$
   is a.s.\ finite for $k \in \{1,2\}$. We make some further assumptions on the moments
   of the increments, as follows. For notational convenience, throughout the paper
   we write $\log^q x$ for $(\log x)^q$, $q \in \R$.

   \begin{itemize}
   \item[(A2)]
   Suppose that for some $c \in \R$ and $s^2 \in (0,\infty)$, as $X_t \to \infty$,
  \begin{align}
  \label{mu1}
   \Exp [ \Delta_t \mid \F_t ] & = c X_t^{-1} + o (X_t^{-1} \log^{-1} X_t ), \as, \\
   \label{mu2}
   \Exp [ \Delta_t^2 \mid \F_t ] & = s^2 + o( \log^{-1} X_t ) , \as \end{align}
\end{itemize}
We make an important note on notation: our usage assumes that  implicit constants in
   Landau $O( \; \cdot \; )$,
$o( \; \cdot \; )$ symbols are {\em non-random} and independent of $t$, so that asymptotic expressions such as (\ref{mu1}) and (\ref{mu2})
are understood to hold uniformly in $t$ and probability space elements $\omega$ (on a set of probability 1). So, for example, (\ref{mu2})
means that for any $\eps>0$ we can choose $x < \infty$ so that $| \Exp [ \Delta_t^2 \mid \F_t ] - s^2 |  \leq \eps / \log X_t$, a.s.,
on $\{ X_t > x\}$, for any $t \in \N$.

We study
$X$ via its {\em excursions}
  from $0$.
   Set $\tau_0 := 1$ and for $n \in \N$ define
  \[ \tau_n := \min \{ t > \tau_{n-1} : X_t = 0 \} ,\]
  with the usual convention that $\min \emptyset = \infty$.
  That is,
 $\tau_0, \tau_1, \tau_2, \ldots$ are the successive times of visit to the origin by $X$;
  if $X$ visits $0$ only finitely often then $\tau_n = \infty$ for all $n$ large enough.
  When $\tau_n < \infty$ we denote, for $n \in \N$,
  $\eta_n := \tau_n - \tau_{n-1}$,
  the duration of the $n$th excursion; also set $\eta_0:=1$.
 Provided that $\tau_n < \infty$, we denote the
 $n$th {\em excursion} ($n \in \N$) by $\EE_n :=  (X_t)_{\tau_{n-1} \leq t \leq \tau_{n} -1}$.
  Let $N := \min \{ n \in \N : \tau_n = \infty \}$.

  \begin{itemize}
  \item[(A3)]
(a)
 Suppose that, on $\{ N = \infty \}$,  $(\EE_n)_{n \in \N}$ is an i.i.d.\ sequence.

 (b)
 Suppose that, for all $n \in \N$,
 $\Pr [ \eta_{n+1} < \infty \mid \tau_n < \infty] = \Pr [ \eta_1 < \infty ]$.
  \end{itemize}

 Part (a) of (A3) assumes a full regenerative structure
  in the case of an infinite number of returns to $0$. Part (b) makes a weaker assumption,
  needed to deal with the event $\{ N < \infty \}$.
   A useful reference for regenerative processes is \cite[Chapter VI]{asmussen}.

  Our irreducibility and regenerative assumptions have the following basic consequence.

   \begin{proposition}
   \label{dichot}
   Suppose that (A0) and (A1) hold. Then  $\limsup_{t \to \infty} X_t = \infty$, a.s.
   If, in addition, part (b) of (A3) also holds, then either:
   \begin{itemize}
   \item[(i)] (transience) $\Pr [ \eta_1 < \infty ] <1$ and $\lim_{t \to \infty} X_t = \infty$  a.s.; or
   \item[(ii)] (recurrence) $\Pr [ \eta_1 < \infty ] =1$ and $\liminf_{t \to \infty} X_t = 0$  a.s.
   \end{itemize}
   \end{proposition}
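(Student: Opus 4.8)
The plan is to derive all the conclusions from the irreducibility assumption (A1) by means of a conditional Borel--Cantelli argument, invoking the regenerative assumption (A3)(b) only to organise the recurrence/transience dichotomy. Throughout, the only structural facts used about $\SS$ are its local finiteness and unboundedness.

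First I would prove $\limsup_{t\to\infty}X_t=\infty$ a.s. Since $\{\limsup_t X_t<\infty\}$ is the countable union over $M,t_0\in\N$ of the events $B:=\{X_t\le M\text{ for all }t\ge t_0\}$, it suffices to show each such $B$ is null. Fix $M,t_0$. The set $F:=\SS\cap[0,M]$ is finite by (A0)(a), and by unboundedness of $\SS$ we may fix $y\in\SS$ with $y>M$; put $m^*:=\max_{x\in F}m(x,y)<\infty$ and $\varphi^*:=\min_{x\in F}\varphi(x,y)>0$. For integers $k\ge 0$ let $n_k:=t_0+km^*$ and $C_k:=\{X_s=y\text{ for some }s\in(n_k,n_{k+1}]\}$. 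Since $m(x,y)\le m^*$, assumption (A1) gives $\Pr[C_k\mid\F_{n_k}]\ge\varphi^*$ on $\{X_{n_k}\in F\}$, so $\sum_k\Pr[C_k\mid\F_{n_k}]=\infty$ a.s.\ on $B$; as $C_k\in\F_{n_{k+1}}$, the conditional Borel--Cantelli lemma (L\'evy's extension) shows that a.s.\ on $B$ infinitely many $C_k$ occur, which forces $X_s=y>M$ for arbitrarily large $s$ and contradicts $B$. Hence $\Pr[B]=0$.

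For the dichotomy, assume in addition (A3)(b) and set $\rho:=\Pr[\eta_1<\infty]$. Since $\{\tau_{n+1}<\infty\}=\{\tau_n<\infty\}\cap\{\eta_{n+1}<\infty\}$, (A3)(b) gives $\Pr[\tau_{n+1}<\infty]=\rho\,\Pr[\tau_n<\infty]$, hence $\Pr[\tau_n<\infty]=\rho^n$ for all $n$. If $\rho=1$, then $\Pr[\tau_n<\infty]=1$ for every $n$, so $X$ hits $0$ infinitely often a.s.\ and therefore $\liminf_t X_t=0$ a.s., which is case (ii). If $\rho<1$, then $\Pr[N>n]=\Pr[\tau_n<\infty]=\rho^n\to 0$, so $N<\infty$ a.s., i.e.\ $X$ hits $0$ only finitely often a.s. Since we already know $\limsup_t X_t=\infty$ a.s., case (i) will follow once we rule out $\liminf_t X_t<\infty$. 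If $\Pr[\liminf_t X_t<\infty]>0$, then by local finiteness of $\SS$ there is $L\in\SS$ with $\Pr[X_t=L\text{ infinitely often}]>0$; applying (A1) at the successive stopping times of visits to $L$, thinned so as to be at least $m(L,0)$ apart, and repeating the Borel--Cantelli argument of the previous paragraph with $y$ replaced by $0$, we would conclude that $X$ hits $0$ infinitely often with positive probability, contradicting $N<\infty$ a.s. Hence $\liminf_t X_t=\infty$, i.e.\ $X_t\to\infty$ a.s., giving case (i). The two cases are mutually exclusive and exhaustive, so this proves the dichotomy.

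The step I expect to be the main (though still routine) obstacle is the use of the conditional Borel--Cantelli lemma at the \emph{random} stopping times $\sigma_j$ of visits to $L$, as opposed to the deterministic times $n_k$ used for $\limsup$. This is dealt with in the usual way: decomposing on $\{\sigma_j=t\}$ and using that (A1) holds for every deterministic $t$, one gets $\Pr[X_{\sigma_j+m(L,0)}=0\mid\F_{\sigma_j}]\ge\varphi(L,0)$ on $\{\sigma_j<\infty\}$, and then the events over the disjoint windows $(\sigma_j,\sigma_j+m(L,0)]$ satisfy the hypotheses of a conditional second Borel--Cantelli lemma on the event $\{X_t=L\text{ i.o.}\}$. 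The remaining ingredients --- the identity $\Pr[\tau_n<\infty]=\rho^n$ and the passage from $\liminf_t X_t<\infty$ to a fixed state of $\SS$ being visited infinitely often via (A0)(a) --- are elementary book-keeping.
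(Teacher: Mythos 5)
Your proof is correct and follows essentially the same route as the paper: the paper delegates the consequences of (A0)--(A1) to its Lemma \ref{irrlem}, which is proved by exactly your L\'evy (conditional Borel--Cantelli) argument along visit times, and then obtains the dichotomy from (A3)(b) via the same repeated-conditioning identity $\Pr[\tau_n<\infty]=(\Pr[\eta_1<\infty])^n$. The only cosmetic difference is that you re-derive the needed special cases of that lemma inline (using deterministic times $n_k$ for the $\limsup$ part, stopping times at visits to $L$ for the transient case) rather than citing a single statement.
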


We give the proof of Proposition \ref{dichot} in Section \ref{sec:proofsthms}, along with the proofs of the other results that we present
in the present section.

  Before describing our main
  results, we indicate why we have chosen
  our particular assumptions. It is too restrictive for the applications
  that we have in mind to assume  that $X$ is itself
  a Markov process; our more general framework enables us to work with, for example,   $X_t = \| Y_t \|$
  where $Y_t$ is a Markov process on   $\R^d$. More generally, suppose that $(Y_t)_{t\in\N}$ is an irreducible time-homogeneous
  Markov process
   on an arbitrary countable set $\Sigma$, and let $f: \Sigma \to [0,\infty)$ be measurable such that
   $f^{-1}(x)$ is finite for each $x$.
   Let $\F_t = \sigma ( Y_1, \ldots, Y_t)$ and take $X_t = f(Y_t)$. Then $X_t$
   is $\F_t$-adapted and has the countable state-space $\SS = f(\Sigma)$. Moreover, by irreducibility,
   given  $u$ and $v$ such that $f(u) =x$ and $f(v) =y$, there exist  $\varphi (x, y) >0$
  and $m(x,y) \in \N$ such that $\Pr [ Y_{t+m(x,y)} = v \mid Y_t = u ] \geq \varphi (x,y)$,
  using the fact that, by our assumption on $\Sigma$ and $f$, there are only finitely
  many possible $u,v$ pairs for a given $x,y$. Hence (\ref{irred}) holds.
  If $f^{-1} (0) = 0$ is unique,
  (A3) follows from the strong Markov property.
  This generality
  is very useful for applications: we describe one such example in detail in Section
  \ref{sec:nonhom} below.

The remaining parts of this section are devoted to our results. Our excursion-based approach
is only applicable in the recurrent case, so first we give a recurrence classification. Then we
move on to detailed properties of excursions and the tails of associated random variables,
and finally to $t \to \infty$ asymptotics of
functionals defined on paths of the
process up to some given time $t$.  We exhibit various tail or scaling exponents for the quantities that we study;
we remark that the relationship amongst the various exponents is not always
  correctly predicted by na\"ive heuristic arguments.

   \subsection{Recurrence classification}
   \label{sec:recurrence}

 We say  $X$ is {\em transient} if $\Pr [ \eta_1 < \infty] <1$;
   otherwise it is {\em recurrent}. If recurrent, we say that $X$ is {\em
   positive}-recurrent if $\Exp [ \eta_1] < \infty$ and
   {\em null}-recurrent if $\Exp [ \eta_1]=\infty$.
  Under (A2), the quantity $-2c/s^2$ will play a central role in all that follows, and we
  introduce the notation  \begin{equation}
   \label{rdef} r:= -2c/s^2. \end{equation}
The recurrence classification for $X$ is as follows.

   \begin{theo}
   \label{recthm}
   Suppose that (A0)--(A3) hold, and  (\ref{moms}) holds with $p>2$.
   Then $X$ is \begin{itemize}
   \item[(i)] transient if $r < -1$;
   \item[(ii)] null-recurrent if $-1 \leq r \leq 1$;
   \item[(iii)] positive-recurrent if $r >1$.
   \end{itemize}
   \end{theo}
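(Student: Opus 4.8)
The natural approach is the Lyapunov function method applied to the excursions, following Lamperti's classical technique but pushed to the boundary cases using the logarithmic refinements in (A2). The plan is to find a function $f : [0,\infty) \to [0,\infty)$ such that $f(X_t)$ is, up to a controllable error, a supermartingale or submartingale on the relevant range, and then invoke standard semimartingale recurrence/transience criteria (e.g.\ those of Lamperti, or the versions in \cite{mp} / \cite[Ch.~3]{aim}). First I would fix a large level $b$ and work with the process stopped on leaving $[b,\infty)$, so that the asymptotic expansions (\ref{mu1})--(\ref{mu2}) are in force with small error terms; the behaviour below $b$ is irrelevant for recurrence since (A0)--(A1) guarantee the process cannot get stuck (Proposition~\ref{dichot} already gives $\limsup X_t = \infty$ a.s.). For the transience claim (i), when $r < -1$, i.e.\ $2c < -s^2$, I would take $f(x) = x^{-\gamma}$ for a suitable small $\gamma > 0$; a second-order Taylor expansion gives
\[
\Exp[ f(X_{t+1}) - f(X_t) \mid \F_t] = -\gamma X_t^{-\gamma-1}\Exp[\Delta_t\mid\F_t] + \tfrac{\gamma(\gamma+1)}{2} X_t^{-\gamma-2}\Exp[\Delta_t^2\mid\F_t] + (\text{error}),
\]
and substituting (\ref{mu1})--(\ref{mu2}) the leading term is $X_t^{-\gamma-2}\bigl(-\gamma c + \tfrac{\gamma(\gamma+1)}{2}s^2\bigr) + o(\cdot)$, which is negative for $\gamma$ small precisely when $c + \tfrac{s^2}{2} < 0$, i.e.\ $r < -1$; the moment bound (\ref{moms}) with $p>2$ is what controls the Taylor remainder uniformly. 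Then $f(X_{t\wedge\sigma_b})$ is a nonnegative supermartingale, and a standard argument shows $X_t$ escapes to $\infty$ with positive probability from high enough starting points, which combined with the regeneration structure (A3) upgrades to $\Pr[\eta_1<\infty]<1$.

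For the recurrence half ($r \geq -1$), the borderline case $r = -1$ is the delicate one and is exactly where the logarithmic error terms in (A2) are needed. Here I would use $f(x) = \log x$ (rather than a power), for which the expansion gives
\[
\Exp[\log X_{t+1} - \log X_t \mid \F_t] = X_t^{-1}\Exp[\Delta_t\mid\F_t] - \tfrac{1}{2}X_t^{-2}\Exp[\Delta_t^2\mid\F_t] + (\text{error}) = X_t^{-2}\bigl(c - \tfrac{s^2}{2}\bigr) + o(X_t^{-2}\log^{-1}X_t).
\]
When $r > -1$ the coefficient $c - s^2/2 = -\tfrac{s^2}{2}(r+1) < 0$, so $\log X_t$ is a supermartingale up to small errors and Lamperti's recurrence criterion applies directly. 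When $r = -1$ the leading coefficient vanishes and one must look at the $o(X_t^{-1}\log^{-1}X_t)$ term from (\ref{mu1}) together with the next-order correction from the Taylor expansion of $\log$; using instead the Lyapunov function $f(x) = \log x \cdot \log\log x$ or $f(x)=\log\log x$ and again expanding shows the drift of $\log\log X_t$ is $o(X_t^{-2}\log^{-1}X_t)$ minus a strictly negative term of order $X_t^{-2}\log^{-2}X_t$ (coming from the concavity of $\log\log$), so recurrence still holds. This iterated-logarithm bootstrap at $r=-1$ is the technical heart of the argument and is precisely why (A2) is stated with the $\log^{-1}X_t$ rate rather than merely $o(X_t^{-1})$.

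Finally, for the positive/null dichotomy within the recurrent regime I would estimate $\Exp[\eta_1]$ via the expected time to return to a fixed level, again through a Lyapunov function: taking $f(x) = x^{1+r-\eps}$ (for $r$ not an integer; logarithmic corrections when $r=1$) one computes
\[
\Exp[f(X_{t+1})-f(X_t)\mid\F_t] = (1+r-\eps)X_t^{r-\eps}\Bigl(c + \tfrac{(r-\eps)s^2}{2}\Bigr) + o(\cdot),
\]
and the sign of $c + \tfrac{(r-\eps)s^2}{2} = \tfrac{s^2}{2}(\eps - 1 - r + r) $... more carefully, $= \tfrac{s^2}{2}(r - \eps - r) $ — one chooses the exponent so that the bracket has the sign needed to make $f(X_t) + (\text{const})\cdot t$ into a sub- or supermartingale, yielding moment bounds $\Exp[\eta_1^q]<\infty$ iff $q$ is below a threshold determined by $r$; in particular $\Exp[\eta_1]<\infty$ iff $r>1$, with the case $r=1$ again requiring a logarithmic Lyapunov function to decide (it falls on the null-recurrent side). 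The main obstacle throughout is the two boundary values $r=-1$ and $r=1$, where the polynomial Lyapunov functions degenerate and one is forced into iterated-logarithm corrections whose control relies essentially on the precise $\log^{-1}X_t$ error rates postulated in (A2) and on the uniform moment bound (\ref{moms}); away from these two points the argument is the routine Lamperti computation.
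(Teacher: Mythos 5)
Your overall strategy --- $x^{-\gamma}$ for transience, $\log x$ for recurrence when $r>-1$, an iterated-logarithm function at $r=-1$, and power functions with logarithmic corrections for the positive/null dichotomy --- is exactly the route the paper takes (it, too, only sketches this theorem, adapting Lamperti \cite{lamp1,lamp3} and Menshikov {\em et al.} \cite{mai} via Lemma \ref{semilem}), and your $\log\log$ computation at $r=-1$ is the correct refinement and correctly identifies why (A2) is stated with $\log^{-1}X_t$ error rates. Two small slips that do not affect the conclusions: in the transient case the relevant sign condition is $c-\tfrac{s^2}{2}>0$ (equivalently $r<-1$), not $c+\tfrac{s^2}{2}<0$; and the drift of $X_t^{1+r-\eps}$ is of order $X_t^{r-1-\eps}$, not $X_t^{r-\eps}$.

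The genuine gap is in the null-recurrence half of part (ii), i.e.\ the claim that $\Exp[\eta_1]=\infty$ for $-1\le r\le 1$. Every construction you describe is of Foster/supermartingale type, and such arguments only ever prove \emph{finiteness} of passage-time moments; the asserted ``$\Exp[\eta_1^q]<\infty$ iff $q$ is below a threshold'' does not follow by running the same computation in reverse. Non-existence requires a separate device: a submartingale together with an optional-stopping or maximal-inequality argument (Lamperti's non-existence criterion, which in the range $-1\le r<1$ applies because $1>(1+r)/2$), or a direct lower bound on the tail of $\eta_1$. The boundary $r=1$ is precisely the case $q=(1+r)/2=1$ that the classical non-existence results do not decide, and your sentence that $r=1$ ``falls on the null-recurrent side'' after ``a logarithmic Lyapunov function'' asserts the answer without an argument. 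In the paper this point is settled either by importing \cite{mai} or by combining Lemma \ref{lowbound} with the lower bound in Theorem \ref{lem4}, which at $r=1$ gives $\Pr[\eta_1\ge x]\ge x^{-1}(\log x)^{-\eps}$ for large $x$ and hence an infinite mean; a complete proof along your lines must supply such a lower-tail (or submartingale) step. A secondary point: since $X$ is not assumed Markov, passing from ``finite mean hitting time of a bounded set'' to ``finite mean return time to $0$'', and the equivalence of the various notions of recurrence you implicitly use, rests on the irreducibility and regeneration assumptions (A1), (A3) (cf.\ Lemmas \ref{irrlem}, \ref{hit0}, \ref{sigmalem}) and should be acknowledged rather than treated as automatic.
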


 Theorem \ref{recthm} is essentially due to
   Lamperti \cite{lamp1,lamp3} (in the case $|r| \neq 1$)
   and Menshikov {\em et al.} \cite{mai} (in the case $|r|=1$) under somewhat different conditions. We do not give a detailed proof of Theorem \ref{recthm} here, but sketch in Section \ref{sec:proofsthms}
   how these existing results may be adapted to our
   setting.

   \subsection{The maximum of an excursion}
   \label{sec:excmax}

   Let
 $M_n := \max_{\tau_{n-1} \leq t < \tau_n} X_t$,  the maximum attained by  $\EE_n$. The next
  result gives tail bounds, in the recurrent case, for the
 $(M_n)_{n \in \N}$, which are i.i.d.\ under our assumptions.

   \begin{theo}
   \label{lem4}
   Suppose that (A0)--(A3) hold.
   Suppose that $r > -1$ and  (\ref{moms}) holds with $p > \max \{ 2, 1+r \}$.
   Then for any $\eps >0$,  for all $x$ sufficiently large,
   \begin{equation}
   \label{mtails}
  x^{-1-r} (\log x)^{-\eps} \leq \Pr [ M_1 \geq x ] \leq x^{-1-r} (\log x)^{1+\eps}.\end{equation}
   In particular,
   $\Exp \left[ M_1^{1+r} \right] = \infty$ but, for any $\eps>0$,
 $\Exp \left[ M_1^{1+r - \eps} \right] < \infty$.     \end{theo}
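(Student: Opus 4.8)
The plan is to obtain the tail bounds on $M_1 = \max_{1 \le t < \tau_1} X_t$ by comparing the excursion of $X$ with that of a suitable one-dimensional diffusion-like object via a Lyapunov-function (martingale) argument. Since $M_1$ is the maximum of the first excursion, $\{ M_1 \ge x \}$ is the event that $X$, started from $0$, reaches level $x$ before returning to $0$. So the two-sided estimate \eqref{mtails} is really a statement about the probability that $X$ hits $[x,\infty)$ before hitting $0$, i.e.\ a ruin-type probability. Under (A2) the drift is $\approx c/X_t$ and the second moment $\approx s^2$, so the natural test function is a power $f(y) = y^{\gamma}$, whose increments satisfy, by Taylor expansion and \eqref{mu1}--\eqref{mu2},
\[
\Exp[ f(X_{t+1}) - f(X_t) \mid \F_t] = \gamma X_t^{\gamma-1}\Exp[\Delta_t\mid\F_t] + \binom{\gamma}{2} X_t^{\gamma-2}\Exp[\Delta_t^2\mid\F_t] + \cdots = \gamma X_t^{\gamma-2}\Bigl( c + \tfrac{s^2}{2}(\gamma-1) \Bigr) + (\text{lower order}).
\]
The leading bracket vanishes precisely when $\gamma = 1 + r$ (recall $r = -2c/s^2$), which is exactly the exponent appearing in \eqref{mtails}; this is the heuristic explanation for the $x^{-1-r}$ behaviour. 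To get rigorous one-sided bounds I would instead take $\gamma = 1+r \pm \eps$ (or include a logarithmic correction $f(y) = y^{1+r}\log^{\pm a} y$ to sharpen the powers of $\log$ to match the stated $(\log x)^{\mp\eps}$ and $(\log x)^{1+\eps}$ factors), making $f(X_t)$ into a supermartingale or submartingale on the relevant range of the state space. The moment hypothesis \eqref{moms} with $p > \max\{2, 1+r\}$ is what controls the remainder in the Taylor expansion (the neglected terms involve $\Exp[|\Delta_t|^{k}]$ for $k \ge 3$, or one truncates at order two with an error bounded using \eqref{moms}); this is why $p$ must exceed $1+r$.

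The key steps, in order, are as follows. First, fix a large threshold $x_0$ and consider the process stopped at $\sigma_{x_0} := \min\{t : X_t \le x_0\}$ and at $\rho_x := \min\{t : X_t \ge x\}$; on $(x_0, x)$ the Taylor estimate above holds with uniform control of the error by the $o(\cdot)$ terms in (A2) and by \eqref{moms}. Second, for the \emph{upper} bound on $\Pr[M_1 \ge x]$: choose $\gamma = 1 + r + \eps'$ with a logarithmic correction so that $g(X_{t \wedge \sigma_{x_0} \wedge \rho_x})$ is a bounded nonnegative supermartingale; optional stopping gives $g(x)\Pr[\rho_x < \sigma_{x_0}] \le g(X_0') = g(x_0)$ for a process started at some fixed level $x_0$, hence $\Pr[\rho_x < \sigma_{x_0}] \le C(x_0) x^{-1-r}\log^{1+\eps} x$; then one handles the (finitely many, by (A1)) steps needed to go from $0$ up past $x_0$ separately, absorbing the constant. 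Third, for the \emph{lower} bound: use the complementary choice $\gamma = 1 + r - \eps'$ so that $g(X_{t\wedge \cdots})$ is now a submartingale (or a supermartingale of the reciprocal-type function), and a reverse optional-stopping / Doob inequality argument gives $\Pr[\rho_x < \sigma_{x_0}] \ge c(x_0) x^{-1-r}\log^{-\eps} x$; here one additionally needs that from $0$ the process has a positive probability (bounded below, via (A1)) of reaching level $x_0$ before returning to $0$, which requires $r > -1$ so that the process is not transient-to-infinity in a way that trivialises the excursion but is still "pushed up" enough — more precisely, $r>-1$ guarantees the relevant hitting probability of $x_0$ from $0$ is positive and that $\gamma = 1+r-\eps' > 0$ so $y^\gamma$ is genuinely increasing. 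Finally, the moment conclusions $\Exp[M_1^{1+r}] = \infty$ and $\Exp[M_1^{1+r-\eps}] < \infty$ follow immediately from \eqref{mtails} by the standard layer-cake formula $\Exp[M_1^{q}] = q\int_0^\infty x^{q-1}\Pr[M_1 \ge x]\,\ud x$ together with the fact that $\int^\infty x^{-1}\log^{1+\eps}x\,\ud x = \infty$ while $\int^\infty x^{-1-\eps'}\log^{1+\eps}x\,\ud x < \infty$.

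The main obstacle I anticipate is getting the \emph{logarithmic} factors right — i.e.\ passing from the crude polynomial bounds (which only give $x^{-1-r\mp\eps}$) to the sharper $x^{-1-r}(\log x)^{\mp\eps}$ form stated in \eqref{mtails}. This requires a more delicate choice of Lyapunov function incorporating iterated-logarithm-type corrections (functions like $y^{1+r}(\log y)^{b}(\log\log y)^{\cdots}$), together with careful bookkeeping of the $o(X_t^{-1}\log^{-1}X_t)$ error in \eqref{mu1} against the second-order term $\tfrac{s^2}{2}(\gamma-1)X_t^{\gamma-2}$ when $\gamma$ is within $O(1/\log x)$ of $1+r$: one effectively needs the drift correction to be of exact order $(\log y)^{-1}$, which is why \eqref{mu1} demands an $o(X_t^{-1}\log^{-1}X_t)$ (rather than merely $o(X_t^{-1})$) error term. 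A secondary, more routine, technicality is the transition between the "bulk" regime $(x_0,x)$ where the diffusion approximation is valid and the "boundary" regime near $0$ where one relies only on (A1); this is handled by the strong Markov / regenerative property (A3) and contributes only multiplicative constants, which are harmless for \eqref{mtails} since the bound is claimed only "for all $x$ sufficiently large" with the $\eps$ absorbing any constant.
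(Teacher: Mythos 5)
Your overall strategy is the paper's: identify $\{M_1 \geq x\}$ with the event of reaching $[x,\infty)$ before returning to $0$, and estimate this hitting probability by optional stopping applied to Lyapunov functions of the form $(\re+y)^{1+r}\log^{\nu}(\re+y)$ (the paper's Lemma \ref{semilem}), with the boundary region handled via irreducibility. However, your upper bound has a genuine gap. You bound only the probability of a \emph{single} ascent from a fixed level $x_0$ to $x$ and then claim the passage from $0$ up past $x_0$ can be absorbed into a constant. But within one excursion the process may fall back below $x_0$ \emph{without hitting $0$} and climb again, an unbounded number of times, so $\Pr[M_1\geq x]$ is a union over all such attempts and is not controlled by one application of optional stopping. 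The paper closes exactly this gap: Lemma \ref{hit0} shows that each time the process is below $x_0$ it has a uniformly positive probability of ending the excursion before rising above $x_1$, so the number of attempts is stochastically geometric, and a union bound over roughly $k\log x$ attempts produces the bound (this is also precisely where the extra factor making $(\log x)^{1+\eps}$, rather than $(\log x)^{\eps}$, comes from). Moreover, since only the moment bound (\ref{moms}) is assumed, jumps are unbounded, so on re-entering the high region the value $X_{\alpha_n}$ overshoots $x_1$; one needs a uniform bound on $\Exp[f_{1+r,\nu}(X_{\alpha_n})]$, which is the content of the paper's Lemma \ref{lem55} (resting on the exponential exit-time bound of Lemma \ref{sigmalem}). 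Your sketch addresses neither the repeated-attempt structure nor the overshoot.

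Two further points. For the lower bound, optional stopping for the submartingale requires uniform integrability, which fails for the unbounded function $f_{1+r,\nu}$ when jumps are unbounded; the paper works with the truncation $h_x=\min\{f_{1+r,\nu},f_{1+r,\nu}(2x)\}$ and re-derives the submartingale inequality for $h_x$ using the jump-tail estimate (Lemma \ref{deltail}), a step your proposal omits. Also, the positive probability of reaching the intermediate level within the first excursion comes from (A1) and (A3) (the paper's Lemma \ref{yhit}), not from $r>-1$; the condition $r>-1$ enters only through the sign of the Lyapunov drift and recurrence. Finally, a small but real slip in the moment conclusions: you justify $\Exp[M_1^{1+r}]=\infty$ by the divergence of $\int^\infty x^{-1}\log^{1+\eps}x\,\ud x$, i.e.\ by plugging the \emph{upper} tail bound into the layer-cake formula, which proves nothing; divergence must come from the lower bound in (\ref{mtails}) with some $\eps<1$, via $\int^\infty x^{-1}(\log x)^{-\eps}\,\ud x=\infty$. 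The finiteness of $\Exp[M_1^{1+r-\eps}]$ is argued correctly.
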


\begin{rmks}
(a) Symmetric simple random walk on the half-line with reflection at $0$ (and, indeed, any
 of a host of more
   general zero-drift models) has $r=0$, and is thus right on the boundary
   of having a finite expectation for $M_1$.
   (b)
In the tail bound (\ref{mtails}) and similar
results in the sequel, the polynomial term is sharp but we do not
necessarily strive for the best possible logarithmic term. Our results are all sharp enough, however, to classify completely
which moments do or do not exist for the random variable in question.
\end{rmks}

   \subsection{The duration of an excursion}
   \label{sec:exctime}

   The following result is a sharpening in our context of   \cite[Propositions 1 and 2]{aim}, which themselves extended
   work of Lamperti \cite{lamp3}.

   \begin{theo}
   \label{etalem}
 Suppose that (A0)--(A3) hold. Suppose that $r > -1$ and (\ref{moms}) holds
with $p > \max \{2,1+r\}$.
 Then for any $\eps >0$, for all $x$ sufficiently large,
 \begin{equation}
 \label{etatails}
x^{-\frac{1+r}{2}} (\log x)^{-\eps} \leq \Pr [ \eta_1 \geq x ] \leq x^{-\frac{1+r}{2}} (\log x)^{2+r+\eps} .
  \end{equation}
 In particular,
 $\Exp \big[ \eta_1^{\frac{1+r}{2}} \big] = \infty$ but, for any $\eps >0$, $\Exp \big[ \eta_1^{\frac{1+r}{2}-\eps} \big] < \infty$.
   \end{theo}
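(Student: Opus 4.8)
The governing heuristic is diffusive: as the increments of $X$ have conditional variance $\approx s^2$ and conditional mean $\approx c X_t^{-1}$, an excursion that reaches a height of order $y$ should last of order $y^2$ steps. The plan is therefore to transfer the maximum tail bounds \eqref{mtails} of Theorem \ref{lem4} to $\eta_1$, converting the spatial scale $y$ into the temporal scale $x \asymp y^2$, the quantitative work being done by two ``time spent in a strip'' estimates. By Theorem \ref{recthm} the hypothesis $r > -1$ forces $X$ to be recurrent, so that $\tau_1 < \infty$ a.s.; fix $\eps > 0$.

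For the \emph{upper bound}, observe that $\{\eta_1 \ge x\} \subseteq \{M_1 \ge y\} \cup \{X_t \in (0,y) \text{ for all integer } t \in [2,x]\}$ for any level $y$, since on $\{\eta_1 \ge x\}$ the excursion neither returns to $0$ nor exceeds $M_1$ over $[2,x]$. By Theorem \ref{lem4} the first event has probability at most $y^{-1-r}(\log y)^{1+\eps}$. The second is controlled by a confinement estimate: for some $c_1 > 0$, all large $y$ and all $n$, the probability that $X$ stays in $(0,y)$ for $n$ consecutive steps is at most $\re^{-c_1 n/y^2}$. This is standard Lamperti-type technology (cf.\ \cite{aim}): with $f(z) = z^\nu$ for a fixed $\nu \in (0,1+r)$, \eqref{mu1}--\eqref{mu2} and \eqref{moms} give $\Exp[f(X_{t+1}) - f(X_t) \mid \F_t] \le -c_2 y^{\nu-2}$ for $X_t \in [K_0,y]$ with $K_0$ fixed large, while \eqref{irred} gives geometric tails for the sojourns of $X$ in the bounded region $(0,K_0)$; the usual block argument then yields the exponential decay. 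Choosing $y = \sqrt{x}/\log x$ makes $n/y^2 = \log^2 x$, so the confinement term is $o(x^{-K})$ for every $K$, while $y^{-1-r}(\log y)^{1+\eps} \le x^{-(1+r)/2}(\log x)^{2+r+\eps}$ for $x$ large; summing and relabelling $\eps$ gives the upper bound in \eqref{etatails}.

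For the \emph{lower bound}, fix a small $\theta > 0$, put $y = \sqrt{x/\theta}$ (so $\lceil\theta y^2\rceil \ge x$), and let $\sigma := \min\{t \ge \tau_0 : X_t \ge y\}$, so that $\{\sigma < \tau_1\} = \{M_1 \ge y\}$. The crux is an anti-confinement estimate: on $\{\sigma < \tau_1\}$, with $\F_\sigma$-conditional probability at least $\tfrac12$ one has $X_{\sigma+j} \in (X_\sigma/4, 4X_\sigma)$ for all $j = 0,1,\dots,\lceil\theta y^2\rceil$, provided $\theta$ is small enough and $y$ large enough. Here using the \emph{adaptive} strip $[X_\sigma/4, 4X_\sigma]$ rather than a fixed one makes the overshoot $X_\sigma - y$ at the first passage to level $y$ harmless, as the estimates then only use that $X_\sigma \ge y$ is large. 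To prove it, let $\rho$ be the exit time of $(X_{\sigma+j})_j$ from the strip and write $X_{\sigma+(j\wedge\rho)} - X_\sigma = D_j + N_j$ with $D$ predictable and $N$ a martingale: by \eqref{mu1}, $|D_j| \le C\theta X_\sigma \le X_\sigma/8$ for $j \le \lceil\theta y^2\rceil$; by \eqref{mu2} and Doob's inequality, $\max_{j \le \lceil\theta y^2\rceil}|N_j| \le X_\sigma/8$ with $\F_\sigma$-conditional probability $\ge 1 - C'\theta$; and by \eqref{moms}, with conditional probability $\ge 1 - C''\theta X_\sigma^{-\delta}$ no increment before time $\rho$ exceeds $X_\sigma/8$. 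On the intersection of the last two events $|X_{\sigma+(j\wedge\rho)} - X_\sigma| \le X_\sigma/4$ for all such $j$, so the stopped path never reaches the strip boundary and $\rho > \lceil\theta y^2\rceil$, as claimed. Granting it, $X$ stays positive for $\lceil\theta y^2\rceil \ge x$ steps after $\sigma \ge \tau_0 = 1$, so $\eta_1 \ge x$; hence $\Pr[\eta_1 \ge x] \ge \tfrac12 \Pr[M_1 \ge y] \ge \tfrac12 y^{-1-r}(\log y)^{-\eps}$ for $x$ large by Theorem \ref{lem4}, which with $y = \sqrt{x/\theta}$ is $\ge x^{-(1+r)/2}(\log x)^{-2\eps}$ for $x$ large, giving the lower bound in \eqref{etatails} after relabelling $\eps$. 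Finally \eqref{etatails} yields the moment statement by integration: $\int_1^\infty \Pr[\eta_1 \ge u]\,u^{(1+r)/2 - 1}\,\ud u$ diverges (its integrand being $\ge u^{-1}(\log u)^{-\eps}$ with $\eps < 1$), whereas $\int_1^\infty \Pr[\eta_1 \ge u]\,u^{(1+r)/2 - \eps' - 1}\,\ud u < \infty$ for every $\eps' > 0$.

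The main obstacle is the anti-confinement estimate. Because $X$ need not be Markov, it cannot be restarted afresh from height $y$: the martingale and maximal-inequality argument must be run conditionally on $\F_\sigma$ using only the moment hypotheses \eqref{mu1}, \eqref{mu2}, \eqref{moms}; one must exclude the possibility that a single large increment (allowed by \eqref{moms}) leaves the strip; and the constants must be uniform in the value $X_\sigma \ge y$. The adaptive strip is exactly the device that achieves all of this while simultaneously neutralising the first-passage overshoot.
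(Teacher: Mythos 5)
Your proposal is correct, and it is half the paper's route and half a different one. For the lower bound you do essentially what the paper does: Theorem \ref{lem4}'s lower tail for $M_1$ is transferred to $\eta_1$ via a ``once at height $y$, the process needs $\gtrsim y^2$ steps to come down'' estimate. The paper packages this as Lemma \ref{lowbound}, run with the Lyapunov function $W_t=(y-X_t)^2\1\{X_t<y\}$ and the maximal inequality of \cite[Lemma 3.1]{mvw} (note that $W_{\kappa_1}=0$ there plays exactly the role of your adaptive strip in neutralising the overshoot); your Doob-decomposition-plus-Doob-inequality version of the same fact is a sound alternative implementation. For the upper bound, however, you genuinely diverge: the paper applies the Aspandiiarov--Iasnogorodski passage-time machinery (Lemma \ref{tail2}, i.e.\ Theorem $2'$ of \cite{ai2}, with the Lyapunov function $f_{1+r,\nu}$) to the descent time from level $B$ to $[0,A]$, and then patches excursions near the origin by a geometric cycle argument using Lemmas \ref{hit0}, \ref{sigmalem}, and \ref{lem55}; you instead split $\{\eta_1\ge x\}$ according to whether $M_1\ge y$ with $y=\sqrt{x}/\log x$, bound the first piece by Theorem \ref{lem4} and the second by an exponential confinement estimate $\Pr[X\ \mbox{stays in}\ (0,y)\ \mbox{for}\ n\ \mbox{steps}]\le \re^{-c_1 n/y^2}$, which at $n\asymp x$ is superpolynomially small. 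Your route avoids citing \cite{ai2} and, pleasingly, reproduces the same logarithmic exponent $2+r+\eps$; the price is that the confinement estimate must itself be proved (negative-drift supermartingale on $[K_0,y]$ plus an irreducibility block argument below $K_0$), which is comparable in effort to the paper's cycle argument and uses the same ingredients (Lemma \ref{semilem}-type drift computations and the uniform irreducibility constants over the finite set $\SS_{K_0}$). Two small points of care in your sketch: in the confinement step you should take $\nu\in(0,\min\{2,1+r\})$, not merely $\nu\in(0,1+r)$, since the bound $X_t^{\nu-2}\ge y^{\nu-2}$ on $[K_0,y]$ (and hence the $O(y^2)$ exit-time bound) needs $\nu\le 2$; and in the lower bound the ``no single large increment'' event is redundant, as the bounds on the predictable part and on the stopped martingale maximum already force $\rho>\lceil\theta y^2\rceil$.
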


  The existence of moments for $\eta_1$ part of Theorem \ref{etalem} is based
   on general results of \cite{ai2}. The non-existence of moments result is new
   in the generality given here; under more restrictive assumptions (including uniformly bounded increments for $X_t$) it can be derived from \cite[Corollary 1]{ai1}.    Our proof of the non-existence result is
   based on the intuitively  appealing Lemma \ref{lowbound} below.
   Lamperti \cite{lamp3} was the first to systematically study the problem of the existence or non-existence
   of moments $\Exp [ \eta_1^q]$: his results covered only integer $q$.
   Subsequently Aspandiiarov {\em et al.} extended Lamperti's results to all $q>0$  (see the Appendix of \cite{aim}),
   but neither \cite{lamp3} nor the results of \cite{aim} determine whether the boundary case $\Exp [ \eta^{(1+r)/2} ]$ is finite or infinite;
   as mentioned above, results of \cite{ai1} can be used to settle the boundary case, but under  more restrictive conditions on the increments than we
   use in Theorem \ref{etalem}.
 (The results of \cite{lamp3,aim} related to Theorem \ref{etalem} are stated in the Markovian setting, but their methods, similar to ours, work   more generally.)

\subsection{Number of excursions}
\label{sec:excnum}

Let $N_t$ denote the number of excursions up until time $t$, i.e.,
$N_t := \max \{ n \in \N : \tau_n \leq t \}$.

\begin{theo}
\label{numberthm}
Suppose that (A0)--(A3) hold.
\begin{itemize}
\item[(i)] Suppose that $-1 < r \leq 1$ and   (\ref{moms}) holds
 with $p > 2$. Then for any $\eps >0$,
a.s., for all but finitely many $t$,
\begin{equation}
\label{nbounds}
 t^{\frac{1+r}{2}} (\log t)^{-3 -r -\eps} \leq N_t \leq  t^{\frac{1+r}{2}} (\log t)^{1+\eps} .\end{equation}
\item[(ii)] Suppose that $r>1$  and   (\ref{moms}) holds
 with $p > 1+r$. Then a.s., as $t \to \infty$,
$t^{-1} N_t \to \frac{1}{\Exp [ \eta_1 ]} \in (0,\infty)$.
\end{itemize}
\end{theo}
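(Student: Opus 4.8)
The plan is to deduce Theorem \ref{numberthm} from the tail asymptotics of the excursion durations (Theorem \ref{etalem}) by a renewal-type argument, treating the null-recurrent case $-1 < r \le 1$ and the positive-recurrent case $r>1$ separately. In both cases the key observation is that $N_t \le n$ if and only if $\tau_n > t$, so that controlling $N_t$ amounts to controlling the partial sums $\tau_n = 1 + \sum_{k=1}^n \eta_k$ of the i.i.d.\ (under (A3)(a)) durations $\eta_1, \eta_2, \ldots$.

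For part (ii), $r>1$: here $p>1+r$ forces $\frac{1+r}{2} > 1$, so by Theorem \ref{etalem} we have $\Exp[\eta_1] < \infty$ (indeed some moment of order $>1$ is finite). The strong law of large numbers gives $n^{-1}\tau_n \to \Exp[\eta_1]$ a.s., and the standard inversion argument for renewal processes (using monotonicity of $n \mapsto \tau_n$ and sandwiching $\tau_{N_t} \le t < \tau_{N_t+1}$) yields $t^{-1} N_t \to 1/\Exp[\eta_1]$ a.s. One must also check that $N_t \to \infty$ a.s., which follows from recurrence (Proposition \ref{dichot}(ii)) and $\Pr[\eta_1 < \infty]=1$. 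This part is routine.

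For part (i), $-1 < r \le 1$: now $\frac{1+r}{2} \le 1$, so $\Exp[\eta_1] = \infty$ and we are in the realm of sums of i.i.d.\ nonnegative random variables with heavy (regularly-varying-up-to-logs) tails. The upper bound on $N_t$ comes from a lower bound on $\tau_n$: since $\Pr[\eta_1 \ge x] \ge x^{-(1+r)/2}(\log x)^{-\eps}$, a Borel--Cantelli argument shows that infinitely often $\eta_k$ is of order at least $k^{2/(1+r)}(\log k)^{-\text{const}}$, hence $\tau_n$ is at least of order $n^{2/(1+r)}(\log n)^{-\text{const}}$ along a subsequence; combined with monotonicity of $\tau_n$ this forces $N_t \le t^{(1+r)/2}(\log t)^{1+\eps}$ eventually. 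The lower bound on $N_t$ comes from an upper bound on $\tau_n = \sum_{k\le n}\eta_k$; here the tail upper bound $\Pr[\eta_1 \ge x] \le x^{-(1+r)/2}(\log x)^{2+r+\eps}$ is used to truncate the $\eta_k$ at level $n^{2/(1+r)}(\log n)^{\text{const}}$, estimate the expected truncated sum, and apply Markov's inequality together with Borel--Cantelli along a geometric subsequence $n = 2^j$ (or $t = 2^j$), then fill in between consecutive subsequence points using monotonicity. The logarithmic exponents in (\ref{nbounds}) will be whatever these truncation estimates produce after optimization; as the Remarks note, we do not strive for the sharpest log powers.

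The main obstacle I anticipate is the lower bound on $N_t$ in part (i): the partial sums $\tau_n$ of heavy-tailed variables are dominated by their largest term, so one cannot simply use $\Exp[\tau_n]$ (which is infinite). The truncation level must be chosen carefully so that (a) the contribution of the discarded large values is negligible with high probability — controlled via the tail upper bound and Borel--Cantelli — and (b) the truncated mean, which behaves like $n \cdot (\text{truncation level})^{1-(1+r)/2}$ up to logs, matches the target $n^{2/(1+r)}$ up to the logarithmic slack we are allowing. Keeping the two competing log factors consistent, and making the geometric-subsequence interpolation work cleanly with monotonicity of both $t \mapsto N_t$ and $n \mapsto \tau_n$, is where the care is needed; the rest is standard renewal-theoretic bookkeeping.
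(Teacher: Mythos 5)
Your overall route is the same as the paper's: use the regenerative structure (A3) to view $\tau_n=1+\sum_{k\le n}\eta_k$ as a sum of i.i.d.\ durations, import the tail bounds of Theorem \ref{etalem}, obtain two-sided a.s.\ bounds on $\tau_n$, and invert via $\{N_t\ge n\}=\{\tau_n\le t\}$ (this is the paper's Lemma \ref{etabounds} followed by the short inversion in the proof of Theorem \ref{numberthm}). Your part (ii) (SLLN plus the renewal sandwich $\tau_{N_t}\le t<\tau_{N_t+1}$, with $N_t\to\infty$ from recurrence) matches the paper, and your truncation--Markov--Borel--Cantelli argument for the upper bound on $\tau_n$ (hence the lower bound on $N_t$) is a sound, hands-on substitute for the paper's appeal to a Feller/Marcinkiewicz--Zygmund-type strong law (Lemma \ref{mz}(i)); it even handles the case $r=1$ transparently. (Minor slip: in part (ii) it is $r>1$, not $p>1+r$, that makes $\frac{1+r}{2}>1$.)

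The genuine gap is in your upper bound on $N_t$ in part (i). The second Borel--Cantelli lemma gives only that $\eta_k\ge k^{2/(1+r)}(\log k)^{-c}$ for \emph{infinitely many} $k$, hence $\tau_n\ge n^{2/(1+r)}(\log n)^{-c}$ along a random subsequence; your claim that this ``combined with monotonicity of $\tau_n$ forces $N_t\le t^{(1+r)/2}(\log t)^{1+\eps}$ eventually'' does not follow. To get the bound for \emph{all} large $t$ you need, for every large $t$, a good index $k\le t^{(1+r)/2}(\log t)^{1+\eps}$ with $\tau_k>t$, i.e.\ a good index inside a window of indices whose endpoints agree up to a power of $\log t$; equivalently you need $\tau_n\ge n^{2/(1+r)}(\log n)^{-c}$ for all but finitely many $n$. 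The bare ``i.o.''\ conclusion gives no control on where the large excursions occur, and between two consecutive good indices monotonicity only propagates the (possibly polynomially too small) bound attached to the earlier one, so the implication fails as stated. The missing ingredient is an eventual lower bound on the running maximum: bound $\Pr[\max_{k\le n}\eta_k< n^{2/(1+r)}(\log n)^{-c}]\le\bigl(1-\Pr[\eta_1\ge n^{2/(1+r)}(\log n)^{-c}]\bigr)^n\le\exp\{-c'(\log n)^{a}\}$ with $a>1$ for a suitable choice of $c$, and apply the \emph{first} Borel--Cantelli lemma (over all $n$, or over dyadic blocks plus interpolation) to conclude $\tau_n\ge\max_{k\le n}\eta_k\ge n^{2/(1+r)}(\log n)^{-c}$ for all but finitely many $n$. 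This is exactly the paper's Lemma \ref{lem7}(ii), fed into Lemma \ref{mz}(ii); with that replacement your proof goes through.
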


 \subsection{Occupation times and stationary distribution}
 \label{sec:occupation}

 In this section $\Exp [ \eta_1] < \infty$.
 Define for $t\in \N$ and $x \in \SS$ the occupation times
 $L_t (x) := \sum_{s=1}^t \1 \{ X_s = x \}$.
 Also define the occupation times during the $n$th excursion by
 \begin{equation}
 \label{excocc}
  \ell_n (x) := \sum_{t=\tau_{n-1}}^{\tau_n-1} \1 \{X_t = x \} .\end{equation}
 The next result is essentially a consequence of   `ergodic
 theory' for regenerative processes. The limiting distribution $\pi$ that appears
 in Theorem \ref{lemstat}
 is the usual (unique) stationary distribution if $X$ is an irreducible positive-recurrent
 Markov process.

 \begin{theo}
 \label{lemstat}
 Suppose that (A0)--(A3) hold,   $r >1$,
  and (\ref{moms}) holds
with $p >1+r$.
 Then setting
 \begin{equation}
 \label{pidef} \pi (x) := \frac{\Exp [ \ell_1 (x) ]}{\Exp [ \eta_1 ]} , \end{equation}
 we have that $\pi (x) >0$, $\sum_{x \in\SS} \pi(x) =1$, and,
 for any $x \in \SS$, $t^{-1} L_t (x) \to \pi (x)$ a.s.\ and in $L^q$ for any $q \geq 1$. Finally, if, in addition, the distribution of $\eta_1$ is not
 supported on $k \N$ for any $k >1$, we have that, for any $x \in \SS$,
$\lim_{t \to \infty} \Pr [ X_t =x] = \pi (x)$.
 \end{theo}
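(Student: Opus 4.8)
The plan is to exploit the regenerative structure (A3) and reduce everything to the classical renewal-reward and ergodic theory for regenerative processes, as in \cite[Chapter VI]{asmussen}. The key point is that, since $r>1$, Theorem \ref{etalem} (together with $p>1+r$) gives $\Exp[\eta_1] < \infty$, and moreover, because $\eta_1^{(1+r)/2}$ has finite moments up to order $(1+r)/2 - \eps$ with $(1+r)/2 > 1$, we in fact have $\Exp[\eta_1^{1+\gamma}] < \infty$ for some $\gamma > 0$; this extra integrability will be what powers the $L^q$ convergence. First I would verify $\pi(x) > 0$ for each $x \in \SS$: this follows from irreducibility (A1), since starting from $0$ the process reaches $x$ within a bounded number of steps with positive probability, forcing $\Exp[\ell_1(x)] > 0$. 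The normalization $\sum_{x \in \SS} \pi(x) = 1$ is immediate from $\sum_{x} \ell_1(x) = \eta_1$ and monotone convergence, giving $\sum_x \Exp[\ell_1(x)] = \Exp[\eta_1]$.

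Next, for the a.s.\ convergence $t^{-1} L_t(x) \to \pi(x)$, I would write $L_{\tau_n}(x) = \sum_{m=1}^n \ell_m(x)$, which is a sum of i.i.d.\ terms (by (A3)(a), once we have established recurrence so that $N = \infty$ a.s.\ --- this is part (ii) of Proposition \ref{dichot} combined with the recurrence half of Theorem \ref{recthm}), so the strong law of large numbers gives $n^{-1} L_{\tau_n}(x) \to \Exp[\ell_1(x)]$ a.s. Similarly $n^{-1}\tau_n \to \Exp[\eta_1]$ a.s. By Theorem \ref{numberthm}(ii), $t^{-1} N_t \to 1/\Exp[\eta_1]$ a.s., so in particular $N_t \to \infty$; sandwiching $L_{\tau_{N_t}}(x) \leq L_t(x) \leq L_{\tau_{N_t + 1}}(x)$ and writing $t^{-1} L_t(x) = (N_t / t) \cdot (L_{\tau_{N_t}}(x)/N_t)$ up to the negligible boundary term, the a.s.\ limit is $(1/\Exp[\eta_1]) \cdot \Exp[\ell_1(x)] = \pi(x)$. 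For the $L^q$ convergence, since $0 \leq t^{-1} L_t(x) \leq 1$, the sequence $(t^{-1}L_t(x))^q$ is bounded and converges a.s., so dominated convergence gives $L^q$ convergence for every $q \geq 1$ at once; this is actually the easy part.

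The last statement --- $\lim_{t\to\infty} \Pr[X_t = x] = \pi(x)$ under the aperiodicity hypothesis that $\eta_1$ is not supported on $k\N$ for $k>1$ --- is the classical regenerative CLT/renewal theorem: for an aperiodic regenerative process with finite mean cycle length, the time-$t$ distribution converges to the stationary law given by the cycle-average formula \eqref{pidef}; see \cite[Chapter VI, Theorem 1.2]{asmussen} or the analogous statement there. I would simply cite this, checking that $\Exp[\eta_1] < \infty$ (which we have) and aperiodicity are the only hypotheses needed, and that the quantity it produces agrees with our $\pi(x) = \Exp[\ell_1(x)]/\Exp[\eta_1]$. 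The main obstacle, such as it is, is not any single deep step but rather the bookkeeping required to pass cleanly from the excursion-indexed quantities to the time-indexed ones --- in particular making the boundary/overshoot term $L_{\tau_{N_t+1}}(x) - L_{\tau_{N_t}}(x) = \ell_{N_t+1}(x) \leq \eta_{N_t+1}$ negligible after dividing by $t$. This follows because $n^{-1}\eta_n \to 0$ a.s.\ (a consequence of $\Exp[\eta_1] < \infty$ and Borel--Cantelli, or simply of $\tau_n/n$ converging), and $N_t \asymp t$; but it needs to be stated carefully. Everything else is a direct application of renewal-reward and the ergodic theory for regenerative processes, and the needed integrability inputs are exactly what Theorems \ref{etalem} and \ref{numberthm} supply.
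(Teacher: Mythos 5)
Your proposal follows essentially the same route as the paper's proof: finiteness of $\Exp[\eta_1]$ from Theorem \ref{etalem}, the normalisation via $\sum_x \ell_1(x)=\eta_1$, the sandwich $\sum_{n=1}^{N_t}\ell_n(x)\leq L_t(x)\leq\sum_{n=1}^{N_t+1}\ell_n(x)$ combined with the strong law and Theorem \ref{numberthm}(ii), bounded convergence for the $L^q$ statement, and the citation of Asmussen's regenerative-process theorem (Corollary VI.1.5 there) for $\Pr[X_t=x]\to\pi(x)$ under aperiodicity; the remark about $\Exp[\eta_1^{1+\gamma}]<\infty$ is not needed, as you yourself note.

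The one step where your stated reasoning does not quite go through as written is $\pi(x)>0$: assumption (A1) gives a visit to $x$ within $m(0,x)$ steps of a visit to $0$ with probability at least $\varphi(0,x)$, but that visit need not occur \emph{before the first return to} $0$, so it does not directly force $\Exp[\ell_1(x)]>0$. The fix is exactly the paper's argument (or Lemma \ref{yhit}): if $\ell_1(x)=0$ a.s., then by the i.i.d.\ excursion structure of (A3) no excursion ever visits $x$, i.e.\ $L_t(x)=0$ a.s.\ for all $t$, contradicting Lemma \ref{irrlem}; hence $\Pr[\ell_1(x)>0]>0$ and $\Exp[\ell_1(x)]>0$. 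With that one-sentence repair, your argument coincides with the paper's.
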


 \begin{rmk}
 In the case of a Markov process with uniformly bounded increments,
 under assumptions otherwise similar to ours, results of Menshikov and Popov \cite{mp} show
 that, for $r>1$, $\pi (x) = x^{-r+o(1)}$ as $x \to \infty$. The asymptotics of $\pi(x)$
 are not of direct interest to the topic of the present paper, and so we do not discuss this further here,
 but our methods can be used to
 extend such results to the present more general setting.
 \end{rmk}

\subsection{Running maximum process}
\label{sec:max}

 In this section we consider the process of maxima of $X$, i.e., $\max_{1 \leq s \leq t } X_s$.

\begin{theo}
\label{xbounds}
Suppose that (A0)--(A3) hold.
\begin{itemize}
\item[(i)] Suppose that $-1 < r \leq 1$ and   (\ref{moms}) holds
with $p > 2$.
Then for any $\eps >0$,
a.s., for all but finitely many $t$,
\[ t^{\frac 12} (\log t)^{-\frac{4+r}{1+r} - \eps} \leq \max_{1 \leq s \leq t } X_s
\leq t^{ \frac 12} (\log t)^{\frac{3}{1+r} + \eps }  .\]
\item[(ii)] Suppose that $r>1$  and   (\ref{moms}) holds
with $p > 1+r$. Then for any $\eps >0$,
a.s., for all but finitely many $t$,
\[ t^{ \frac{1}{1+r} } (\log t)^{- \frac{1}{1+r}  - \eps} \leq \max_{1 \leq s \leq t } X_s
 \leq  t^{ \frac{1}{1+r} } (\log t)^{ \frac{2}{1+r}  + \eps} .\]
\end{itemize}
\end{theo}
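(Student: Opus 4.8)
The plan is to deduce the running-maximum asymptotics from the excursion-maximum tails of Theorem \ref{lem4} together with control on the number of excursions from Theorem \ref{numberthm}. The basic identity is
\[
\max_{1 \leq s \leq t} X_s = \max_{1 \leq n \leq N_t} M_n + (\textrm{boundary correction from the current incomplete excursion}),
\]
and since $M_1, M_2, \ldots$ are i.i.d.\ with $\Pr[M_1 \geq x] = x^{-(1+r)}(\log x)^{\pm\eps}$, the running maximum behaves like the maximum of roughly $N_t$ i.i.d.\ heavy-tailed variables. The incomplete excursion at time $t$ contributes a term no larger than the maximum of an independent copy of $M_1$ conditioned to have duration at least $t - \tau_{N_t}$, which is negligible on the relevant scale; I would dispose of it first with a crude union bound.

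First I would record the standard Borel--Cantelli estimates for the maximum of i.i.d.\ variables with the tail in \eqref{mtails}. For the upper bound: by a union bound, $\Pr[\max_{1 \leq n \leq m} M_n \geq y] \leq m\, y^{-(1+r)}(\log y)^{1+\eps}$, so taking $y = y(m)$ with $y^{1+r} = m (\log m)^{2+\eps'}$, i.e.\ $y(m) = m^{1/(1+r)}(\log m)^{(2+\eps')/(1+r)}$, makes this summable along a geometric subsequence $m_k = 2^k$; monotonicity of the running maximum in $m$ fills the gaps. For the lower bound one uses the second moment / independence: $\Pr[\max_{1\leq n\leq m} M_n < y] = (1 - \Pr[M_1 \geq y])^m \leq \exp(-m\Pr[M_1 \geq y])$, which is summable along $m_k = 2^k$ once $m\, y^{-(1+r)}(\log y)^{-\eps} \to \infty$ fast enough, giving a matching lower bound $y(m) = m^{1/(1+r)}(\log m)^{-(1+\eps')/(1+r)}$ after keeping track of the log powers carefully. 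So, a.s.\ for all large $m$,
\[
m^{1/(1+r)} (\log m)^{-\frac{1+\eps}{1+r}} \leq \max_{1 \leq n \leq m} M_n \leq m^{1/(1+r)} (\log m)^{\frac{2+\eps}{1+r}} .
\]

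Then I would substitute $m = N_t$ and use Theorem \ref{numberthm} to convert from number of excursions to time. In case (ii), $r > 1$: there $N_t \sim t / \Exp[\eta_1]$ a.s., so $m^{1/(1+r)} = t^{1/(1+r)} (1+o(1))$ and $\log m = \log t + O(1)$, and plugging in gives exactly the claimed bounds $t^{1/(1+r)}(\log t)^{\pm}$ with the stated log exponents $-\frac{1}{1+r}-\eps$ and $\frac{2}{1+r}+\eps$ (after absorbing the $\Exp[\eta_1]$ constant and the $\eps'$'s). In case (i), $-1 < r \leq 1$: here \eqref{nbounds} gives $N_t = t^{(1+r)/2} (\log t)^{\theta_t}$ with $\theta_t \in [-3-r-\eps, 1+\eps]$, so $N_t^{1/(1+r)} = t^{1/2} (\log t)^{\theta_t/(1+r)}$ and $\log N_t = \frac{1+r}{2}\log t \,(1+o(1))$, hence $\log\log N_t = \log\log t + O(1)$; substituting into the i.i.d.-maximum bounds, the dominant $t^{1/2}$ emerges and the log exponents combine to give, on the upper side, $\frac{1}{1+r}\cdot(1+\eps) + \frac{2+\eps}{1+r} = \frac{3+\eps'}{1+r}$, i.e.\ the stated $\frac{3}{1+r}+\eps$, and on the lower side $\frac{1}{1+r}\cdot(-3-r-\eps) + \frac{-(1+\eps)}{1+r} = \frac{-(4+r)-\eps'}{1+r}$, i.e.\ the stated $-\frac{4+r}{1+r}-\eps$.

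The main obstacle is bookkeeping the logarithmic corrections through the composition of two separate $(\log)^{\pm\eps}$-type estimates (the i.i.d.-maximum bound and the $N_t$ bound) so that the exponents come out exactly as stated, rather than with an avoidable loss; since the theorem only claims sharpness of the polynomial term and allows an arbitrary $\eps$ in the log power, this is manageable, but one must be careful that the "for all but finitely many $t$" statements for $N_t$ and for $\max_n M_n$ hold on a common almost-sure event, which is fine as there are only countably many. A secondary technical point is handling the final incomplete excursion: one must check that the time $t - \tau_{N_t}$ spent in it, and hence the extra maximum it can contribute, is $o$ of the main term; this follows from Theorem \ref{etalem} (excursion durations are stochastically dominated in a way that makes a single excursion straddling time $t$ of length at most $t$, with maximum at most an independent $M$-type variable whose contribution is controlled by the same union bound used above, applied with $m=1$ at scale $t$ — negligible since $t^{1/(1+r)} \ll t^{1/2}$ when $r<1$ and the constant in case (ii) is harmless).
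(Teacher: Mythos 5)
Your main line is exactly the paper's own argument: combine the excursion-maximum tails of Theorem \ref{lem4} with a Borel--Cantelli estimate for maxima of i.i.d.\ heavy-tailed variables (the paper's Lemma \ref{lem7}, which yields its Lemma \ref{lem6} for $\max_{1\le i\le n} M_i$), and then convert the excursion count into time via Theorem \ref{numberthm}; your bookkeeping of the logarithmic exponents in both regimes matches the statement.

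The one step that, as written, would fail is your treatment of the final incomplete excursion in case (i). You assert it is negligible ``since $t^{1/(1+r)} \ll t^{1/2}$ when $r<1$'', but this inequality is backwards: for $-1<r<1$ one has $\frac{1}{1+r}>\frac12$, so $t^{1/(1+r)} \gg t^{1/2}$. Correspondingly, your proposed crude union bound ``with $m=1$ at scale $t$'' only controls a single excursion maximum at the scale $t^{1/(1+r)}$, which exceeds the claimed upper bound $t^{1/2}(\log t)^{\frac{3}{1+r}+\eps}$ when $r<1$ (at that target scale the one-excursion tail is of order $t^{-(1+r)/2}$ up to logs, and summing over the up-to-$t$ candidate excursions is not good enough); to control the straddling excursion one genuinely needs the refined count $N_t \le t^{(1+r)/2}(\log t)^{1+\eps}$. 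Fortunately no separate treatment is required: the partial maximum of the ongoing excursion up to time $t$ is at most its eventual maximum $M_{N_t+1}$, so $\max_{1\le i\le N_t} M_i \le \max_{1\le s\le t} X_s \le \max_{1\le i\le N_t+1} M_i$ (this is precisely the paper's display (\ref{maxeq})), and applying your i.i.d.-maximum bound with $m=N_t+1$ together with Theorem \ref{numberthm} absorbs the incomplete excursion at no extra cost. With that replacement your proposal coincides with the paper's proof.
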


\begin{rmks}
(a) Related  bounds in a general Lamperti-type setting are given in \cite[Section 4]{mvw};
the excursion-based
approach adopted here has both advantages and disadvantages compared to the method of \cite{mvw}.
The upper bounds in Section 4 of \cite{mvw}
essentially apply in the present setting (concretely, use
\cite[Theorem 3.2]{mvw} with Lemma \ref{semilem} here),
and lead to slightly sharper upper bounds than those in our Theorem \ref{xbounds}.
(See also Section 6 of \cite{bary} for some variations on these upper bounds.)
However, the lower bounds in \cite{mvw}
  cannot  readily be applied
here, even assuming a uniform bound on the increments of $X_t$.
Thus our lower bounds in Theorem \ref{xbounds} represent
progress over previous results.

(b) Our excursion-based approach  sheds no light on the transient case $r < -1$.
  For $r < -1$, under several additional assumptions, \cite[Theorem 4.2]{mvw} shows that there exists $D \in (0,\infty)$ such that
a.s., for all but finitely many $t \in \N$,
$X_t \geq t^{1/2} (\log t)^{-D}$.
  This result can be viewed
as a  generalization of the classical Dvoretzky--Erd\H os theorem
on rate of escape of transient simple symmetric random walk in $\Z^d$ $(d \geq 3)$ \cite{de}.

(c) In various special cases of certain nearest-neighbour random walks on $\Z^+$, using methods restricted to the nearest-neighbour case,
sharper versions of one or other of the bounds in Theorem \ref{xbounds}(i) are given in
\cite{rosenkrantz,fal81,szekely,gallardo,voit90,huillet}; of these, only \cite{huillet} also has a version of Theorem \ref{xbounds}(ii).
\end{rmks}

 \subsection{Single-excursion sums}
 \label{sec:excsum}

 For $\alpha \geq 0$ and $n \in \N$ set
 \begin{equation}
 \label{xidef}
  \xi^{(\alpha)}_n := \sum_{t= \tau_{n-1}}^{\tau_n -1} X_t^\alpha
 = \sum_{x \in \SS} x^\alpha \ell_n (x) ,\end{equation}
 with the occupation time notation of (\ref{excocc});
note that $\xi^{(0)}_n = \eta_1$.
Our next result gives tail bounds for $\xi^{(\alpha)}_1$.
Theorem \ref{xilem} has
applications in its own right:
 for example in \cite[Theorem 2.1, p.\ 908]{cscs} one
  is required to verify
 a condition  similar to $\Exp [ (\xi_1^{(\alpha)})^{2+\delta}] < \infty$.

 \begin{theo}
 \label{xilem}
 Suppose that (A0)--(A3) hold. Suppose that $r > -1$ and  (\ref{moms}) holds
with  $p > \max \{2,1+r\}$.
 Let $\alpha \geq 0$.
 Then for any $\eps >0$, for all $x$ sufficiently large,
 \begin{equation}
 \label{xitails}
  x^{-\frac{1+r}{\alpha+2}} (\log x)^{-\eps} \leq \Pr [ \xi^{(\alpha)}_1 \geq x] \leq  x^{-\frac{1+r}{\alpha+2}} (\log x)^{\frac{2+2r}{\alpha+2}+1 +\eps}.\end{equation}
 In particular, $\Exp \big[ (\xi^{(\alpha)}_1)^{\frac{1+r}{\alpha+2}} \big] = \infty$ but, for any $\eps>0$,
 $\Exp \big[ (\xi^{(\alpha)}_1)^{\frac{1+r}{\alpha+2} -\eps} ] < \infty$.
 \end{theo}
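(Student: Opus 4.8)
The plan is to exploit the elementary two-sided comparison $M_1^\alpha \le \xi^{(\alpha)}_1 \le \eta_1 M_1^\alpha$ together with the tail estimates for the excursion maximum $M_1$ and the excursion duration $\eta_1$ furnished by Theorems~\ref{lem4} and~\ref{etalem}. The underlying heuristic is that an excursion attaining a height of order $h$ lasts for a time of order $h^2$ and so contributes of order $h^{\alpha+2}$ to $\xi^{(\alpha)}_1$, so that $\{\xi^{(\alpha)}_1 \ge x\}$ is comparable to $\{M_1 \ge x^{1/(\alpha+2)}\}$, consistent with the claimed polynomial exponent $\frac{1+r}{\alpha+2}$. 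Since $\xi^{(0)}_1 = \eta_1$ and the claimed logarithmic exponent $\frac{2+2r}{\alpha+2}+1$ reduces to $2+r$ when $\alpha=0$, the case $\alpha = 0$ is precisely Theorem~\ref{etalem}; henceforth take $\alpha > 0$. Note also that $r > -1$ puts us, by Theorem~\ref{recthm}, in the recurrent regime, so $\eta_1 < \infty$ a.s.

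For the upper bound we use the inclusion
\[ \{ \xi^{(\alpha)}_1 \ge x \} \subseteq \{ M_1 \ge y \} \cup \{ \eta_1 \ge x y^{-\alpha} \} , \]
valid for every $y > 0$ because on the complement of the first event each of the $\eta_1$ summands composing $\xi^{(\alpha)}_1$ is at most $y^\alpha$. The naive choice $y = x^{1/(\alpha+2)}$ balances the polynomial factors but yields only the weaker logarithmic factor $(\log x)^{2+r+\eps}$; instead we take $y = \big( x (\log x)^{-2}\big)^{1/(\alpha+2)}$, so that $x y^{-\alpha} = x^{2/(\alpha+2)} (\log x)^{2\alpha/(\alpha+2)}$. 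Substituting these into the upper bounds of Theorems~\ref{lem4} and~\ref{etalem}, one checks that the two logarithmic exponents so produced \emph{both} equal $\frac{\alpha+4+2r}{\alpha+2} = \frac{2+2r}{\alpha+2}+1$, so each probability on the right is at most $x^{-\frac{1+r}{\alpha+2}}(\log x)^{\frac{2+2r}{\alpha+2}+1+\eps'}$ for all large $x$; summing and replacing $\eps'$ by a slightly larger $\eps$ absorbs the factor $2$ and gives the upper bound in~(\ref{xitails}).

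For the lower bound, fix a large constant $C$ and put $h := \big\lceil (Cx)^{1/(\alpha+2)} \big\rceil$. By the lower bound in Theorem~\ref{lem4} the first excursion reaches level $2h$ with probability at least $c_1 (2h)^{-1-r}(\log 2h)^{-\eps'} \asymp x^{-\frac{1+r}{\alpha+2}}(\log x)^{-\eps'}$. Conditionally on this event, with $\sigma$ the first time $X$ hits $[2h,\infty)$, the number of steps for $X$ to drop from level $2h$ to below level $h$ exceeds $c_2 h^2$ with probability bounded away from $0$, uniformly in large $h$: while $X \ge h$ the one-step mean drift is $O(1/h)$ by~(\ref{mu1}) and so contributes at most $O(c_2 h)$ to the displacement over $c_2 h^2$ steps, while the compensated increments form a martingale of conditional quadratic variation at most $(s^2 + o(1))c_2 h^2$ by~(\ref{mu2}), so Doob's $L^2$ maximal inequality bounds the probability of a downward displacement of size $h$ within $c_2 h^2$ steps by $\tfrac14$, say, once $c_2$ is small enough. (This is the same mechanism that drives the lower bound in Theorem~\ref{etalem}; cf.\ Lemma~\ref{lowbound}.) On the intersection of the two events $X_t \ge h$ for at least $c_2 h^2$ consecutive times $t$ within the first excursion, whence $\xi^{(\alpha)}_1 \ge c_2 h^2 \cdot h^\alpha = c_2 h^{\alpha+2} \ge c_2 C x \ge x$ provided $C \ge 1/c_2$. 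Therefore $\Pr[\xi^{(\alpha)}_1 \ge x] \ge c_1 x^{-\frac{1+r}{\alpha+2}}(\log x)^{-\eps'} \ge x^{-\frac{1+r}{\alpha+2}}(\log x)^{-\eps}$ for $x$ large, which is the lower bound in~(\ref{xitails}).

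The moment statements follow from~(\ref{xitails}) via $\Exp[Y^q] = q\int_0^\infty y^{q-1}\Pr[Y \ge y]\,\ud y$: the lower tail (with $\eps < 1$) forces $\Exp\big[(\xi^{(\alpha)}_1)^{\frac{1+r}{\alpha+2}}\big] = \infty$, while the upper tail gives $\Exp\big[(\xi^{(\alpha)}_1)^{\frac{1+r}{\alpha+2}-\eps}\big] < \infty$ for every $\eps > 0$. The only genuinely delicate ingredient is the uniform first-passage lower bound in the previous paragraph, which must be run under the weak moment hypothesis~(\ref{moms}) (there $p>2$ ensures the drift and the conditional variance are well defined, and $p>\max\{2,1+r\}$ is exactly what Theorems~\ref{lem4} and~\ref{etalem} require); everything else is bookkeeping layered on top of those two theorems, the one subtle point being that the threshold $y$ must carry the logarithmic correction $(\log x)^{-2/(\alpha+2)}$ in order for the two tail contributions to balance.
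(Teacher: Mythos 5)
Your proposal is correct and follows essentially the same route as the paper: the upper bound uses $\xi^{(\alpha)}_1 \leq \eta_1 M_1^\alpha$ with exactly the same log-corrected thresholds $M_1 \geq x^{\frac{1}{\alpha+2}}(\log x)^{-\frac{2}{\alpha+2}}$ and $\eta_1 \geq x^{\frac{2}{\alpha+2}}(\log x)^{\frac{2\alpha}{\alpha+2}}$ combined with Theorems \ref{lem4} and \ref{etalem}, and the lower bound reduces $\{\xi^{(\alpha)}_1 \geq x\}$ to $\{M_1 \geq \mathrm{const}\cdot x^{\frac{1}{\alpha+2}}\}$ via the ``stay above $h$ for order $h^2$ steps'' estimate. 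The only (cosmetic) difference is that you re-derive that estimate inline with a drift-plus-martingale decomposition and Doob's $L^2$ inequality, whereas the paper simply invokes its Lemma \ref{lowbound} (with $\Phi(x)=x^\alpha$), which encapsulates the same mechanism.
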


\begin{rmks} (a) The $\alpha=0$ case of Theorem \ref{xilem} reduces to Theorem \ref{etalem}.
 Theorem \ref{xilem} can also be seen as a generalization of Theorem \ref{lem4},
 since here $\lim_{\alpha\to\infty} ( \xi_1^{(\alpha)} )^{1/\alpha} = M_1$, a.s.,
 so for any $x$, $\Pr [ \xi_1^{(\alpha)} \geq x^\alpha ] \to \Pr [ M_1 \geq x]$ as $\alpha \to \infty$.

 (b) For simplicity we have stated our results for functionals based on   $x \mapsto x^\alpha$,
 but our methods apply  to any nonnegative nondecreasing function (cf Lemma \ref{lowbound} below).
 \end{rmks}

 \subsection{Path integrals}
 \label{sec:integrals}

   Fix $\alpha \geq 0$ and define $S^{(\alpha)}_t := \sum_{s =1}^t X_s ^\alpha$.
  We have the following  asymptotic results on $S^{(\alpha)}_t$.

 \begin{theo}
 \label{sthm}
 Suppose  (A0)--(A3) hold, $r > -1$, and  (\ref{moms}) holds
with $p > \max \{2,1+r\}$.
 \begin{itemize}
 \item[(i)]
Suppose that $-1 < r \leq 1$. Then
  for any $\eps >0$, a.s., for all but finitely many $t$,
  \[ t^{\frac{\alpha+2}{2}} (\log t)^{-\frac{(\alpha+2)(4+r)}{1+r}  -\eps} \leq
 S^{(\alpha)}_{t} \leq t^{\frac{\alpha+2}{2}} (\log t)^{\frac{3\alpha+6}{1+r} + 2 + \eps} .\]
 \item[(ii)]
  Suppose that $1 < r \leq 1+\alpha$. Then
 for any $\eps >0$, a.s., for all but finitely many $t$,
 \[ t^{\frac{\alpha+2}{1+r}} (\log t)^{-\frac{\alpha+2}{1+r} -\eps}
 \leq S^{(\alpha)}_{t} \leq t^{\frac{\alpha+2}{1+r}} (\log t)^{\frac{2\alpha+4}{1+r} + 2 +\eps} .\]
 \item[(iii)] Suppose that $r>1+\alpha$.  Then, with $\pi$
  as defined at (\ref{pidef}),
 as $t \to \infty$, a.s.,
 \begin{equation}
 \label{nudef}  t^{-1} S^{(\alpha)}_{t} \to  \frac{\Exp [ \xi^{(\alpha)}_1]}{\Exp [ \eta_1]} =   \sum_{x \in \SS} x^\alpha \pi(x) =: \nu_\alpha \in (0,\infty) . \end{equation}
 \end{itemize}
 \end{theo}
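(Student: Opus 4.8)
The plan is to decompose the path integral $S^{(\alpha)}_t$ into a sum over completed excursions plus a boundary term, and then transfer the tail estimates for the single-excursion sum $\xi^{(\alpha)}_1$ (Theorem \ref{xilem}) into almost-sure asymptotics via Borel--Cantelli and standard results on sums of i.i.d.\ heavy-tailed random variables. Concretely, writing $N_t$ for the number of completed excursions by time $t$ (Section \ref{sec:excnum}), one has the sandwich
\[
\sum_{n=1}^{N_t} \xi^{(\alpha)}_n \ \leq \ S^{(\alpha)}_t \ \leq \ \sum_{n=1}^{N_t+1} \xi^{(\alpha)}_n ,
\]
so up to one extra excursion term $S^{(\alpha)}_t$ is the partial sum of the i.i.d.\ sequence $(\xi^{(\alpha)}_n)$ evaluated at the random index $N_t$. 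The first step is therefore to pin down the growth of $\Sigma_m := \sum_{n=1}^m \xi^{(\alpha)}_n$ as $m \to \infty$; the second step is to substitute $m = N_t$ using Theorem \ref{numberthm}; the third step is to control the single leftover excursion $\xi^{(\alpha)}_{N_t+1}$, which by the tail bound (\ref{xitails}) and Borel--Cantelli is $O(t^{(\alpha+2)/(1+r)+\eps})$ or so, hence negligible against the main term in cases (i) and (ii) (and in case (iii) one argues directly that the last excursion has duration $o(t)$).

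For the growth of $\Sigma_m$: in case (iii), $r > 1+\alpha$ means $\Exp[\xi^{(\alpha)}_1] < \infty$ (by the moment statement in Theorem \ref{xilem}) and $\Exp[\eta_1]<\infty$, so the strong law of large numbers gives $\Sigma_m / m \to \Exp[\xi^{(\alpha)}_1]$ and $N_t/t \to 1/\Exp[\eta_1]$ by Theorem \ref{numberthm}(ii); composing yields (\ref{nudef}), with the identification of the limit as $\sum_x x^\alpha \pi(x)$ coming from (\ref{pidef}) and the monotone convergence / interchange $\Exp[\xi^{(\alpha)}_1] = \sum_x x^\alpha \Exp[\ell_1(x)]$ justified by positivity, finiteness being exactly the hypothesis $r>1+\alpha$. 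In cases (i) and (ii), $\xi^{(\alpha)}_1$ has a power-law tail with index $\beta := (1+r)/(\alpha+2)$, and since $r \le 1+\alpha$ we have $\beta \le 1$, so $\Exp[\xi^{(\alpha)}_1] = \infty$ and $\Sigma_m$ grows superlinearly. Here the key input is a two-sided almost-sure estimate for partial sums of nonnegative i.i.d.\ variables with tail $\asymp x^{-\beta}$ up to logarithmic corrections: the upper bound on $\Sigma_m$ follows from a union bound over the individual terms (each $\xi^{(\alpha)}_n$ is at most $m^{1/\beta}(\log m)^{O(1)}$ for all $n \le m$, eventually, by Borel--Cantelli using (\ref{xitails})) combined with a truncation-and-first-moment argument for the truncated sum; the lower bound follows because a single term $\xi^{(\alpha)}_n$ among the first $m$ is, infinitely often, at least $m^{1/\beta}(\log m)^{-\eps}$, again by Borel--Cantelli (the events are independent). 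This produces $\Sigma_m = m^{1/\beta}(\log m)^{\pm O(1)}$ with explicit logarithmic exponents read off from (\ref{xitails}) and the number of terms.

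Finally one substitutes $m = N_t$. In case (i), Theorem \ref{numberthm}(i) gives $N_t = t^{(1+r)/2}(\log t)^{\pm O(1)}$, and $1/\beta = (\alpha+2)/(1+r)$, so $\Sigma_{N_t} \asymp N_t^{(\alpha+2)/(1+r)} (\log\cdot)^{\pm} = t^{(\alpha+2)/2}(\log t)^{\pm O(1)}$, matching the claimed exponent $(\alpha+2)/2$; one then has to bookkeep the logarithmic powers carefully (each application of $N_t \mapsto N_t^{1/\beta}$ multiplies the log-exponent of $N_t$ by $1/\beta$ and adds that of $\Sigma$), and check that the stated exponents $-\frac{(\alpha+2)(4+r)}{1+r}-\eps$ and $\frac{3\alpha+6}{1+r}+2+\eps$ come out — this is the main bookkeeping obstacle but involves no new ideas. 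Case (ii) is identical with $N_t = t^{1/(1+r)}(\log t)^{\pm}$ in place of case (i)'s bound, giving exponent $\frac{\alpha+2}{1+r}$. The main genuine obstacle, as opposed to bookkeeping, is establishing the sharp two-sided almost-sure control of $\Sigma_m$ in the infinite-mean regime with matching logarithmic corrections; the upper bound in particular requires care because a naive union bound over all $n$ loses a factor, so one must split into a dominant-term contribution and a truncated-sum contribution and optimize the truncation level, exactly as in the proof of Theorem \ref{xbounds}, on which this argument can be modelled.
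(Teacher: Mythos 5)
Your overall route is the same as the paper's: sandwich $S^{(\alpha)}_t$ between $S^{(\alpha)}_{\tau_{N_t}}$ and $S^{(\alpha)}_{\tau_{N_t+1}}$, control the i.i.d.\ partial sums $\Sigma_m=\sum_{n\leq m}\xi^{(\alpha)}_n$ using the tail bounds of Theorem \ref{xilem}, and then substitute the bounds on $N_t$ from Theorem \ref{numberthm}. The paper packages the partial-sum step as Lemma \ref{xibounds}, proved by quoting the i.i.d.\ sum/maximum results (Lemma \ref{mz}, resting on Feller's theorem) rather than redoing the truncation argument by hand; your truncation-plus-largest-term derivation reproduces the same logarithmic exponents, so that difference is cosmetic, and your separate treatment of the leftover excursion is unnecessary once the upper sandwich is evaluated at $m=N_t+1$. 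One phrasing caveat: for the lower bound on $\Sigma_m$ you need $\max_{n\leq m}\xi^{(\alpha)}_n\geq m^{(\alpha+2)/(1+r)}(\log m)^{-O(1)}$ \emph{for all but finitely many} $m$, obtained from $\Pr[\max_{n\leq m}\xi^{(\alpha)}_n<x]=(\Pr[\xi^{(\alpha)}_1<x])^m$ and the first Borel--Cantelli lemma as in Lemma \ref{lem7}(ii); an ``infinitely often'' statement, which is what you wrote, would not yield the eventual lower bound asserted in the theorem.

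The one step that fails as written is case (ii). There $r>1$, the process is positive recurrent, and Theorem \ref{numberthm}(ii) gives $N_t/t\to 1/\Exp[\eta_1]$, so $N_t$ grows \emph{linearly} in $t$; the rate $t^{1/(1+r)}$ you substitute is the growth rate of the running maximum (Theorem \ref{xbounds}(ii)), not of the number of excursions. Feeding $N_t\approx t^{1/(1+r)}$ into $\Sigma_{N_t}\approx N_t^{(\alpha+2)/(1+r)}$ would give the exponent $(\alpha+2)/(1+r)^2$, contradicting the exponent $(\alpha+2)/(1+r)$ that you (correctly) assert. The fix is immediate and is exactly what you already do in case (iii): use $N_t\asymp t$ from Theorem \ref{numberthm}(ii), so that $S^{(\alpha)}_t\approx \Sigma_{N_t}\approx t^{(\alpha+2)/(1+r)}$, with the logarithmic factors inherited unchanged from the partial-sum bounds (Lemma \ref{xibounds}(i), which covers the whole range $-1<r\leq 1+\alpha$). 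With that correction, and the log-power bookkeeping in case (i) carried out as you indicate, the argument matches the paper's proof.
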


  Theorem \ref{sthm}(iii)
 is  essentially a consequence of  `ergodic theory'
 for regenerative processes (see e.g.\ \cite[Theorem VI.3.1, p.\ 178]{asmussen})
 but our proof of Theorem \ref{sthm}(i)--(ii) yields part (iii) at little
 additional effort, so we give the self-contained proof in Section \ref{sec:proofsthms}.

  A case of special interest is when $\alpha=1$, in which case it
   is natural to study the normalized sum $t^{-1} S^{(1)}_t$ which is just
   the {\em centre of mass} of $( X_1,\ldots,X_t )$. Denote
   \begin{equation}
   \label{Gdef}
    G_t := t^{-1} S^{(1)}_t = t^{-1} \sum_{s=1}^t X_s .\end{equation}
Theorem  \ref{xbounds} yields the following immediate corollary for $G_t$. For simplicity of presentation,
we suppress the logarithmic factors in Theorem \ref{xbounds} by stating Corollary \ref{gthm} parts (i) and (ii) on the logarithmic scale.

 \begin{cor}
 \label{gthm}
  Suppose  (A0)--(A3) hold, $r > -1$, and  (\ref{moms}) holds
with $p > \max \{2,1+r\}$.
  \begin{itemize}
  \item[(i)] Suppose   that $-1 < r \leq 1$. Then  $\lim_{t\to \infty} \frac{\log G_t}{\log t} = \frac{1}{2}$, a.s.
 \item[(ii)] Suppose that $1 <r \leq 2$. Then $\lim_{t\to\infty} \frac{\log G_t}{\log t} = \frac{2-r}{1+r} \in [0, 1/2)$, a.s.
 \item[(iii)] Suppose that  $r>2$.   Then for $\nu_1 \in (0,\infty)$
 given by (\ref{nudef}), $\lim_{t\to\infty} G_{t} =  \nu_1$, a.s.
\end{itemize}
 \end{cor}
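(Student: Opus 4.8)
\textbf{Proof proposal for Corollary~\ref{gthm}.}
The plan is to deduce all three parts directly from Theorem~\ref{sthm} with $\alpha=1$, using that $G_t = t^{-1} S^{(1)}_t$ by the definition~(\ref{Gdef}). Note first that the hypotheses of Corollary~\ref{gthm} are exactly (A0)--(A3), $r>-1$, and (\ref{moms}) with $p>\max\{2,1+r\}$, which are precisely the standing assumptions of Theorem~\ref{sthm}, so there is nothing to verify there; and the three ranges $-1<r\le 1$, $1<r\le 2$, $r>2$ match the three cases of Theorem~\ref{sthm} once $1+\alpha = 2$ is substituted.

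For part~(i), with $-1<r\le 1$, I would invoke Theorem~\ref{sthm}(i) at $\alpha=1$ to get, a.s.\ for all but finitely many $t$,
\[ t^{3/2} (\log t)^{-\frac{3(4+r)}{1+r} - \eps} \le S^{(1)}_t \le t^{3/2} (\log t)^{\frac{9}{1+r} + 2 + \eps} . \]
Dividing through by $t$ and taking logarithms yields $\log G_t = \tfrac12 \log t + O(\log\log t)$, whence $\log G_t/\log t \to \tfrac12$. For part~(ii), with $1<r\le 2 = 1+\alpha$, Theorem~\ref{sthm}(ii) at $\alpha=1$ gives $t^{3/(1+r)}(\log t)^{-3/(1+r)-\eps} \le S^{(1)}_t \le t^{3/(1+r)}(\log t)^{6/(1+r)+2+\eps}$; dividing by $t$ turns the power of $t$ into $\frac{3}{1+r}-1 = \frac{2-r}{1+r}$, so $\log G_t = \frac{2-r}{1+r}\log t + O(\log\log t)$ and hence $\log G_t/\log t \to \frac{2-r}{1+r}$. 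A one-line elementary check ($f(r)=\frac{2-r}{1+r}$ has $f'(r) = -3/(1+r)^2 < 0$, with $f(1)=\tfrac12$ and $f(2)=0$) confirms that this limit lies in $[0,\tfrac12)$ for $r\in(1,2]$, as claimed. For part~(iii), with $r>2 = 1+\alpha$, Theorem~\ref{sthm}(iii) at $\alpha=1$ states directly that $t^{-1}S^{(1)}_t \to \nu_1 \in (0,\infty)$ a.s., which is exactly $G_t \to \nu_1$.

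There is no substantial obstacle: the corollary is genuinely immediate once Theorem~\ref{sthm} is in hand. The only point requiring (routine) care is the passage from the two-sided log-scale bounds on $S^{(1)}_t$ to the statement about $\log G_t/\log t$: the polylogarithmic prefactors, after taking logarithms and dividing by $\log t$, contribute only a term of order $\log\log t/\log t \to 0$, so one may simply fix $\eps$ (e.g.\ $\eps=1$) rather than let it tend to $0$. (As a remark, the upper bounds in (i) and (ii) may alternatively be read off from Theorem~\ref{xbounds} via the trivial inequality $G_t \le \max_{1\le s\le t} X_s$, but the matching lower bounds still require Theorem~\ref{sthm}.)
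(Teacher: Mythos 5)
Your proposal is correct and is essentially the paper's own (implicit) argument: the corollary is treated as immediate by specializing Theorem~\ref{sthm} to $\alpha=1$, dividing by $t$, and noting that the polylogarithmic factors only contribute $O(\log\log t/\log t)$ on the logarithmic scale, exactly as you do. (The paper's sentence attributing the corollary to Theorem~\ref{xbounds} is evidently a cross-reference slip for Theorem~\ref{sthm}, as your own closing remark and the paper's subsequent remark about differing exponents in the positive-recurrent case both confirm.)
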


\begin{rmk}
Comparing the scaling exponents in Corollary \ref{gthm} to those in Theorem \ref{xbounds}, we see that they coincide (taking
value $\frac12$) in the null-recurrent case, but   differ  in the positive-recurrent case
($\frac{2-r}{1+r} < \frac{1}{1+r}$ for $r>1$). The intuition here is that
in the positive-recurrent case, the process rarely visits the scale of the maximum, so $G_t \ll \max_{1 \leq s \leq t} X_s$.
\end{rmk}

   \section{Applications}
\label{sec:appl}

 \subsection{Processes on the whole real line}
 \label{sec:line}

In this section
we give applications of our results from Section \ref{sec:results}
on half-line processes
to models defined on the whole line, for which new phenomena emerge.
We restrict to the Markovian case for simplicity of statement.
The $\R$-valued processes that we study are, loosely speaking,
 two half-line processes sewn together at $0$.

 \begin{itemize}
 \item[(B0)] Let $(X_t)_{t \in \N}$ be an irreducible, time-homogeneous Markov chain on  $\SS$,
 a locally finite subset of $\R$ with $0 \in \SS$, $\inf \SS = - \infty$, and $\sup \SS = + \infty$.
Take $X_1 =0$.
    \item[(B1)] Suppose that $ \Pr [ X_{t+1} = y \mid X_t = x ] = 0$ if $x$ and $y$ are separated by $0$. Suppose also that $\Pr [ X_{t+1} < 0 \mid X_t = 0] \in (0,1)$
    and $\Pr [ X_{t+1} > 0 \mid X_t = 0] \in (0,1)$.
    \end{itemize}

    Under (B1),   $X_t$ cannot jump over the origin, and from the origin jumps left or right each with positive probability.
    As above,   write $\Delta_t := X_{t+1} - X_t$ for the increments of $X_t$.

    \begin{itemize}
        \item[(B2)] Suppose that for some $p>2$ and $\delta>0$, $\Exp [ | \Delta_t |^p \mid X_t = x ] = O (|x|^{p-2-\delta})$
        as $|x| \to \infty$.
         Suppose also that  for some $c_+, c_- \in \R$ and $s^2_+, s^2_- \in (0,\infty)$,
  \begin{align}
  \label{mu1two}
   \Exp [ \Delta_t \mid X_t = x ] & = |x|^{-1} \left(  c_+ \1 \{ x>0 \} - c_- \1 \{ x < 0\} \right)  + o ( |x|^{-1} \log^{-1} |x| ),  \\
    \label{mu2two}
     \Exp [ \Delta_t^2 \mid X_t = x ] & =   \left( s^2_+ \1 \{ x>0 \} + s^2_- \1 \{ x < 0\}  \right)  + o (   \log^{-1} |x| ).
      \end{align}
  \end{itemize}

  Analogously to the definition of $r$ at (\ref{rdef}), set $r_\pm := -2c_\pm/s_\pm^2$.
  In this section we restrict
   to the setting in which $r_-, r_+ \in (-1,1]$, i.e., corresponding to
  null-recurrence of each of the half-line processes. Cases where one or more of $r_-, r_+$ is greater than $1$ can be dealt with using similar
  methods. We assume that $-1 < r_+ < r_- \leq 1$, so that the positive half-line is `less recurrent'.
  The following result demonstrates the interesting
  phenomenon of a separation of scales for the two sides of the process.

  \begin{theo}
  \label{line1}
  Suppose that (B0)--(B2) hold, and that $-1 < r_+ < r_- \leq 1$. Then $X_t$ is null-recurrent, and, a.s.,
  \begin{align*} \lim_{t \to \infty} \frac{ \log \max_{1 \leq s \leq t} X_s }{\log t} & = \frac{1}{2}, \textrm{ and }
  \\
   \lim_{t \to \infty} \frac{ \log | \min_{1 \leq s \leq t} X_s | }{\log t} & = \frac{1}{2} \left( \frac{1+r_+}{ 1+r_-} \right) \in (0,1/2) .
   \end{align*}
\end{theo}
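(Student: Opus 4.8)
The plan is to decompose the path of $X_t$ into its positive and negative excursions and to apply the half-line results of Section \ref{sec:results} to each side separately. First, observe that under (B1) the process cannot cross $0$, so between consecutive visits to $0$ it performs either an entirely positive or an entirely negative excursion. Let me write $(Y_t^+)$ for the half-line process on $\SS \cap [0,\infty)$ obtained by reflecting: concretely, $Y^+$ evolves like $X$ whenever $X \geq 0$, and from $0$ it is forced (with the conditional law of $X$ given a positive step) to make a positive step. Define $(Y^-_t)$ analogously from the negative side (after reflection $x \mapsto -x$). By construction $Y^+$ satisfies (A0)--(A3) with parameter $r = r_+$, and $Y^-$ satisfies them with $r = r_-$; the moment condition \eqref{moms} for $Y^\pm$ follows from (B2), and (A3) holds because $X$ is an irreducible time-homogeneous Markov chain with $0$ a singleton, so the strong Markov property gives the i.i.d.\ excursion structure. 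Since $-1 < r_\pm \leq 1$, Theorem \ref{recthm} shows each $Y^\pm$ is null-recurrent; combined with (B1) (from $0$ the chain goes each way with positive probability), a standard argument shows $X$ itself is null-recurrent, with infinitely many excursions of each sign, and the signs of successive excursions form an i.i.d.\ fair-ish coin sequence (probabilities $\Pr[X_2 > 0 \mid X_1 = 0]$ and its complement).

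Next, I would track time. Let $N_t$ be the total number of excursions of $X$ completed by time $t$, and $N_t^+$, $N_t^-$ the numbers of positive, resp.\ negative, excursions; by the law of large numbers for the i.i.d.\ sign sequence, $N_t^\pm = \Theta(N_t)$ up to a.s.\ logarithmic factors (indeed up to constant factors). The key point is that $\max_{1 \leq s \leq t} X_s$ is exactly the largest maximum $M^+$ among the $N_t^+$ positive excursions of $X$, and $|\min_{1 \leq s \leq t} X_s|$ is the largest maximum among the $N_t^-$ negative excursions. Now I apply Theorem \ref{xbounds}(i) to $Y^+$ and to $Y^-$: on the event that the first $n$ completed excursions of $X$ have been used, the relevant running maximum behaves like (the running maximum of $Y^\pm$ at its own internal clock). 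More carefully, since the maximum over $n$ i.i.d.\ copies of $M_1$ with tail $x^{-(1+r_\pm)}$ (Theorem \ref{lem4}) is $n^{1/(1+r_\pm)}$ up to logs, I get
\[
\max_{1 \leq s \leq t} X_s = (N_t^+)^{\frac{1}{1+r_+}} (\log t)^{O(1)}, \qquad \Big|\min_{1 \leq s \leq t} X_s\Big| = (N_t^-)^{\frac{1}{1+r_-}} (\log t)^{O(1)},
\]
a.s.\ for all large $t$, where I am being deliberately crude about logarithmic corrections since the statement is on the logarithmic scale.

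It remains to relate $N_t$ to $t$. Here the subtlety is that the total elapsed time $t$ is dominated by the excursions contributing the largest durations, and positive and negative excursions have different duration tails: by Theorem \ref{etalem}, a positive excursion has $\Pr[\eta \geq x] \asymp x^{-(1+r_+)/2}$ and a negative one $x^{-(1+r_-)/2}$, so $r_+ < r_-$ means positive excursions have heavier duration tails. Summing $n$ excursions of each type and using the sign LLN, the time accumulated after $n$ total excursions is dominated by the positive ones: $\tau_n = n^{2/(1+r_+)} (\log n)^{O(1)}$ a.s. Inverting, $N_t = t^{(1+r_+)/2} (\log t)^{O(1)}$, and hence also $N_t^\pm = t^{(1+r_+)/2}(\log t)^{O(1)}$. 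Substituting into the two displays above yields
\[
\frac{\log \max_{1\leq s\leq t} X_s}{\log t} \to \frac{1}{1+r_+}\cdot\frac{1+r_+}{2} = \frac12, \qquad
\frac{\log |\min_{1\leq s\leq t} X_s|}{\log t} \to \frac{1}{1+r_-}\cdot\frac{1+r_+}{2} = \frac12\Big(\frac{1+r_+}{1+r_-}\Big),
\]
which is the claim; the inequality $\frac12\big(\frac{1+r_+}{1+r_-}\big) \in (0,1/2)$ is immediate from $-1 < r_+ < r_-$.

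The main obstacle, and the step requiring genuine care, is the time-change bookkeeping: making rigorous that $\tau_n$ (hence $N_t$) is governed by the \emph{heavier}-tailed (positive) excursion durations, and that the logarithmic errors from the three separate inputs (excursion-count asymptotics à la Theorem \ref{numberthm}, the max-of-maxima asymptotics from Theorem \ref{xbounds}, and the sign LLN) compose into a genuinely $(\log t)^{o(1)}$ — or at worst polylogarithmic — correction that is swept up by the $\log t / \log t$ normalisation. In particular one must verify that the running maximum of $X$ restricted to positive excursions, sampled at the random times $\tau_{N_t^+}$, inherits the a.s.\ two-sided bounds of Theorem \ref{xbounds}(i) applied to $Y^+$ at its own clock $N_t^+$; this is where a coupling between $(X_t)$ restricted to positive excursions and the reflected process $Y^+$ must be spelled out, using the i.i.d.\ excursion property (A3). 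Everything else is a routine application of the half-line theorems.
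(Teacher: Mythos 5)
Your proposal is correct and follows essentially the same route as the paper's own proof: decompose into signed excursions from $0$ with i.i.d.\ signs, note via Theorem \ref{etalem} that the heavier-tailed positive durations force $\Pr[\eta_1 \geq x] = x^{-(1+r_+)/2+o(1)}$ (hence null-recurrence and $N_t^{\pm} = t^{(1+r_+)/2+o_\omega(1)}$ as in Theorem \ref{numberthm}), and then apply the max-of-maxima bounds (Theorem \ref{lem4} via Lemma \ref{lem6}, as in Theorem \ref{xbounds}) separately to each sign. The time-change bookkeeping you flag as the delicate step is exactly what the paper handles the same way, so no further comment is needed.
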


As a concrete example, consider a nearest-neighbour random walk on $\Z$ which jumps as a symmetric simple random walk when on the nonnegative integers,
but from $x < 0$ jumps to $x \pm 1$ with probabilities $\frac{1}{2} \pm \frac{1}{4x}$. Then $r_+ =0$ and $r_- =1$; viewed separately the two
half-line process are null-recurrent and have the same (diffusive) scale, but the `combined' process has scales $t^{1/2}$ on
$[0,\infty)$ and
$t^{1/4}$ on $(-\infty,0]$.

The intuition behind Theorem \ref{line1} is that  the walk makes a comparable number of positive and negative excursions,
but the positive ones have heavier-tailed durations, so occupy a dominant proportion of time.
The same intuition is behind the next result, which shows that the positive sojourns dominate the path-integral asymptotics.
Again we use the notation (\ref{Gdef}), now for $X_s$ taking values in $\R$.

\begin{theo}
\label{line2}
 Suppose that (B0)--(B2) hold, and  $-1 < r_+ < r_- \leq 1$.  Then, a.s.,  $G_t \to + \infty$  and
 \[ \lim_{t\to \infty} \frac{ \log G_t}{\log t} = \frac{1}{2} . \]
\end{theo}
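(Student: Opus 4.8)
\textbf{Proof plan for Theorem \ref{line2}.} The idea is to split $S_t^{(1)} := \sum_{s=1}^t X_s$ into its positive and negative parts, realise each as the path integral of a one-sided Lamperti process, apply Corollary~\ref{gthm}(i) to each, and import the separation of scales already established in Theorem~\ref{line1} to control the time-changes. For the reduction: since $X$ is a time-homogeneous Markov chain which by (B1) cannot cross $0$ and leaves $0$ to either side with positive probability, the strong Markov property at the successive returns to $0$ shows that the excursions of $X$ form an i.i.d.\ sequence, each being ``positive'' with probability $\rho_+ := \Pr[X_2 > 0 \mid X_1 = 0] \in (0,1)$ and ``negative'' otherwise, independently of its shape; conditionally on being positive (resp.\ negative) an excursion is distributed as an excursion of a half-line process on $[0,\infty)$ with drift data read off from (\ref{mu1two})--(\ref{mu2two}). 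Let $\widehat X$ be the Markov chain on $\SS \cap [0,\infty)$ obtained by concatenating the positive excursions (from $0$ it steps as $X$ conditioned on $\{X_{t+1} > 0\}$, and from $x > 0$ it steps as $X$), and $\widehat Y$ the analogous chain built from the negative excursions reflected into $[0,\infty)$. It is routine to verify that $\widehat X$ and $\widehat Y$ satisfy (A0)--(A3) and the moment bound (\ref{moms}) with some $p > 2$, with Lamperti parameters $r_+$ and $r_-$ (recall $-1 < r_+ < r_- \le 1$, so $r_+ \in (-1,1)$ and $r_- \in (-1,1]$): these hypotheses concern only large states, where $\widehat X$ (resp.\ $\widehat Y$) coincides with $X$ on $(0,\infty)$ (resp.\ $(-\infty,0)$). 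By Theorem~\ref{recthm}, $\widehat X$ and $\widehat Y$ are null-recurrent, so every excursion of $X$ is a.s.\ finite, $X$ is recurrent, and $\Exp[\eta_1] \ge \rho_+ \Exp[\eta_1 \mid \EE_1 \text{ positive}] = \infty$, so $X$ is null-recurrent.

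\emph{Bookkeeping and time-change.} Write $P_t := \sum_{s=1}^t X_s \1\{X_s > 0\}$ and $Q_t := -\sum_{s=1}^t X_s \1\{X_s < 0\}$, so $S_t^{(1)} = P_t - Q_t$ and $G_t = (P_t - Q_t)/t$ by (\ref{Gdef}). Recording which steps of $X$ fall inside positive, resp.\ negative, excursions yields, under the natural coupling, $P_t = \widehat S^{(1)}_{A_t}$ and $Q_t = \widehat U^{(1)}_{B_t}$, where $\widehat S^{(1)}$ and $\widehat U^{(1)}$ are the path integrals (in the sense of (\ref{Gdef})) of $\widehat X$ and $\widehat Y$, and $A_t$ (resp.\ $B_t$) counts the steps of $\widehat X$ (resp.\ $\widehat Y$) corresponding to positive- (resp.\ negative-) excursion steps of $X$ up to time $t$; under the same coupling $\max_{1 \le s \le t} X_s = \max_{1 \le m \le A_t} \widehat X_m$, $|\min_{1 \le s \le t} X_s| = \max_{1 \le m \le B_t} \widehat Y_m$, $A_t \le 2t$, $B_t \le 2t$, and $A_t, B_t \to \infty$ a.s. Applying Theorem~\ref{xbounds}(i) to $\widehat X$ and $\widehat Y$ gives a.s.\ $\log(\max_{m \le k} \widehat X_m) = \tfrac12 \log k + O(\log\log k)$ and likewise for $\widehat Y$; comparing at $k = A_t$ and $k = B_t$ with the two limits of Theorem~\ref{line1} then gives, a.s.,
\[ \lim_{t \to \infty} \frac{\log A_t}{\log t} = 1, \qquad \lim_{t \to \infty} \frac{\log B_t}{\log t} = \theta := \frac{1 + r_+}{1 + r_-} \in (0,1) . \]
The content here is exactly the separation of scales in Theorem~\ref{line1}: positive excursions have the heavier-tailed duration (tail exponent $\tfrac{1+r_+}{2} < \tfrac{1+r_-}{2}$), so on the logarithmic scale $X$ spends all of its time on the positive side.

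\emph{Conclusion.} Corollary~\ref{gthm}(i) applied to $\widehat X$ (parameter $r_+ \in (-1,1)$, with $p > 2 = \max\{2, 1+r_+\}$) gives $\log \widehat S^{(1)}_k / \log k \to \tfrac32$ a.s., and applied to $\widehat Y$ (parameter $r_- \in (-1,1]$) gives $\log \widehat U^{(1)}_k / \log k \to \tfrac32$ a.s. Composing with the time-change limits, a.s.\ $\log P_t / \log t \to \tfrac32$ and $\log Q_t / \log t \to \tfrac{3\theta}{2} < \tfrac32$. Hence $Q_t / P_t \to 0$ a.s., so $S_t^{(1)} = P_t (1 - Q_t/P_t)$ is eventually positive, $S_t^{(1)} \to +\infty$ (since $P_t \to \infty$), and $\log S_t^{(1)} / \log t \to \tfrac32$; dividing by $t$, $G_t \to +\infty$ and $\log G_t / \log t \to \tfrac32 - 1 = \tfrac12$, as required.

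The step I expect to be the real obstacle is the time-change: relating real time $t$ to the intrinsic clocks $A_t$, $B_t$ of the two one-sided processes. This is where the heavy-tailed (stable, index $< 1$) behaviour of the excursion durations is essential — one needs a sum of $n$ i.i.d.\ durations with tail exponent $\beta \in (0,1)$ to be $n^{1/\beta + o(1)}$ a.s., and the balance between the two sides to be governed by the smaller exponent. As this is already established in proving Theorem~\ref{line1}, in the write-up the step reduces to extracting the limits for $\log A_t / \log t$ and $\log B_t / \log t$ from that theorem together with Theorem~\ref{xbounds}; the remaining items (that $\widehat X$, $\widehat Y$ inherit (A0)--(A3) and (\ref{moms}), and the bookkeeping identities) are routine.
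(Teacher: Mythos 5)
Your argument is correct, and the final exponents ($\tfrac32$ for the positive part, $\tfrac32\tfrac{1+r_+}{1+r_-}$ for the negative part, hence $\tfrac12$ for $G_t$) agree with the paper's. The overall strategy is also the paper's: split $\sum_{s\le t}X_s$ into $\sum_{s\le t}X_s^+$ and $\sum_{s\le t}X_s^-$, treat each side as a half-line Lamperti process via the i.i.d.\ excursion structure, and show the positive side dominates. Where you differ is in how the time-change between real time and each one-sided clock is handled. The paper's proof re-uses the fact, established inside the proof of Theorem \ref{line1}, that the numbers of positive and of negative excursions completed by time $t$ are both $t^{(1+r_+)/2+o_\omega(1)}$, and then applies Lemma \ref{xibounds} (excursion sums over $n$ excursions, as in the proof of Theorem \ref{sthm}) to each sign separately. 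You instead work with the per-side step counts $A_t,B_t$, pin down their exponents by comparing the running maxima of $\widehat X,\widehat Y$ (Theorem \ref{xbounds}(i)) with the two limits in the \emph{statement} of Theorem \ref{line1}, and then invoke the packaged Corollary \ref{gthm}(i) in intrinsic time. This buys you a proof that uses Theorem \ref{line1} only as a black box rather than an intermediate fact from its proof, at the cost of routing through the maxima, which is slightly indirect; note also that for the domination step a cruder bound would do, since $Q_t\le t\,|\min_{s\le t}X_s|=t^{1+\frac{\theta}{2}+o_\omega(1)}$ with $1+\frac{\theta}{2}<\frac32$ already suffices, so the full Corollary \ref{gthm}(i) for $\widehat Y$ is not really needed. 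Your reduction of the two-sided chain to the two half-line chains (and the check of (A0)--(A3), (\ref{moms}), and the parameters $r_\pm$, including the sign flip for $\widehat Y$) is exactly the reduction the paper performs implicitly in the proof of Theorem \ref{line1}, so there is no gap there.
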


\begin{rmks}
(a) We leave largely open the case $r_+ = r_-$, but see the $d=1$ case of the model in Section \ref{sec:nonhom}.
(b) Similar results to those in this section can be obtained for processes on a state space that consists
of multiple copies of $[0,\infty)$, joined at a common origin, and embedded in $\R^d$.
\end{rmks}

   \subsection{Centrally biased random walks on $\R^d$}
   \label{sec:nonhom}

In this section we work in $\R^d$, $d \in \N$.
For $\bx \in \R^d$,  write
$\bx = (x_1, \ldots,x_d)$ in Cartesian coordinates.
 Let $\| \cdot \|$   denote the Euclidean
norm on $\R^d$. For a non-zero
vector $\bx \in \R^d$ we write $\hat \bx := \bx / \| \bx \|$ for the
corresponding unit vector. Write $\0 := (0,\ldots,0)$
for the origin.

\begin{itemize}
\item[(C0)]
Let $\Xi = (\xi_t)_{t \in \N}$ be an irreducible,  time-homogeneous
Markov process whose state space $\Sigma$ is  an unbounded, locally finite subset of $\R^d$ containing $\0$.
Let $\xi_1 = \0$.
\end{itemize}

We use the notation $\theta_t := \xi_{t+1} - \xi_t$ for the increments of the walk.
The assumption (C0) implies that
the distribution of
$\theta_t$ depends only on the position $\xi_t \in \Sigma$ and not on $t$.
We assume that for some $p >2$, $\delta >0$, and $C < \infty$,
\begin{equation}
\label{moms2}
  \Exp [ \| \theta_t \|^{p} \mid \xi_t = \bx ] \leq C  ( 1 +  \| \bx \| )^{p-2-\delta}.\end{equation}

Denote the one-step
{\em mean drift} vector
$\mu (\bx) := \Exp [ \theta_t \mid \xi_t = \bx ]$
for $\bx \in \Sigma$, and denote the
covariance matrix at $\bx \in \Sigma$ by
$M (\bx) := (M_{ij} (\bx ))_{i,j} := \Exp [ {\theta_t}^{\!\!\top} \theta_t \mid \xi_t = \bx]$,
for $\bx \in \Sigma$, where $\theta_t$ is viewed as  a row-vector.
In   vector equations such as the equation for $\mu (\bx)$
 in the following assumption, an expression of the form $o( h( \| \bx \|))$ is to be interpreted
a   vector   whose components are each $o( h( \| \bx \|))$ as $\| \bx \| \to \infty$, uniformly in $\bx$
given $\| \bx \|$.

\begin{itemize}
\item[(C1)] Suppose that there exist $\rho \in \R$ and $\sigma^2 \in (0,\infty)$ for which, as $\| \bx \| \to \infty$,
\begin{align*}
\mu (\bx ) & = \rho \hat \bx \| \bx \|^{-1} + o ( \| \bx \|^{-1} \log^{-1} \| \bx \| ) ,\\
M_{ij} (\bx ) & = \sigma^2 \1 \{ i = j \} + o ( \log^{-1} \| \bx \| ) .\end{align*}
\end{itemize}

The assumption on $M$ in (C1) implies that $\xi_t$ has an asymptotically diagonal covariance structure.
Processes satisfying (C0) and (C1) were studied by Lamperti \cite{lamp1,lamp3} under the name {\em centrally biased
random walks}, due to the nature of the drift field;
the name had been used earlier by Gillis \cite{gillis} for a different model. Our main result on such models is the following, which will enable us to apply the
results of Section \ref{sec:results} to generalize and sharpen Lamperti's results, among
 other things.

\begin{theo}
\label{cbrw}
Suppose that (C0) and (C1) hold, and (\ref{moms2}) holds for some $p>2$.
Let $X_t = \| \xi_t \|$. Then $X_t$ satisfies the conditions (A0)--(A3), with
\[ c = \rho + (d -1)(\sigma^2/2), ~~~ s^2 = \sigma^2 ;\]
hence $r = 1 - d - (2 \rho/\sigma^2)$.
Moreover,   (\ref{moms}) holds for the given $p>2$.
\end{theo}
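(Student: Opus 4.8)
The plan is to verify the conditions (A0)--(A3) and the moment bound \eqref{moms} for $X_t = \|\xi_t\|$ in turn, starting from the corresponding assumptions on $\xi_t$. Conditions (A0) and (A1) are essentially immediate from the discussion at the end of Section~\ref{sec:results}: since $\Sigma$ is locally finite and unbounded, so is $\SS = \{ \|\bx\| : \bx \in \Sigma \} \subseteq [0,\infty)$, and $0 \in \SS$ since $\0 \in \Sigma$; the map $f(\bx) = \|\bx\|$ has $f^{-1}(x)$ finite for each $x$ (local finiteness), so (A1) follows from irreducibility of $\Xi$ exactly as sketched in the paragraph preceding Section~\ref{sec:recurrence}. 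For (A3): since $\0$ need not be the unique point with $f(\bx)=0$ only if there were other points of norm zero, but $\|\bx\|=0$ forces $\bx = \0$, so $f^{-1}(0) = \{\0\}$ is a single point and the excursions of $X_t$ between visits to $0$ are exactly the excursions of $\xi_t$ between visits to $\0$; the strong Markov property then gives the full regenerative structure, hence both parts of (A3).

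The substantive part is computing the one-step drift and second moment of $X_t$ and checking \eqref{mu1}, \eqref{mu2}, and \eqref{moms}. The standard device is a Taylor expansion of $\|\bx + \theta\|$ around $\bx$ for $\bx \neq \0$: writing $\theta = \theta_t$,
\[
\|\bx + \theta\| - \|\bx\| = \hat\bx \cdot \theta + \frac{1}{2\|\bx\|}\left( \|\theta\|^2 - (\hat\bx\cdot\theta)^2 \right) + R(\bx,\theta),
\]
where the remainder $R$ is controlled by a third-order term of order $\|\theta\|^3 \|\bx\|^{-2}$ (on the event that $\|\theta\| \le \|\bx\|/2$, say) plus a crude bound $\|\theta\|$ on the complementary event. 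Taking conditional expectations: the linear term contributes $\hat\bx \cdot \mu(\bx) = \rho\|\bx\|^{-1} + o(\|\bx\|^{-1}\log^{-1}\|\bx\|)$ by (C1); the quadratic term contributes $\frac{1}{2\|\bx\|}\bigl( \operatorname{tr} M(\bx) - \hat\bx M(\bx)\hat\bx^\top \bigr)$, and since $M_{ij}(\bx) = \sigma^2 \1\{i=j\} + o(\log^{-1}\|\bx\|)$, we get $\operatorname{tr} M(\bx) = d\sigma^2 + o(\log^{-1}\|\bx\|)$ and $\hat\bx M(\bx)\hat\bx^\top = \sigma^2 + o(\log^{-1}\|\bx\|)$, so this term is $\frac{(d-1)\sigma^2/2}{\|\bx\|} + o(\|\bx\|^{-1}\log^{-1}\|\bx\|)$; and the remainder terms are $O(\|\bx\|^{-2})$ in expectation using \eqref{moms2} with $p>2$, hence negligible compared with $\|\bx\|^{-1}\log^{-1}\|\bx\|$. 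Combining gives $\Exp[\Delta_t \mid \F_t] = (\rho + (d-1)\sigma^2/2)\|\bx\|^{-1} + o(\|\bx\|^{-1}\log^{-1}\|\bx\|)$, i.e.\ \eqref{mu1} with $c = \rho + (d-1)(\sigma^2/2)$. For \eqref{mu2}, square the Taylor expansion: the leading term is $(\hat\bx\cdot\theta)^2$, whose conditional expectation is $\hat\bx M(\bx)\hat\bx^\top = \sigma^2 + o(\log^{-1}\|\bx\|)$, and all cross-terms and higher-order terms are $O(\|\bx\|^{-1})$ in expectation (again via \eqref{moms2}), giving \eqref{mu2} with $s^2 = \sigma^2$. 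Finally \eqref{moms} follows from $|\Delta_t| = \bigl|\,\|\xi_{t+1}\| - \|\xi_t\|\,\bigr| \le \|\theta_t\|$ by the triangle inequality, so $\Exp[|\Delta_t|^p \mid \F_t] \le \Exp[\|\theta_t\|^p \mid \xi_t] \le C(1+\|\xi_t\|)^{p-2-\delta} = C(1+X_t)^{p-2-\delta}$, which is exactly \eqref{moms} for the same $p$ and $\delta$. The stated value $r = -2c/s^2 = -2(\rho + (d-1)\sigma^2/2)/\sigma^2 = 1 - d - 2\rho/\sigma^2$ then follows from \eqref{rdef}.

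The main obstacle is the careful bookkeeping of error terms in the Taylor expansion near the origin and the uniformity of the $o(\cdot)$ estimates: one must ensure that the $o(\log^{-1}\|\bx\|)$ errors inherited from (C1) dominate the genuinely $O(\|\bx\|^{-2})$ Taylor remainders and the contributions from the event $\{\|\theta_t\| > \|\bx\|/2\}$, which requires splitting according to the size of $\|\theta_t\|$ relative to $\|\bx\|$ and invoking \eqref{moms2} to bound the large-jump contribution (by Markov's inequality, $\Pr[\|\theta_t\| > \|\bx\|/2 \mid \xi_t = \bx] = O(\|\bx\|^{-p-\delta'})$ type estimates, which is more than enough). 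All of this is routine but must be done with the non-random, $t$-uniform interpretation of the Landau symbols in mind, as emphasised after (A2). The verification of (A0), (A1), (A3) is purely structural and presents no difficulty.
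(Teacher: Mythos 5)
Your verification of (A0), (A1), (A3) and (\ref{moms}), and your Taylor-expansion computation of the drift giving $c=\rho+(d-1)(\sigma^2/2)$, match the paper's proof essentially step for step (the paper truncates on $\{\|\theta_t\|\leq\|\bx\|^\gamma\}$ with $\gamma$ close to $1$ rather than on $\{\|\theta_t\|\leq\|\bx\|/2\}$, but that is cosmetic). The one genuine difference is the second-moment estimate (\ref{mu2}): you square the Taylor expansion, which forces you to control third- and fourth-order terms such as $(\hat\bx\cdot\theta_t)\bigl(\|\theta_t\|^2-(\hat\bx\cdot\theta_t)^2\bigr)/\|\bx\|$ under (\ref{moms2}) with possibly $p<3$; this does work after truncation, but the resulting errors are $O(\|\bx\|^{-\delta})$, not $O(\|\bx\|^{-1})$ as you assert (still sufficient, being $o(\log^{-1}\|\bx\|)$). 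The paper instead uses the exact identity $(X_{t+1}-X_t)^2=\|\theta_t\|^2+2\,\bx\cdot\theta_t-2\|\bx\|(X_{t+1}-X_t)$, takes conditional expectations, and feeds in the already-established drift estimate together with (C1); this needs nothing beyond second moments of $\theta_t$ and no extra truncation bookkeeping, so it is the cleaner route, though yours is valid. One further small slip: Markov's inequality with (\ref{moms2}) gives $\Pr[\|\theta_t\|>\|\bx\|/2\mid\xi_t=\bx]=O(\|\bx\|^{-2-\delta})$, not $O(\|\bx\|^{-p-\delta'})$; the correct, weaker rate is still ample for the large-jump contributions you need, namely $\Exp[\|\theta_t\|\1\{\|\theta_t\|>\|\bx\|/2\}\mid\xi_t=\bx]=O(\|\bx\|^{-1-\delta})$ and $\Exp[\|\theta_t\|^2\1\{\|\theta_t\|>\|\bx\|/2\}\mid\xi_t=\bx]=O(\|\bx\|^{-\delta})$, so neither inaccuracy amounts to a gap.
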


From Theorem \ref{cbrw}, we  immediately deduce a series of results for $\Xi$ from the theorems
in Section \ref{sec:results}. We state two such corollaries.
 Note that if we set
$\eta := \min \{ t \in \N : \xi_t = \0 \}$,
we have from (C0) that $\eta = \eta_1$ for $X_t = \| \xi_t \|$ in our previous notation,
since $X_t = 0$ if and only if $\xi_t = \0$. Theorem \ref{cbrw} with Theorems \ref{recthm} and \ref{etalem}
gives the following result.

\begin{cor}
\label{cor1}
Suppose that (C0) and (C1) hold, and  (\ref{moms2}) holds
with $p>2$. Then $\Xi$ is
\begin{itemize}
\item[(i)] transient if $2 \rho / \sigma^2 > 2-d$;
\item[(ii)] null-recurrent if $-d \leq 2 \rho / \sigma^2 \leq 2 -d$;
\item[(iii)] positive-recurrent if $2 \rho/\sigma^2 < - d$.
\end{itemize}
Moreover, in the recurrent cases, $\Exp [ \eta^q ] < \infty$ if and only if $q < q_0:= 1 - (d/2) - (\rho/\sigma^2)$.
\end{cor}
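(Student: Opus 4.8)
The plan is to read off Corollary~\ref{cor1} from Theorem~\ref{cbrw} combined with the recurrence classification (Theorem~\ref{recthm}) and the excursion-time tail estimates (Theorem~\ref{etalem}); the substance of the argument is a bookkeeping translation of parameters. First I would apply Theorem~\ref{cbrw}: under (C0), (C1) and (\ref{moms2}) with $p>2$, the radial process $X_t := \|\xi_t\|$ satisfies (A0)--(A3) and the increment bound (\ref{moms}) for the same $p$, with $c = \rho + (d-1)(\sigma^2/2)$ and $s^2 = \sigma^2$, so that by (\ref{rdef}) one has $r = -2c/s^2 = 1 - d - 2\rho/\sigma^2$. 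Since $\0$ is the only point of $\Sigma$ of norm zero, $\Xi$ and $X$ visit their respective origins at exactly the same times; in particular the first return time $\eta$ of $\Xi$ to $\0$ coincides with the first excursion length $\eta_1$ of $X$ (as already observed in the text preceding the corollary), and $\Xi$ is recurrent, null-recurrent, or positive-recurrent precisely when $X$ is.

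For the trichotomy I would apply Theorem~\ref{recthm} to $X$, which is transient iff $r<-1$, null-recurrent iff $-1\le r\le 1$, and positive-recurrent iff $r>1$; its hypotheses, namely (A0)--(A3) and (\ref{moms}) with $p>2$, are exactly what Theorem~\ref{cbrw} supplies. Substituting $r = 1 - d - 2\rho/\sigma^2$ and rearranging, $r<-1 \iff 2\rho/\sigma^2 > 2-d$, $-1\le r\le 1 \iff -d \le 2\rho/\sigma^2 \le 2-d$, and $r>1 \iff 2\rho/\sigma^2 < -d$, which are precisely parts (i)--(iii).

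For the moment statement the key elementary identity is $q_0 := 1 - (d/2) - (\rho/\sigma^2) = (1+r)/2$, which follows from $d + 2\rho/\sigma^2 = 1-r$. In the recurrent regime with $r>-1$, Theorem~\ref{etalem} then gives $\Exp[\eta_1^{(1+r)/2}] = \infty$ together with $\Exp[\eta_1^{(1+r)/2-\eps}] < \infty$ for every $\eps>0$, i.e.\ $\Exp[\eta^q]<\infty$ if and only if $q<q_0$. Two minor points need care. First, Theorem~\ref{etalem} requires $p>\max\{2,1+r\}$: in the null-recurrent range $-1<r\le 1$ this is just the standing hypothesis $p>2$, whereas in the positive-recurrent range the full ``if and only if'' uses the correspondingly stronger increment moment (under merely $p>2$ one retains the weaker conclusion $\Exp[\eta]<\infty$, which is the definition of positive recurrence). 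Second, the boundary case $r=-1$, i.e.\ $2\rho/\sigma^2 = 2-d$, has $q_0 = 0$, so the assertion reduces to $\Exp[\eta^q]=\infty$ for all $q>0$; this follows either by monotonicity from the cases $r>-1$ already handled or directly from the lower-bound Lemma~\ref{lowbound}. I do not expect a genuine obstacle here: the only things to get right are the affine substitution $r \leftrightarrow 2\rho/\sigma^2$ and the checking of hypotheses, with the degenerate value $r=-1$ being the single point that genuinely requires a (short) separate argument.
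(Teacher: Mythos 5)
Your proposal is correct and follows exactly the paper's route: the paper derives Corollary \ref{cor1} in one line by combining Theorem \ref{cbrw} (giving (A0)--(A3), (\ref{moms}), and $r=1-d-2\rho/\sigma^2$ for $X_t=\|\xi_t\|$) with Theorems \ref{recthm} and \ref{etalem}, which is precisely your bookkeeping translation including $q_0=(1+r)/2$. Your two flagged points of care (the $p>\max\{2,1+r\}$ hypothesis in the positive-recurrent range and the boundary value $r=-1$, where ``monotonicity'' alone would not suffice and a separate critical-case Lyapunov estimate is really needed) are in fact finer than anything the paper itself supplies, so they do not put you at odds with its argument.
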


Corollary \ref{cor1} extends   results of Lamperti \cite{lamp1,lamp3}, who
   assumed uniformly bounded increments for $\xi_t$  and a
    stronger version of (B1) with the error term $\log^{-1} \| \bx \|$  replaced by $\| \bx \|^{-\delta}$ for $\delta>0$:
 see Theorem 4.1 of \cite[p.\ 324]{lamp1} and Theorem 5.1 of \cite[p.\ 142]{lamp3}. Also, Lamperti's result only covers {\em integer} $q$, and
is not sharp enough to determine whether $\Exp [\eta^{q_0}]$ is finite or infinite, and so cannot decide
on null- or positive-recurrence at the boundary case $2 \rho /\sigma^2 = -d$.

The next result follows from Theorem \ref{cbrw} with Theorem \ref{xbounds}, and gives almost-sure
scaling behaviour for the maximum of $\| \xi_t \|$ in the recurrent cases.

\begin{cor}
\label{cor2}
Suppose that (C0) and (C1) hold. \begin{itemize}
\item[(i)] Suppose that $-d \leq 2 \rho/\sigma^2 < 2-d$ and (\ref{moms2}) holds with $p>2$.
Then
\[  \lim_{t \to \infty} \frac{ \log \max_{1 \leq s \leq t} \| \xi_s \|}{\log t} = \frac{1}{2}, \as \]
\item[(ii)] Suppose that $2 \rho/\sigma^2 < -d$ and (\ref{moms2}) holds with $p>2 - d - (2\rho/\sigma^2)$.
Then
\[   \lim_{t \to \infty} \frac{ \log \max_{1 \leq s \leq t} \| \xi_s \|}{\log t} = \frac{1}{2 - d - (2\rho/\sigma^2)}, \as \]
\end{itemize}
\end{cor}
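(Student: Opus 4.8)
The plan is to deduce this directly from Theorem \ref{cbrw} together with Theorem \ref{xbounds}. First I would invoke Theorem \ref{cbrw}: under (C0), (C1) and (\ref{moms2}), the radial process $X_t = \| \xi_t \|$ satisfies (A0)--(A3) with $s^2 = \sigma^2$ and $c = \rho + (d-1)\sigma^2/2$, whence
\[
 r = -2c/s^2 = 1 - d - (2\rho/\sigma^2).
\]
Moreover Theorem \ref{cbrw} tells us that (\ref{moms}) holds with the same $p$ as in (\ref{moms2}). The key observation is that $\max_{1\le s\le t}\|\xi_s\| = \max_{1\le s\le t} X_s$ since the Euclidean norm is exactly the process to which the half-line results apply, so Theorem \ref{xbounds} applies verbatim to $\max_{1\le s\le t}\|\xi_s\|$.

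For part (i), the hypothesis $-d \le 2\rho/\sigma^2 < 2-d$ translates, via $r = 1-d-(2\rho/\sigma^2)$, into $-1 < r \le 1$; and (\ref{moms2}) with $p>2$ gives (\ref{moms}) with $p>2$, which is exactly the hypothesis of Theorem \ref{xbounds}(i). That theorem then gives, a.s.\ for all but finitely many $t$,
\[
 t^{1/2}(\log t)^{-\frac{4+r}{1+r}-\eps} \le \max_{1\le s\le t}\|\xi_s\| \le t^{1/2}(\log t)^{\frac{3}{1+r}+\eps},
\]
and taking $\log$ of both sides, dividing by $\log t$, and letting $t\to\infty$ (the logarithmic factors contribute $o(1)$ on the $\frac{\log(\cdot)}{\log t}$ scale) yields $\lim_{t\to\infty}\frac{\log\max_{1\le s\le t}\|\xi_s\|}{\log t} = \frac12$ a.s. For part (ii), $2\rho/\sigma^2 < -d$ gives $r > 1$, so $X_t$ is positive-recurrent, and the condition $p > 2 - d - (2\rho/\sigma^2) = 1 + r$ is precisely the hypothesis $p > 1+r$ of Theorem \ref{xbounds}(ii). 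That theorem gives $\max_{1\le s\le t}\|\xi_s\| = t^{1/(1+r)}(\log t)^{O(1)}$ with the stated sharp logarithmic corrections, and the same $\log$-scale argument gives the limit $\frac{1}{1+r} = \frac{1}{2-d-(2\rho/\sigma^2)}$ a.s.

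There is essentially no obstacle here beyond bookkeeping: the content is entirely in Theorems \ref{cbrw} and \ref{xbounds}, and the corollary is a translation of parameters ($r \leftrightarrow 2\rho/\sigma^2$) plus the elementary fact that polylogarithmic factors vanish on the logarithmic scale. The only point requiring a word of care is checking that the moment hypothesis in each part of the corollary matches exactly the moment hypothesis needed by the corresponding part of Theorem \ref{xbounds} after the substitution $p > 1+r = 2-d-(2\rho/\sigma^2)$ in case (ii); this I would verify explicitly but it is immediate.
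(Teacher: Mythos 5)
Your proposal is correct and follows exactly the route the paper takes: Corollary \ref{cor2} is obtained by combining Theorem \ref{cbrw} (giving $r = 1-d-(2\rho/\sigma^2)$ and transferring (\ref{moms2}) to (\ref{moms})) with Theorem \ref{xbounds}, the polylogarithmic factors being negligible on the $\frac{\log(\cdot)}{\log t}$ scale. Your parameter translations and moment-hypothesis checks in both cases match the paper's intended argument.
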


Upper bounds similar to those in Corollary \ref{cor2} can be derived from  \cite[Section 3]{mvw}: see Theorem 2.4 of \cite{bary}
for a similar application of such results, albeit under more restrictive assumptions.
As far as the authors are aware,
the lower bounds in Corollary \ref{cor2} are  new.

   \subsection{The simple harmonic urn}
   \label{sec:shu}

In this section we study a particular Markov chain $(A_t,B_t)$ on $\Z^2 \setminus \{ (0,0) \}$, with discrete time $t \in \N$. The model
was introduced in \cite{shu}, motivated by an urn model.
The model takes as input the distribution of a $\Z$-valued random variable $\kappa$. We assume that, for some $\lambda >0$, $\Exp [ \re^{\lambda |\kappa| } ] < \infty$.
Let $\kappa_0, \kappa_1, \ldots$ be a sequence of independent
copies of $\kappa$.
The transition law of the chain is as follows. If $A_t B_t \neq 0$, i.e., the chain is not on one of the coordinate axes,
it takes jumps of unit size according to the following:
\begin{align*}
 \Pr \left[ (A_{t+1}, B_{t+1} )   = ( a, b + \sgn (a) ) \mid (A_t,B_t) = (a,b) \right] & = \frac{|a|}{|a| + |b|}, ~( ab \neq 0); \\
 \Pr \left[ (A_{t+1}, B_{t+1} )   = ( a - \sgn (b) , b) \mid (A_t,B_t) = (a,b) \right] & = \frac{|b|}{|a| + |b|} , ~(ab \neq 0), \end{align*}
 where $\sgn(x) := x/|x|$ for $x \in \R \setminus \{0\}$. From one of the axes, the process jumps as follows:
 \begin{align*}
 (A_{t+1} , B_{t+1} ) & = ( \sgn( A_t) \max \{ 1 , | A_t | - \kappa_t \}   , \sgn (A_t) ) , ~ (A_t \neq 0, B_t = 0 ) ; \\
 (A_{t+1} , B_{t+1} ) & = ( - \sgn(B_t)  , \sgn ( B_t) \max \{ 1, | B_t | - \kappa_t \} ) , ~ (A_t = 0, B_t \neq 0 ) . \end{align*}
In words, the process has an approximately anti-clockwise trajectory,  traversing each quadrant in sequence.
When not on an axis, the process traverses the current quadrant using unit steps in two possible
directions, while from an axis, the process moves one step away from the axis (in the anti-clockwise direction)
and makes a special jump of size distributed as $\kappa$ towards the next destination axis, truncating so as to ensure
it does not actually reach the next axis in this jump.

So defined, $(A_t, B_t)$ is an irreducible Markov chain  on $\Z^2 \setminus \{ (0,0) \}$.

The basic case has $\kappa = 0$ a.s., in which case the process is the {\em simple harmonic urn}; another
particular case has $\kappa = 1$ a.s., which is known as the {\em leaky urn} \cite{shu}. The general $\kappa$ model is known as
the {\em noisy urn} \cite{shu}.
In fact, the leaky urn in \cite{shu} was defined slightly differently, with an absorbing state when $|A_t | + |B_t| =1$,
but the two definitions coincide up until the time of absorption.

Let $\nu_0 := 0$ and, for $n \in \N$, $\nu_{n} := \min \{ t > \nu_{n-1} : A_t B_t = 0 \}$, so that $\nu_1, \nu_2, \ldots$
are the successive times of visits to the axis by the process $(A_t, B_t)$. Define the embedded process
 $Z_t := | A_{\nu_{t}} | + | B_{\nu_{t}} |$ for $t \in \N$;
by construction, exactly one of $| A_{\nu_{t}} |$ and $| B_{\nu_{t}} |$ is 0.
Then $Z_t$ is an irreducible Markov chain on  $\N$,
representing the distance of the original Markov chain from the origin at those times when it visits an axis.
For definiteness, we take $(A_1,B_1 ) = (1, 0)$, so $Z_1 = 1$.

The following result shows the connection between this model
and our present setting.

\begin{proposition}
\label{shuprop}
Let $X_t = \sqrt{Z_t-1}$. Then  (A0)--(A3) hold with $\SS = \{ \sqrt{x -1} : x \in \N \}$,
$c = \frac{1-2\Exp [\kappa]}{4}$, and $s^2 = \frac{1}{6}$;
hence  $r = 6 \Exp[\kappa] -3$.
In addition, (\ref{moms}) holds for any $p>0$.
\end{proposition}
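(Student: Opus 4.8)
The plan is to verify each of the assumptions (A0)--(A3) and the moment bound (\ref{moms}) in turn for the process $X_t = \sqrt{Z_t - 1}$, where $Z_t = |A_{\nu_t}| + |B_{\nu_t}|$ is the embedded axis-visiting chain. First, (A0) is immediate: $\SS = \{\sqrt{x-1} : x \in \N\}$ is a locally finite unbounded subset of $[0,\infty)$ containing $0$ (corresponding to $Z_t = 1$), and $X_1 = \sqrt{Z_1 - 1} = 0$ since we start at $(A_1, B_1) = (1,0)$. The filtration is the natural one generated by $(A_t, B_t)$ sampled at the times $\nu_t$. For (A1), I would use the discussion at the end of Section \ref{sec:results}: $Z_t$ is an irreducible time-homogeneous Markov chain on $\N$ (irreducibility of $Z_t$ follows from that of $(A_t,B_t)$ together with the fact that each axis point is reachable), and $X_t$ is a deterministic injective function of $Z_t$ with finite preimages, so (A1) follows from irreducibility of $Z_t$. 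For (A3), since $X_t = 0 \iff Z_t = 1 \iff (A_t,B_t)$ is at distance $1$ from the origin, and the chain $(A_t,B_t)$ (equivalently $Z_t$) is a time-homogeneous Markov chain, the strong Markov property at the return times to the set $\{Z = 1\}$ gives the full regenerative i.i.d.\ structure; note $f^{-1}(0)$ corresponds to $Z=1$ which is a single state of the $Z$-chain, so (A3)(a) and (b) both hold.

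The substantive part is (A2) and (\ref{moms}): computing the asymptotic mean and second moment of $\Delta_t = X_{t+1} - X_t$ given $X_t = x$, i.e.\ given $Z_t = x^2 + 1 =: z$ large. The key is to analyse one step of the embedded chain, i.e.\ the passage of $(A_t, B_t)$ across one quadrant. Starting from an axis point at distance $z$ (say $(z', 0)$ up to signs, where actually after the special $\kappa$-jump the starting distance is $z$), the walk takes unit anti-clockwise steps until it hits the next axis; the distance at that hitting time, call it $\tilde Z$, is governed by a classical urn/ballot-type computation. One expects $\Exp[\tilde Z \mid Z_t = z] = z + O(1)$ and, crucially, a variance of order $z$: heuristically the walk from $(z,0)$ performs roughly $z$ steps each changing $|A|+|B|$ by $0$ or by $\pm$ something, accumulating fluctuations of order $\sqrt z$. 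Then the special jump subtracts $\kappa$ (independent, exponentially integrable), giving $Z_{t+1} = \tilde Z - \kappa$ up to the truncation (which only matters on an event of exponentially small probability when $z$ is large and so is negligible for the asymptotics). Writing $X_{t+1} = \sqrt{Z_{t+1} - 1}$ and Taylor-expanding $\sqrt{z + W - 1} - \sqrt{z-1}$ for a mean-zero-ish perturbation $W$ of size $O(\sqrt z)$, one gets $\Delta_t \approx \frac{W}{2\sqrt z} - \frac{W^2}{8 z^{3/2}} + \cdots$, so $\Exp[\Delta_t \mid \F_t] \approx \frac{\Exp[W]}{2\sqrt z} - \frac{\Exp[W^2]}{8 z^{3/2}}$. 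Matching this to $c X_t^{-1} = c/x = c/\sqrt{z-1}$ forces the combination $c = \frac{1 - 2\Exp[\kappa]}{4}$ (the $\Exp[\kappa]$ coming from the mean of the special jump, the constant $\tfrac14$ from the drift of the quadrant-crossing piece), and $\Exp[\Delta_t^2 \mid \F_t] \approx \frac{\Exp[W^2]}{4z} \to s^2$ forces $\Var(\tilde Z \mid z) \sim \frac{2}{3} z$, i.e.\ $s^2 = \frac{1}{6}$; then $r = -2c/s^2 = 6\Exp[\kappa] - 3$.

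I would organise the variance/mean computation by reference to the analysis already carried out in \cite{shu} for the simple harmonic urn and noisy urn, quoting or re-deriving the needed one-quadrant estimates (the distribution of where an anti-clockwise unit-step walk started near an axis hits the next axis is exactly the kind of quantity computed there). The moment bound (\ref{moms}) for all $p > 0$ follows because, conditionally on $Z_t = z$, the increment $Z_{t+1} - Z_t$ is bounded in absolute value by $O(z) + \kappa$ but has all exponential moments up to the cutoff $\lambda$: the quadrant-crossing displacement has sub-Gaussian-type tails on scale $\sqrt z$ (it is a bounded martingale increment sum of length $O(z)$), and $\kappa$ is exponentially integrable, so $\Exp[|Z_{t+1} - Z_t|^p \mid Z_t = z] = O(z^{p/2})$; translating through $X = \sqrt{Z-1}$ via the mean value theorem gives $\Exp[|\Delta_t|^p \mid \F_t] = O(z^{p/2} \cdot z^{-p/2}) = O(1) = O((1+X_t)^{p - 2 - \delta})$ for any $\delta < p-2$ when $p > 2$ (and trivially for smaller $p$), so in fact (\ref{moms}) holds for every $p>0$ with room to spare.

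The main obstacle is the precise one-step second-moment computation giving $\Var(\tilde Z \mid Z_t = z) \sim \tfrac23 z$ and hence $s^2 = \tfrac16$; everything else (the structural assumptions (A0), (A1), (A3) and the crude moment bound (\ref{moms})) is essentially bookkeeping, and the mean computation giving $c$ is a lower-order refinement of the same calculation. I expect the cleanest route is to identify the embedded quadrant-crossing with a known exactly-solvable quantity from \cite{shu} — the hitting distribution of the reversed/coupled random walk — rather than re-deriving it from scratch, so that the constants $\tfrac16$ and $\tfrac{1-2\Exp[\kappa]}{4}$ drop out of an existing exact formula after Taylor expansion.
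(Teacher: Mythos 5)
Your proposal follows essentially the same route as the paper: the paper first establishes moment estimates for the embedded increments $D_t = Z_{t+1}-Z_t$ (Lemma \ref{shulem}, obtained by comparing with the $\kappa=0$ chain via (\ref{zz}) and quoting the one-quadrant computations of \cite{shu}, which give drift $\tfrac23 - \Exp[\kappa]$ and second moment $\sim \tfrac23 z$), and then transfers these through $x \mapsto \sqrt{x-1}$ by Taylor expansion exactly as Lemma 7.7 of \cite{shu} follows from Lemma 7.6 there, with (A0), (A1), (A3) being the same bookkeeping you describe. Two cosmetic caveats: in the model the $\kappa$-jump occurs at the \emph{start} of the quadrant crossing (cf.\ (\ref{zz})), not at the end, though this does not affect the leading-order constants; and your claim that (\ref{moms}) holds ``trivially'' for $p\le 2$ is not literally true (the conditional $p$-th moment of $\Delta_t$ is of order $1$, not decaying), but only large $p$ is ever needed.
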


Proposition \ref{shuprop} is closely related to Lemma 7.7 in \cite{shu}, but differs slightly as our embedded process $Z_t$ is
not quite the same as the one used in \cite{shu}, so we sketch the proof in Section \ref{shuproofs} below.
For the original process, we are interested in $\tau := \min \{ t \in \N : | A_t | + | B_t| =1 \}$.
For our embedded process, define $\tau_q := \min \{ t \in \N : Z_t = 1 \}$ (where the `$q$' indicates `quadrant time').
 The key relationship between the two processes is that $\tau = \nu_{\tau_q}$, since $|A_t| + |B_t| =1$ if and only if $t = \nu_k$ for some $k$ and $Z_k = 1$.
In \cite{shu}, a slightly different version of the embedded process $Z_t$ (namely, $\tilde Z_k$ defined on p.\ 2125 of \cite{shu}) was used; for that version
the analogous claim to `$\tau = \nu_{\tau_q}$' made just below equation (6) in \cite{shu} is not  correct as stated, although this has no impact on the results in \cite{shu}. It is not hard to fix this small gap in the argument in \cite{shu}, and the variation given in the present paper is just one way of doing so. More importantly, the results of the present
paper enable us to sharpen the results in \cite{shu} and to settle a conjecture made in that paper.

Proposition \ref{shuprop} enables us to determine the tails of $\tau_q$; some additional work is needed
to account for the change of time between $(A_t, B_t)$ and $X_t$ and hence study the tails of $\tau = \nu_{\tau_q}$.
Due to the special structure of the paths of the simple harmonic
urn process, it turns out that exactly relevant to this point is an excursion sum of the type $\xi_1^{(2)}$ defined by (\ref{xidef}).
We prove the following result in Section \ref{shuproofs}. The condition $\Exp[ \kappa ] > \frac{1}{3}$ corresponds
to $r >-1$, in which case the process is {\em recurrent}.

\begin{theo}
\label{shuthm}
Suppose that $\Exp [ \kappa] > \frac{1}{3}$.
Let $p \geq 0$.
Then $\Exp [ \tau^p ] < \infty$ if and only if $p < \frac{3\Exp[\kappa]-1}{2}$. In particular,
the Markov chain $(A_t, B_t)$ is null-recurrent when $\Exp [ \kappa ] =1$.
\end{theo}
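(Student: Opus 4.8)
The plan is to identify the absorption time $\tau=\nu_{\tau_q}$, up to lower-order corrections, with twice the single-excursion sum $\xi_1^{(2)}$ of the half-line process $X_t=\sqrt{Z_t-1}$, and then read off the moment threshold from Theorems~\ref{etalem} and~\ref{xilem}. By Proposition~\ref{shuprop}, $X$ satisfies (A0)--(A3) with $r=6\Exp[\kappa]-3$, and (\ref{moms}) holds for every $p>0$; also $\eta_1=\tau_q-1$ is the first-return time of $X$ to $0$ and $\xi_1^{(2)}=\sum_{t=1}^{\eta_1}X_t^2=\sum_{k=1}^{\eta_1}(Z_k-1)$. The hypothesis $\Exp[\kappa]>\tfrac{1}{3}$ is precisely $r>-1$, so Theorem~\ref{recthm} gives that $X$, and hence $(A_t,B_t)$, is recurrent (so $\tau<\infty$ a.s.); Theorem~\ref{xilem} with $\alpha=2$ gives $\Exp[(\xi_1^{(2)})^q]<\infty$ iff $q<\tfrac{1+r}{4}=\tfrac{3\Exp[\kappa]-1}{2}$, with a matching two-sided tail bound, while Theorem~\ref{etalem} gives the corresponding statement for $\eta_1$ with the larger exponent $\tfrac{1+r}{2}$. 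Thus the target exponent is already in hand, and it remains to transfer it from $\xi_1^{(2)}$ to $\tau$.

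The key step is a deterministic identity for the quadrant-crossing times $D_k:=\nu_k-\nu_{k-1}$. Within any one quadrant the dynamics, at each unit step, decreases one of $|A_t|,|B_t|$ by $1$ and increases the other by $1$, so both coordinates evolve monotonically during a crossing. Hence the $k$th crossing consists of the special jump away from the axis --- after which one coordinate equals $\max\{1,\,Z_{k-1}-\kappa_{\nu_{k-1}}\}$ and the other equals $1$ --- followed by exactly $\max\{1,\,Z_{k-1}-\kappa_{\nu_{k-1}}\}-1$ unit steps driving the first coordinate down to $0$ and $Z_k-1$ unit steps driving the second up to its terminal value $Z_k$, so that
\[ D_k=\max\{1,\,Z_{k-1}-\kappa_{\nu_{k-1}}\}+Z_k . \]
Since $Z_{k-1}\ge1$, writing $\kappa^{+}=\max\{\kappa,0\}$ and $\kappa^{-}=\max\{-\kappa,0\}$ we have $Z_{k-1}-\kappa_{\nu_{k-1}}^{+}\le\max\{1,\,Z_{k-1}-\kappa_{\nu_{k-1}}\}\le Z_{k-1}+\kappa_{\nu_{k-1}}^{-}$. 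Summing over the crossings making up the first excursion and telescoping (using $Z=1$ at both endpoints of the excursion, so $\sum_k Z_{k-1}=\sum_k Z_k=\eta_1+\xi_1^{(2)}$) yields
\[ 2\bigl(\eta_1+\xi_1^{(2)}\bigr)-\Sigma^{+}\ \le\ \tau\ \le\ 2\bigl(\eta_1+\xi_1^{(2)}\bigr)+1+\Sigma^{-},\qquad \Sigma^{\pm}:=\sum_{k}\kappa^{\pm}_{\nu_{k-1}}, \]
the sums running over the crossings of the excursion.

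It remains to show that $\Sigma^{\pm}$ does not affect the moment threshold. The variables $(\kappa_{\nu_{k-1}})_k$ are i.i.d.\ copies of $\kappa$, each independent of the history up to the corresponding axis visit, so $\Sigma^{\pm}$ is a random walk with i.i.d.\ nonnegative, exponentially integrable increments, stopped at $\eta_1$. Splitting on $\{\eta_1\ge\epsilon_0 y\}$ and applying a Chernoff bound on the complement, where $\Sigma^{\pm}$ is dominated by a fixed-length walk far above its mean, one obtains for small $\epsilon_0>0$
\[ \Pr[\Sigma^{\pm}\ge y]\ \le\ \Pr[\eta_1\ge\epsilon_0 y]+\re^{-cy}\ \le\ C\,y^{-(1+r)/2+\eps}\]
by Theorem~\ref{etalem}. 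Feeding this into the two-sided bound above: for the upper bound, $\Pr[\tau\ge x]\le\Pr[\eta_1\ge\tfrac{x}{8}]+\Pr[\xi_1^{(2)}\ge\tfrac{x}{8}]+\Pr[\Sigma^{-}\ge\tfrac{x}{4}]\le Cx^{-(1+r)/4+\eps}$, the $\xi_1^{(2)}$-term dominating since $\tfrac{1+r}{2}>\tfrac{1+r}{4}$; for the lower bound, $\Pr[\tau\ge x]\ge\Pr[\xi_1^{(2)}\ge x]-\Pr[\Sigma^{+}>x]\ge\tfrac{1}{2}x^{-(1+r)/4}(\log x)^{-\eps}$ for all large $x$ by Theorem~\ref{xilem}, since $\Pr[\Sigma^{+}>x]$ is of strictly smaller order. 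Integrating these tails gives $\Exp[\tau^p]<\infty$ iff $p<\tfrac{1+r}{4}=\tfrac{3\Exp[\kappa]-1}{2}$; when $\Exp[\kappa]=1$ this threshold equals $1$, so $\Exp[\tau]=\infty$ although $\tau<\infty$ a.s., i.e.\ $(A_t,B_t)$ is null-recurrent. The main obstacle is the first bookkeeping step: a fresh $\kappa$-jump is made at every visit to an axis, and in the recurrent case there are a.s.\ infinitely many such visits (infinitely many in expectation in the null-recurrent regime), so a priori these jumps could accumulate and change the order of $\tau$; the resolution is that the strict monotonicity of each coordinate within a quadrant localizes the effect of the $k$th jump to the single crossing $D_k$, which produces the exact formula above and reduces the whole problem to the already-established behaviour of $\xi_1^{(2)}$.
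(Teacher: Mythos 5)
Your argument is correct and is essentially the proof the paper intends: the paper's own treatment is a brief sketch deferring to the proof of Theorem 2.6 of \cite{shu}, based on exactly your identification that each quadrant crossing takes about $Z_{k-1}+Z_k$ steps, so that $\tau\approx 2(\eta_1+\xi_1^{(2)})$ up to small $\kappa$-corrections, followed by an appeal to the sharper Theorem \ref{xilem} (with Theorem \ref{etalem}) to get the threshold $\frac{1+r}{4}=\frac{3\Exp[\kappa]-1}{2}$. Your version simply makes this self-contained (the exact crossing-time formula and the Chernoff-plus-$\eta_1$-tail control of $\Sigma^{\pm}$); the only blemish is a harmless off-by-one in the verbal step count (driving the truncated coordinate from $\max\{1,Z_{k-1}-\kappa\}$ down to $0$ takes that many steps, not one fewer), which affects neither your stated identity $D_k=\max\{1,Z_{k-1}-\kappa_{\nu_{k-1}}\}+Z_k$ nor any exponent.
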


This result shows that $\Exp[\tau ^p]=\infty$ for $p = \frac{3\Exp[\kappa]-1}{2}$,
the boundary case not covered by Theorem 2.6 of \cite{shu}; the fact that the process
is {\em null-}recurrent when $\Exp[\kappa]=1$ confirms the conjecture after Corollary 2.7 in \cite{shu}.
Theorem \ref{shuthm} has the following immediate corollary, which fills the gap in Theorem 2.3 of \cite{shu}.

\begin{cor}
For the leaky   urn, the  time to absorption is non-integrable.
\end{cor}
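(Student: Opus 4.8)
The plan is to obtain this corollary as an immediate specialization of Theorem \ref{shuthm}. The leaky urn corresponds to the deterministic choice $\kappa = 1$ almost surely; then $\kappa$ is bounded, so the standing hypothesis $\Exp[\re^{\lambda |\kappa|}] < \infty$ holds trivially, and $\Exp[\kappa] = 1 > \frac13$, so the assumptions of Theorem \ref{shuthm} are satisfied and the chain $(A_t,B_t)$ is recurrent.

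Substituting $\Exp[\kappa] = 1$ into the critical exponent $\frac{3\Exp[\kappa]-1}{2}$ appearing in Theorem \ref{shuthm} yields the value $1$. Hence $\Exp[\tau^p] < \infty$ if and only if $p < 1$; in particular, taking $p=1$, we get $\Exp[\tau] = \infty$.

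Finally, one identifies $\tau$ with the absorption time of the leaky urn: as recalled when the model was introduced, the leaky urn of \cite{shu} is defined with an absorbing state on $\{|A_t|+|B_t|=1\}$, and its trajectory agrees with the dynamics studied here up to the first time that set is reached, namely $\tau = \min\{t : |A_t|+|B_t| = 1\}$. Thus the time to absorption has the law of $\tau$ and is non-integrable. There is no genuine obstacle here: all the analytic work sits in Theorem \ref{shuthm}, and the corollary is simply the evaluation of its threshold at $\Exp[\kappa] = 1$ (so that the threshold equals $1$) together with the bookkeeping that $\tau$ really is the leaky urn's absorption time.
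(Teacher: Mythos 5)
Your proposal is correct and matches the paper's intended argument: the corollary is stated as an immediate consequence of Theorem \ref{shuthm}, obtained exactly as you do by taking $\kappa=1$ a.s., so $\Exp[\kappa]=1$, the threshold $\frac{3\Exp[\kappa]-1}{2}$ equals $1$, and hence $\Exp[\tau]=\infty$, with $\tau$ identified with the leaky urn's absorption time since the two definitions coincide up to that time.
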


\subsection{Random walk models of polymers and interfaces}
\label{sec:poly}

The last decade or so has seen   renewed interest in one-dimensional random walks with
asymptotically zero drifts from a statistical physics perspective, concerning models of random polymers and
interfaces,
their structure, and their interactions with a medium or boundary. In the context of random
polymers, the path of the process models the physical polymer chain; the asymptotically zero
drift indicates the presence of {\em long-range interaction} with a boundary, which can be either
attractive or repulsive. For a random interface, the walk models the behaviour
of a liquid interface on a solid substrate (including {\em wetting} and {\em pinning} phenomena);
in this context the drift may represent {\em affinity} for the boundary.
We refer to \cite{holl,giac,velenik} for recent surveys.

Much of the existing work  is restricted to nearest-neighbour random walks on $\Z^+$,
where explicit calculations are facilitated by reversibility and associated algebraic structure (such
as Karlin--McGregor theory \cite{karlin}); see e.g.\ \cite{alexander,dec,huillet} for
models inspired directly by random polymers, and e.g.\ \cite{cfr,gallardo,voit90} for
related work. In this section we make some brief remarks emphasizing how the present paper adds
to this literature, and in particular how our results can be used to study quantities of interest in this context
for a much more general class of processes; our results not only do not require the nearest-neighbour assumption, but do not need bounded jumps
or even the Markov property {\em per se}.

A typical family of nearest-neighbour random walks $X_t$ on $\Z^+$ that has been extensively studied has
$\Pr (X_{t+1} = X_t \pm 1 \mid X_t = x) = \frac{1}{2} \mp \frac{\delta}{4x + 2 \delta}$ for $x>0$ and a parameter $\delta$;
here
  (A2) holds with $c = -\delta/2$ and $s^2 =1$, so $r = \delta$. This and closely related models were considered by Karlin and McGregor \cite{karlin},
and  by many subsequent authors,
including for instance \cite{rosenkrantz,fal71,fal78,fal81,szekely,gallardo,voit90} and, most recently  \cite{dec} and   \cite{huillet}.
In these very special cases, Fal' \cite{fal71} gives asymptotics for excursion times and the number of excursions (cf our Theorem \ref{numberthm}),
while several authors \cite{rosenkrantz,fal81,szekely,gallardo,voit90} give iterated-logarithm type upper bounds in the diffusive case (cf our Theorem \ref{xbounds}(i)).
Huillet \cite{huillet} gives sharper versions of our Theorems \ref{lem4}, \ref{etalem}, and \ref{xbounds}
in this special case: see Propositions 2, 9, 10, and 11 of \cite{huillet}.
The main result of \cite{dec} (see also Proposition 15 of \cite{huillet}) is that, for
$\delta \in (1,2)$, $\Exp [ X_t] \sim K_\delta t^{1 - \frac{\delta}{2}}$, being one possible measure of the spatial extent of the polymer. Perhaps more natural (certainly more
readily interpreted in terms of path properties) are the
quantities $\max_{1 \leq s \leq t} X_s$ and $t^{-1} \sum_{s=1}^t X_s$ that we study in the present paper;
 their scaling exponents for the case $\delta \in (1,2)$ are $\frac{1}{1+\delta}$ (our Theorem \ref{xbounds},
 or Proposition 10 of \cite{huillet}) and $\frac{2-\delta}{1+\delta}$ (our Corollary \ref{gthm}) respectively.
 Note that for $\delta \in (1,2)$,
$\frac{1}{1+\delta} > 1 - \frac{\delta}{2} > \frac{2-\delta}{1+\delta}$.
 Alexander \cite{alexander} calls such nearest-neighbour random walks with drift $O (1/x)$  at $x$ `Bessel-like',
 and gives sharp results on the asymptotics of return times, among  other things.
  There seems to have as yet been no success in applying the methods of \cite{alexander,huillet,dec} beyond the
 nearest-neighbour setting.

 \section{Proofs of main results}
 \label{sec:proofs}

   \subsection{Lyapunov functions}
   \label{sec:lyap}

   For $\gamma, \nu \in \R$ we define the function $f_{\gamma,\nu}:[0,\infty) \to [0,\infty)$
by   \[ f_{\gamma,\nu} (x) := (\re+x)^\gamma \log^\nu (\re+x) . \]
    Our basic analytical method
   will be built on the fact that $f_{\gamma,\nu} (X_t)$ is a submartingale
   or supermartingale, for $X_t$ outside some bounded set,
   for appropriate $\gamma$ and $\nu$. The following result
   is fundamental. The idea here is not new, although the particular form of the result
   is a little different from previous versions in the literature.
Recall that in expressions such as (\ref{semieq4}), the $o(1)$ term is uniform in $t$ and $\omega$.

    \begin{lm}
   \label{semilem}
   Suppose that (A0) and (A2) hold,  $r > -1$,
    and   (\ref{moms}) holds for some
   $p>\max\{2,1+r\}$. Then for any $\nu \in \R$, as $X_t \to \infty$, a.s.,
   \begin{align}
\label{semieq4}
 \Exp [    f_{1+r,\nu} (X_{t+1}  ) - f_{1+r, \nu} (X_t)   \mid \F_t ]
 =  \left( \nu (1+r) (s^2/2)   + o(1) \right) X_t^{r-1} \log ^{\nu -1} X_t .\end{align}
In particular,  for any $\nu >0$,  there exists $A < \infty$
   such that,   on $\{ X_t \geq A \}$, a.s.,
   \begin{align*}
   \Exp [ f_{1+r,\nu} (X_{t+1} ) - f_{1+r, \nu} (X_t ) \mid \F_t ] & \geq 0 ;\\
    \Exp [ f_{1+r,-\nu} (X_{t+1} ) - f_{1+r, -\nu} (X_t ) \mid \F_t ] & \leq 0 .
    \end{align*}
\end{lm}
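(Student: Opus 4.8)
\subsection*{Proof plan for Lemma \ref{semilem}}

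The plan is to Taylor-expand $f_{1+r,\nu}$ to second order along the increment $\Delta_t = X_{t+1}-X_t$ and identify the surviving term after taking conditional expectations. Write $f := f_{1+r,\nu}$, $\gamma := 1+r$ and $u := \re + X_t$. A direct differentiation gives $f'(x) = \gamma u^{\gamma-1}\log^\nu u + \nu u^{\gamma-1}\log^{\nu-1}u$ and $f''(x) = \gamma(\gamma-1)u^{\gamma-2}\log^\nu u + \nu(2\gamma-1)u^{\gamma-2}\log^{\nu-1}u + \nu(\nu-1)u^{\gamma-2}\log^{\nu-2}u$. First I would note that (\ref{mu1})--(\ref{mu2}) can be rewritten with $u$ in place of $X_t$, since $X_t^{-1}\log^{-1}X_t$ and $u^{-1}\log^{-1}u$ (resp.\ $\log^{-1}X_t$ and $\log^{-1}u$) agree up to lower-order terms. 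Substituting into $f'(X_t)\Exp[\Delta_t\mid\F_t] + \tfrac12 f''(X_t)\Exp[\Delta_t^2\mid\F_t]$ then gives $\big(\gamma c + \tfrac12\gamma(\gamma-1)s^2\big) u^{\gamma-2}\log^\nu u + \big(\nu c + \tfrac12\nu(2\gamma-1)s^2\big) u^{\gamma-2}\log^{\nu-1}u + o(u^{\gamma-2}\log^{\nu-1}u)$. The key point is that the coefficient of $\log^\nu u$ equals $(1+r)\big(c + \tfrac12 r s^2\big)$, which vanishes precisely because $r = -2c/s^2$ by (\ref{rdef}); and the coefficient of the surviving $\log^{\nu-1}u$ term simplifies, again using $c = -rs^2/2$, to $\nu(1+r)(s^2/2)$. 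Since $u^{\gamma-2}\log^{\nu-1}u = X_t^{r-1}\log^{\nu-1}X_t(1+o(1))$, this is exactly the main term claimed in (\ref{semieq4}).

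The substantive work is then to show that the Taylor remainder $R_t := f(X_{t+1}) - f(X_t) - f'(X_t)\Delta_t - \tfrac12 f''(X_t)\Delta_t^2$ satisfies $\Exp[|R_t|\mid\F_t] = o(X_t^{r-1}\log^{\nu-1}X_t)$, and here the truncation scale must be chosen carefully. I would split on $\{|\Delta_t| \le X_t^{1-\eps_1}\}$ and its complement, with $\eps_1>0$ to be fixed small in terms of $p$, $\delta$ and $r$. On the small-jump event, $X_{t+1}$ lies in $[X_t/2, 2X_t]$ for $X_t$ large, so the Lagrange form gives $|R_t| \le \tfrac16 |f'''(\theta)|\,|\Delta_t|^3$ with $|f'''(\theta)| \le C X_t^{r-2}\log^{|\nu|+3}X_t$ uniformly in the relevant $\theta$; bounding $|\Delta_t|^3 \le X_t^{1-\eps_1}\Delta_t^2$ on this event and using that $\Exp[\Delta_t^2\mid\F_t]$ is bounded for $X_t$ large (by (\ref{mu2})), this contribution is $O(X_t^{r-1-\eps_1}\log^{|\nu|+3}X_t) = o(X_t^{r-1}\log^{\nu-1}X_t)$ --- the polynomial gap $X_t^{-\eps_1}$ being exactly what absorbs the extra logarithmic factors coming from $f'''$ (a cutoff at $X_t/2$ would not suffice here). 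On the large-jump event I would instead use $|R_t| \le |f(X_{t+1})| + |f(X_t)| + |f'(X_t)||\Delta_t| + \tfrac12 |f''(X_t)|\Delta_t^2$ and estimate each term against (\ref{moms}) via inequalities of the form $|\Delta_t|^q\1\{|\Delta_t|>h\} \le h^{q-p}|\Delta_t|^p$ (for $q\le p$) together with $\Pr[|\Delta_t|>h\mid\F_t] \le h^{-p}\Exp[|\Delta_t|^p\mid\F_t]$; with $h = X_t^{1-\eps_1}$ each resulting exponent has the form $r-1-\delta+\eps_1(p-1)$ or similar, hence is $o(X_t^{r-1}\log^{\nu-1}X_t)$ once $\eps_1$ is small enough. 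The one delicate term is $|f(X_{t+1})|\1\{|\Delta_t|>h\}$: here I would bound $|f(X_{t+1})| \le C(1+X_t)^{1+r+\eps'} + C(1+|\Delta_t|)^{1+r+\eps'}$, and it is precisely at this point that $p>1+r$ is used, so that $1+r+\eps'<p$ for small $\eps'$ and the truncation inequality applies.

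Finally, the ``in particular'' statement is immediate from (\ref{semieq4}): for $\nu>0$ the bracket $\nu(1+r)(s^2/2)$ is strictly positive, so for $X_t$ large the conditional expectation in (\ref{semieq4}) is nonnegative, giving the submartingale bound off a bounded set; replacing $\nu$ by $-\nu$ in (\ref{semieq4}) flips the sign of the bracket and yields the supermartingale bound for $f_{1+r,-\nu}$. I expect the main obstacle to be organising the large-jump remainder estimate so that a single choice of $\eps_1$ handles all of those terms simultaneously, and checking that the hypothesis $p>\max\{2,1+r\}$ is exactly what is needed --- the $p>2$ for the $\Delta_t^2$ and $f''$ terms and for $\Exp[\Delta_t^2\mid\F_t]<\infty$, and the $p>1+r$ for the $|f(X_{t+1})|$ term.
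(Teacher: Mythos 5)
Your proposal is correct and follows essentially the same route as the paper: a second/third-order Taylor expansion of $f_{1+r,\nu}$, truncation of the increment at the scale $X_t^{1-\eps}$, control of the large-jump contribution through the moment bound (\ref{moms}) via inequalities of the form $|\Delta_t|^q\1\{|\Delta_t|>h\}\leq h^{q-p}|\Delta_t|^p$ (the paper's Lemma \ref{deltail}), the cancellation of the $\log^\nu$ coefficient from $r=-2c/s^2$, and the use of $p>1+r$ exactly for the $|f(X_{t+1})|$ term off the small-jump event. The only difference is organisational (you keep the exact conditional moments in the main terms and push all truncation effects into the remainder, whereas the paper truncates first and then corrects the moments), which does not change the substance.
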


Before  proving Lemma \ref{semilem}, we state a technical result.
 For $\eps \in (0,1)$, let $E_\eps(t)$ denote the  event
    $\{ | \Delta_t | \leq (1+X_t)^{1-\eps} \}$. Denote
    the complementary event by $E^{\rm c}_\eps(t)$.

   \begin{lm}
   \label{deltail}
   Suppose that (\ref{moms}) holds with $p>2$ and $\delta >0$. Then for some $C \in (0,\infty)$ and
   any $\eps \in (0, \frac{\delta}{1+p})$, for any
   $q \in [0,p]$,
   $\Exp [ | \Delta_t |^q \1_{E_\eps^{\rm c} (t)} \mid \F_t ] \leq C (1+X_t)^{q - 2- \eps}$, a.s.
   \end{lm}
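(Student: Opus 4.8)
The plan is to decompose the expectation $\Exp [ |\Delta_t|^q \1_{E_\eps^{\rm c}(t)} \mid \F_t ]$ dyadically over the scales on which $|\Delta_t|$ lives, and on each scale trade factors of $|\Delta_t|$ against the moment bound (\ref{moms}) via Markov's inequality. Concretely, fix $\eps \in (0, \frac{\delta}{1+p})$ and write $Y := 1+X_t$. On the event $E_\eps^{\rm c}(t) = \{ |\Delta_t| > Y^{1-\eps} \}$ I would partition according to $\{ 2^k Y^{1-\eps} < |\Delta_t| \leq 2^{k+1} Y^{1-\eps} \}$ for $k \geq 0$, so that
\[ \Exp [ |\Delta_t|^q \1_{E_\eps^{\rm c}(t)} \mid \F_t ] \leq \sum_{k \geq 0} (2^{k+1} Y^{1-\eps})^q \, \Pr [ |\Delta_t| > 2^k Y^{1-\eps} \mid \F_t ] . \]
By Markov's inequality applied with the $p$th moment, $\Pr [ |\Delta_t| > 2^k Y^{1-\eps} \mid \F_t ] \leq 2^{-kp} Y^{-(1-\eps)p} \Exp [ |\Delta_t|^p \mid \F_t ] \leq C 2^{-kp} Y^{-(1-\eps)p} Y^{p-2-\delta}$ using (\ref{moms}).

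Substituting this bound back into the sum, the $k$-dependent factor is $2^{(k+1)q} 2^{-kp} = 2^q 2^{k(q-p)}$, which is summable in $k \geq 0$ precisely because $q \leq p$ (and strictly summable when $q < p$; the borderline $q = p$ can be absorbed into the spatial exponent as I explain below, or handled by noting the $k=0$ term already dominates up to the spatial power). The resulting spatial exponent is $q(1-\eps) - (1-\eps)p + p - 2 - \delta = q - 2 - \eps q + (\eps p - \delta)$. Since $q \leq p$ we have $\eps q \geq 0$ and $\eps p - \delta < 0$ (because $\eps < \delta/(1+p) < \delta/p$), so this exponent is at most $q - 2 - \eps p + (\eps p - \delta) \le q-2-\delta < q - 2 - \eps$ for the relevant range; more carefully, to get exactly the stated exponent $q-2-\eps$ I would observe $q - 2 - \eps q - (\delta - \eps p) \le q - 2 - \eps$ whenever $\eps q + \delta - \eps p \ge \eps$, i.e. $\delta \ge \eps(1 + q - p) $, which holds since the right side is $\le \eps \le \delta/(1+p) \cdot 1 \le \delta$. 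Collecting the summable geometric factor and this spatial bound gives $\Exp [ |\Delta_t|^q \1_{E_\eps^{\rm c}(t)} \mid \F_t ] \leq C' (1+X_t)^{q-2-\eps}$ for a new constant $C'$ depending only on $C, p, \delta$, as required.

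The only mild subtlety—what I would call the main obstacle, though it is minor—is bookkeeping the exponents so that the final spatial power is exactly $q - 2 - \eps$ uniformly over $q \in [0,p]$ and over the admissible range of $\eps$, rather than something slightly better or worse; this amounts to checking the inequality $\delta - \eps p \ge \eps q - \eps \cdot \mathbf{1}$-type relations above, all of which follow from $\eps < \delta/(1+p)$. One should also note that the constant $C$ in the statement is allowed to depend on $p$ and $\delta$ but not on $t$, $q$, or $\omega$, which is manifest from the construction since (\ref{moms}) holds uniformly in $t$ with a non-random constant and Markov's inequality introduces nothing $\omega$-dependent. No appeal to any earlier result is needed beyond the hypothesis (\ref{moms}) itself.
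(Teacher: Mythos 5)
Your route is genuinely different from the paper's, and it is worth comparing. The paper disposes of the lemma in two lines: on $E_\eps^{\rm c}(t)$ one has $|\Delta_t|^q=|\Delta_t|^p|\Delta_t|^{q-p}\leq |\Delta_t|^p(1+X_t)^{(1-\eps)(q-p)}$ (using $q\leq p$ and $|\Delta_t|>(1+X_t)^{1-\eps}$), and taking conditional expectations with (\ref{moms}) gives the exponent $p-2-\delta+(1-\eps)(q-p)$, which is $\leq q-2-\eps$ because $\eps(1+p-q)\leq\eps(1+p)<\delta$; the constant is literally the $C$ of (\ref{moms}), uniformly in $q$ and $\eps$. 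Your dyadic decomposition with a shell-wise Markov inequality encodes the same trade-off and your spatial-exponent computation $q-2-\eps q+(\eps p-\delta)$ is correct, but the extra machinery costs you exactly what the statement asks for: the geometric factor $\sum_{k\geq 0}2^{k(q-p)}=(1-2^{q-p})^{-1}$ diverges as $q\uparrow p$ and is infinite at $q=p$, so your final constant is \emph{not} one ``depending only on $C,p,\delta$'' and you do not obtain a single $C$ valid for all $q\in[0,p]$, as the lemma's quantifier order requires. The endpoint $q=p$ is trivial without any decomposition ($\Exp[|\Delta_t|^p\1_{E_\eps^{\rm c}(t)}\mid\F_t]\leq C(1+X_t)^{p-2-\delta}\leq C(1+X_t)^{p-2-\eps}$ since $\eps<\delta$), but to restore a uniform constant for $q$ near $p$ you would in effect have to fall back on the pointwise bound above, which is the paper's whole proof; so the cleanest repair is simply to use it from the start.

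There is also a slip in your final bookkeeping. The inequality you need is $\delta\geq\eps(1+p-q)$, not $\delta\geq\eps(1+q-p)$, and your justification (``the right side is $\leq\eps$'') only applies to the mis-stated version, since $1+q-p\leq 1$; for the correct version the right side can be as large as $\eps(1+p)$ (at $q=0$), and it is precisely the hypothesis $\eps<\delta/(1+p)$ that rescues it. Similarly, your earlier chain ``this exponent is at most $q-2-\eps p+(\eps p-\delta)\leq q-2-\delta$'' has the inequality the wrong way round, because $-\eps q\geq-\eps p$ for $q\leq p$ (indeed at $q<p$ the exponent is strictly larger than $q-2-\delta$). Both points are repairable, and for each fixed $q<p$ your argument does prove the bound, but as written the uniformity claim and the exponent check do not go through verbatim.
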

   \begin{proof}
   For $q \in [0,p]$, $|\Delta_t |^q \1_{E_\eps^{\rm c} (t)} = | \Delta_t |^p | \Delta_t |^{q-p} \1_{E_\eps^{\rm c} (t)} \leq
   | \Delta_t |^p (1+ X_t)^{(1-\eps)(q-p)}$, by definition of $E_\eps (t)$.
   Taking expectations and using (\ref{moms}) we obtain
   \[ \Exp [ | \Delta_t |^q \1_{E_\eps^{\rm c} (t)} \mid \F_t ] \leq C (1+X_t)^{p-2-\delta+(1-\eps)(q-p)} ,\]
   and the result follows.
    \end{proof}

   \begin{proof}[Proof of Lemma \ref{semilem}.]
   Let $\gamma \geq 0$ and $\nu \in \R$.    Take $\eps \in (0,1)$.
   We estimate the expected increment of $f_{\gamma, \nu} (X_t)$
   using a Taylor expansion on $E_\eps(t)$, while we use Lemma \ref{deltail} to control
   the expectation on $E^{\rm c}_\eps(t)$. Differentiation of $f_{\gamma,\nu}$ with respect to $x$
   gives
   \begin{align*}
   f'_{\gamma,\nu} (x ) & = \gamma f_{\gamma-1,\nu} (x) + \nu f_{\gamma-1,\nu-1} (x) ; \\
f''_{\gamma,\nu} (x) & = \gamma (\gamma-1) f_{\gamma -2,\nu} (x) + \nu (2\gamma-1) f_{\gamma-2,\nu-1} (x) + \nu (\nu-1)
f_{\gamma-2,\nu-2} (x) ;\end{align*}
and $f'''_{\gamma,\nu}(x) = O ( x^{\gamma-3} \log^\nu x)$.
Thus Taylor's formula implies that
   \begin{align}
   \label{eq20}
 & ~~  \left( f_{\gamma,\nu} (X_{t} + \Delta_t ) - f_{\gamma, \nu} (X_t) \right)\1_{E_\eps (t)} \nonumber\\
 &
   = \Delta_t \1_{E_\eps(t)} f_{\gamma-1,\nu-1} (X_t) ( \gamma \log (\re+X_t) + \nu ) \nonumber\\
& ~~ {}  + \frac{\Delta_t^2 \1_{E_\eps (t)}}{2} f_{\gamma-2,\nu-2} ( X_t)
 \left( \gamma (\gamma -1) \log^2 (\re+X_t) + \nu (2\gamma -1) \log (\re+X_t) + \nu (\nu -1) \right)   \nonumber\\
& ~~  {} + O ( | \Delta_t|^3 \1_{E_\eps (t)} X_t^{\gamma -3} \log^\nu X_t ) ,\end{align}
as $X_t \to \infty$.
   Since $| \Delta_t |\1_{E_\eps (t)} = O ( X_t^{1-\eps} )$, here we have that
   \[ \Exp [ | \Delta_t|^3 \1_{E_\eps (t)} X_t^{\gamma -3} \log^\nu X_t \mid \F_t ]
   \leq \Exp[ | \Delta_t |^2  \mid \F_t ] O ( X_t^{\gamma -2 -\eps} \log^\nu X_t ) = O ( X_t^{\gamma -2 - ( \eps/2)} ) , \as, \]
     by (\ref{mu2}). On the other hand, since (\ref{moms}) holds for some $p>2$,
   we have from the $q \in \{1,2\}$ cases of Lemma \ref{deltail} that, for $\eps >0$ small enough, a.s.,
   \begin{align*} \Exp [ \Delta_t \1_{E_\eps (t)} \mid \F_t ] & = \Exp [ \Delta_t \mid \F_t] + O ( X_t^{-1-\eps} ) ; \\
   \Exp [ \Delta_t^2 \1_{E_\eps (t)} \mid \F_t ] & = \Exp [ \Delta_t^2 \mid \F_t] + O ( X_t^{-\eps} ) .\end{align*}
Taking expectations in (\ref{eq20}) and using (\ref{mu1}) and (\ref{mu2})
   we obtain, for $\eps>0$ small enough,
\begin{align}
\label{eq29} & ~~ \Exp [  \left( f_{\gamma,\nu} (X_{t} + \Delta_t ) - f_{\gamma, \nu} (X_t) \right) \1_{E_\eps (t)}  \mid \F_t ] \nonumber\\
& = \gamma \left( c + \frac{(\gamma-1)s^2}{2} \right) X_t^{\gamma -2} \log^\nu X_t
+ \nu \left( c + \frac{(2\gamma -1) s^2}{2} + o(1) \right) X_t^{\gamma-2} \log ^{\nu -1} X_t ,\end{align}
as $X_t \to \infty$.
On the other hand, for any $\eps' >0$
there exists $C<\infty$  for which
\[ \left| f_{\gamma,\nu} (X_{t} + \Delta_t ) - f_{\gamma, \nu} (X_t) \right|
\leq C (1 + X_t )^{\gamma + \eps'} + C   | \Delta_t |  ^{\gamma + \eps'} .\]
Hence
$\Exp [  \left| f_{\gamma,\nu} (X_{t} + \Delta_t ) - f_{\gamma, \nu} (X_t) \right| \1_{E^{\rm c}_\eps(t)}  \mid \F_t ]$
is bounded above by \[
  C ( 1 + X_t )^{\gamma + \eps'} \Pr [ E_\eps^{\rm c} (t)   \mid \F_t]
+ C \Exp [ | \Delta_t |  ^{\gamma + \eps'} \1_{E^{\rm c}_\eps(t)}   \mid \F_t ] .\]
For $\eps>0$ small enough,
both terms on the right-hand side here are $O( X_t^{\gamma+\eps'-2-\eps})$,
by the $q=0$ and $q=\gamma +\eps'$ cases of Lemma \ref{deltail} respectively,
the latter case being applicable provided (\ref{moms}) holds for $p > \gamma$ and taking $\eps' \in (0,p-\gamma)$.
 Taking $\eps'$ small enough ($\eps' < \eps/2$, say) and
 combining this last estimate with (\ref{eq29}) we obtain, as $X_t \to \infty$,
\begin{align}
 \label{semieq}
 & ~~ \Exp [    f_{\gamma,\nu} (X_{t} + \Delta_t ) - f_{\gamma, \nu} (X_t)   \mid \F_t ] \nonumber\\
& = \gamma \left( c + \frac{(\gamma-1)s^2}{2} \right) X_t^{\gamma -2} \log^\nu X_t
+ \nu \left( c + \frac{(2\gamma -1) s^2}{2} + o(1) \right) X_t^{\gamma-2} \log ^{\nu -1} X_t ,\end{align}
provided (\ref{moms}) holds for $p > \gamma$.
With the choice $\gamma = 1 +r = 1 - (2c/s^2)$,  (\ref{semieq}) implies (\ref{semieq4})
since $( c + (2 \gamma -1)(s^2/2) ) = (1+r) s^2 /2$ for this choice of $\gamma$.
 Since $r > -1$ and $s^2 >0$, the right-hand side of (\ref{semieq4}) has  the same sign as $\nu$,
 for all $X_t$ large enough, and the conclusion of the lemma follows.
   \end{proof}

\subsection{Technical lemmas}

 We need some results on sums and maxima of i.i.d.\ random variables; first, maxima.

\begin{lm}
\label{lem7}
Let $\zeta_1, \zeta_2, \ldots$ be i.i.d.\ $\R$-valued random variables.
\begin{itemize}
\item[(i)] Suppose that, for some $\theta \in (0,\infty)$ and $\phi \in \R$,
\begin{equation}
\label{iid1}
 \limsup_{x \to \infty} (  x^\theta (\log x)^{-\phi}  \Pr [ \zeta_1 \geq x ] ) < \infty .\end{equation}
For any $\eps >0$,
a.s., for all but finitely many $n$,
$ \max_{1 \leq i \leq n} \zeta_i \leq n^{\frac 1\theta} (\log n)^{\frac{\phi+1}{\theta} +\eps}$.
\item[(ii)]
Suppose that,  for some $\theta \in (0,\infty)$ and $\phi \in \R$,
\begin{equation}
\label{iid2}
 \liminf_{x \to \infty} (  x^\theta (\log x)^{-\phi}  \Pr [ \zeta_1 \geq x ] ) > 0 .\end{equation}
For any $\eps >0$,
a.s., for all but finitely many $n$,
$\max_{1 \leq i \leq n} \zeta_i \geq n^{\frac 1\theta} (\log n)^{\frac{\phi-1}{\theta} -\eps}$.
\end{itemize}
\end{lm}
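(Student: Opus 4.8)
The plan is to prove both parts by the Borel--Cantelli lemma, the only genuine subtlety being a thinning argument needed in part (i).

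\textbf{Part (i).} From (\ref{iid1}) I would fix $C<\infty$ and $x_0$ with $\Pr [\zeta_1 \ge x] \le C x^{-\theta}(\log x)^\phi$ for all $x \ge x_0$, and write $M_n := \max_{1\le i\le n}\zeta_i$ and $a_n := n^{1/\theta}(\log n)^{(\phi+1)/\theta+\eps}$; note $a_n \to \infty$ and (since $1/\theta>0$) $a_n$ is eventually nondecreasing, while $n\mapsto M_n$ is nondecreasing. A union bound together with $\log a_n = \theta^{-1}\log n\,(1+o(1))$, so that $(\log a_n)^\phi = O((\log n)^\phi)$, gives $\Pr [M_n > a_n] \le n\Pr [\zeta_1 > a_n] = O((\log n)^{-1-\theta\eps})$, which is \emph{not} summable in $n$. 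The fix is to work along the geometric subsequence $n_k := 2^k$: there
\[ \Pr [M_{n_{k+1}} > a_{n_k}] \le n_{k+1}\Pr [\zeta_1 > a_{n_k}] = O\!\left( 2^{k+1}\cdot 2^{-k} k^{-1-\theta\eps}\right) = O(k^{-1-\theta\eps}), \]
which is summable since $\theta\eps>0$, so Borel--Cantelli gives $M_{n_{k+1}} \le a_{n_k}$ for all but finitely many $k$, a.s. For any $n$ with $n_k \le n < n_{k+1}$ and $n$ large one then has $M_n \le M_{n_{k+1}} \le a_{n_k} \le a_n$ by monotonicity of $M$ and of $a$, which yields the claim.

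\textbf{Part (ii).} From (\ref{iid2}) I would fix $c>0$ and $x_0$ with $\Pr [\zeta_1 \ge x] \ge c x^{-\theta}(\log x)^\phi$ for $x \ge x_0$, and set $b_n := n^{1/\theta}(\log n)^{(\phi-1)/\theta-\eps}$, again with $b_n \to \infty$. Independence and $1-u\le \re^{-u}$ give
\[ \Pr [M_n < b_n] = \Pr [\zeta_1 < b_n]^n \le \exp\!\left( -n\Pr [\zeta_1\ge b_n]\right). \]
As in part (i), $(\log b_n)^\phi$ is comparable to $(\log n)^\phi$ and $b_n^{-\theta} = n^{-1}(\log n)^{1-\phi+\theta\eps}$, so for $n$ large $n\Pr [\zeta_1 \ge b_n] \ge c'(\log n)^{1+\theta\eps}$ for some $c'>0$; since $\theta\eps>0$ this exceeds $2\log n$ for all large $n$, whence $\Pr [M_n < b_n] \le n^{-2}$ eventually. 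This is summable, and Borel--Cantelli gives $M_n \ge b_n$ for all but finitely many $n$, a.s.

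The hard part, such as it is, lies in recognising that the first-moment bound in part (i) is too weak to be summed directly over $n$: one must thin to a sparse subsequence and exploit monotonicity of the running maximum to interpolate. Part (ii) needs no such device because the complementary bound on the minimum of the i.i.d.\ block is exponentially (not merely polynomially) small. Everything else is routine manipulation of the polynomial-times-logarithm tail; the argument uses nothing about the joint law of the $\zeta_i$ beyond independence (and, for part (ii), that $\zeta_1$ is unbounded above, which is automatic under (\ref{iid2})).
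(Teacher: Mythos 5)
Your proof is correct and follows essentially the same route as the paper: a first-moment (union/product) tail bound at scale $n$, which in part (i) is only of order $(\log n)^{-1-\theta\eps}$ and so must be summed along the dyadic subsequence $2^k$ and interpolated using monotonicity of the running maximum, and in part (ii) is stretched-exponentially small so that Borel--Cantelli applies directly. The only differences from the paper's argument (a union bound in place of expanding $\left(1-Cx^{-\theta}(\log x)^\phi\right)^n$, and evaluating the threshold at $2^k$ rather than $2^{k+1}$, which avoids absorbing a constant into the logarithmic slack) are cosmetic.
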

\begin{proof} First we prove part (i).
From (\ref{iid1}), for some $C \in (0, \infty)$ and all $x$ large enough,
\[ \Pr \Big[ \max_{1 \leq i \leq n} \zeta_i \leq x \Big]
= \prod_{i=1}^n \Pr \left[  \zeta_i \leq x \right]
\geq \left( 1 - C x^{-\theta} (\log x)^\phi \right)^n .\]
Set $x = n^{1/\theta} (\log n)^q$ for some $q \in \R$. Then, for $C' \in (0,\infty)$,
\begin{align*}
p(n) := \Pr \Big[ \max_{1 \leq i \leq n} \zeta_i \geq n^{1/\theta} (\log n)^q \Big]
& \leq 1 - \left( 1 - C' n^{-1} (\log n)^{\phi -\theta q} (1+ o(1)) \right)^n \\
& = O ( 1 \wedge (\log n)^{\phi - \theta q } ) .
\end{align*}
Take $q > (\phi+1)/\theta$. Then $\sum_{k \in \N} p (2^k) < \infty$.
Hence the Borel--Cantelli lemma implies that a.s., for all but finitely many $k \in \N$,
$\max_{1 \leq i \leq 2^k} \zeta_i \leq ( 2^k)^{1/\theta} (\log 2^k )^q$.
For any $n \geq 2$, $2^{k_n} \leq n < 2^{k_n +1}$ for some $k_n \in \N$; hence,
a.s., for all but finitely many $n \in \N$,
\begin{align*}
\max_{1 \leq i \leq  n} \zeta_i & \leq \max_{1 \leq i \leq 2^{k_n + 1} } \zeta_i
\leq (2^{k_n +1} )^{1/\theta} (\log 2^{k_n +1} )^q
  \leq C n^{1/\theta} (\log n)^q ,\end{align*}
where $C<\infty$ does not depend on $n$. Thus we obtain part (i).

Now we prove part (ii). We have from (\ref{iid2}) that for some $c>0$ and all $x$ large enough, $\Pr [ \zeta_1 \geq x]
\geq c x^{-\theta} (\log x)^\phi$, so that
$\Pr  \left[ \max_{1 \leq i \leq n} \zeta_i < x  \right] \leq \left( 1 - c x^{-\theta} (\log x)^\phi \right)^n$.
Taking $x = n^{1/\theta} (\log n)^q$ we obtain
\begin{align*}
\Pr \Big[ \max_{1 \leq i \leq n} \zeta_i < n^{1/\theta} (\log n)^q \Big] & \leq \left( 1- cn^{-1} (\log n)^{\phi - \theta q} (1+o(1)) \right)^n \\
& = O \left( \exp \left( - c (\log n)^{\phi - \theta q} ( 1 + o(1)) \right) \right),
\end{align*}
which is summable over $n \geq 2$ if $q < (\phi -1)/\theta$; now use  the Borel--Cantelli lemma. \end{proof}

 The next result deals with sums of i.i.d.\ nonnegative random variables.

\begin{lm}
\label{mz} Let $\zeta_1, \zeta_2, \ldots$ be
 i.i.d.\ $[0,\infty)$-valued
random
variables.
\begin{itemize}
\item[(i)] If for some $\theta \in (0,1)$ and $\phi \in \R$, (\ref{iid1}) holds,
 then, for any $\eps>0$,  a.s., for all but finitely many $n$,
$\sum_{i=1}^n \zeta_i \leq n^{\frac 1\theta} (\log n)^{\frac{\phi+1}{\theta} +\eps}$.
\item[(ii)]  If, for some $\theta \in (0,\infty)$ and $\phi \in \R$, (\ref{iid2}) holds,
then, for any $\eps >0$,
a.s., for all but finitely many $n$,
$ \sum_{i=1}^n \zeta_i \geq n^{\frac 1\theta} (\log n)^{\frac{\phi-1}{\theta} - \eps }$.
\end{itemize}
\end{lm}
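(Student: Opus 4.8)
The plan is to dispose of part (ii) by a one-line comparison, and to prove part (i) by truncating the $\zeta_i$ at a single, carefully chosen level and then invoking the first Borel--Cantelli lemma along a dyadic subsequence, much as in the proof of Lemma \ref{lem7} but with the extra ingredient of a first-moment bound on the truncated sum. For part (ii), nonnegativity of the $\zeta_i$ gives $\sum_{i=1}^n \zeta_i \geq \max_{1 \leq i \leq n} \zeta_i$, so the claim is immediate from Lemma \ref{lem7}(ii): under (\ref{iid2}), a.s.\ for all but finitely many $n$, $\max_{1 \leq i \leq n} \zeta_i \geq n^{1/\theta}(\log n)^{(\phi-1)/\theta - \eps}$.

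For part (i), fix $\eps>0$; it suffices to prove the bound with $\eps/2$ in place of $\eps$, since the implicit constant can then be absorbed into a factor $(\log n)^{\eps/2}$ for $n$ large. I would fix an exponent $q$ in the window $(\phi+1)/\theta < q < (\phi+1)/\theta + \delta_0$, with $\delta_0 = \delta_0(\eps,\theta)>0$ small (to be pinned down), truncate at level $b_n := n^{1/\theta}(\log n)^q$, and work along the subsequence $n_k := 2^k$. Two estimates are needed. First, a union bound with (\ref{iid1}) gives $\Pr[ \max_{1 \leq i \leq n_k} \zeta_i > b_{n_k} ] \leq n_k \Pr[\zeta_1 > b_{n_k}] \leq C n_k b_{n_k}^{-\theta}(\log b_{n_k})^\phi$, which, since $n_k b_{n_k}^{-\theta} = (\log n_k)^{-\theta q}$ and $\log b_{n_k} \asymp \log n_k \asymp k$, is of order $k^{\phi - \theta q}$; this is summable over $k$ exactly because $q > (\phi+1)/\theta$, so by Borel--Cantelli, a.s.\ for all large $k$ we have $\sum_{i=1}^{n_k}\zeta_i = T_k$, where $T_k := \sum_{i=1}^{n_k} \zeta_i \1\{\zeta_i \leq b_{n_k}\}$. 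Second, since $\theta \in (0,1)$ the integral $\Exp[\zeta_1 \1\{\zeta_1 \leq B\}] = \int_0^B \Pr[\zeta_1 > x]\,\ud x$ is dominated by its behaviour near the upper endpoint, so $\Exp[\zeta_1\1\{\zeta_1 \leq B\}] \leq C B^{1-\theta}(\log B)^\phi$ for $B$ large; hence $\Exp[T_k] \leq C n_k b_{n_k}^{1-\theta}(\log b_{n_k})^\phi$, and since $n_k b_{n_k}^{1-\theta} = n_k^{1/\theta}(\log n_k)^{q(1-\theta)}$ and $\log b_{n_k} \asymp \log n_k$, this is at most $C' n_k^{1/\theta}(\log n_k)^{q(1-\theta)+\phi}$.

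Applying Markov's inequality with threshold $\lambda_k := n_k^{1/\theta}(\log n_k)^{(\phi+1)/\theta + \eps/2}$ gives $\Pr[ T_k \geq \lambda_k ] \leq C' (\log n_k)^{q(1-\theta)+\phi - (\phi+1)/\theta - \eps/2}$, and the identity $q(1-\theta)+\phi = \tfrac{\phi+1}{\theta} - 1 + (1-\theta)(q - \tfrac{\phi+1}{\theta})$ shows the exponent equals $-1 + (1-\theta)(q-\tfrac{\phi+1}{\theta}) - \eps/2$, which is $<-1$ once $\delta_0$ is chosen so that $(1-\theta)\delta_0 < \eps/2$. Since $\log n_k \asymp k$ these probabilities are summable, so Borel--Cantelli together with the first estimate gives, a.s.\ for all large $k$, $\sum_{i=1}^{n_k}\zeta_i = T_k < \lambda_k$. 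Finally, for $n$ with $2^k \leq n < 2^{k+1}$, nonnegativity yields $\sum_{i=1}^n \zeta_i \leq \sum_{i=1}^{n_{k+1}} \zeta_i < \lambda_{k+1} \leq C'' n^{1/\theta}(\log n)^{(\phi+1)/\theta+\eps/2}$, and absorbing $C''$ as above finishes the proof.

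The one genuinely delicate point is the interplay of the two estimates: the "no large term" step forces $q > (\phi+1)/\theta$, while the Markov step forces $q$ not too far above $(\phi+1)/\theta$, and a common admissible $q$ exists \emph{precisely} because $1-\theta>0$. This is where the hypothesis $\theta\in(0,1)$ is used, and it must be used, since for $\theta\geq 1$ the truncated mean no longer has order $B^{1-\theta}(\log B)^\phi$ and the stated bound fails. The remaining ingredients — the integral asymptotics for the truncated mean, the comparisons $\log b_n \asymp \log n$, and the dyadic filling-in — are routine.
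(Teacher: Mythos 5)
Your proof is correct, and for part (i) it takes a genuinely different route from the paper. The paper disposes of part (i) by citation: it is noted to be a classical result in the Marcinkiewicz--Zygmund circle of ideas and is deduced from a theorem of Feller (with Lo\`eve cited for a more general statement), so no argument is given in the text. You instead give a self-contained proof by truncating at the single level $b_n = n^{1/\theta}(\log n)^q$, killing the untruncated terms along the dyadic subsequence via a union bound and Borel--Cantelli (summability forcing $q > (\phi+1)/\theta$), bounding the truncated first moment by $\Exp[\zeta_1\1\{\zeta_1\le B\}] = O(B^{1-\theta}(\log B)^\phi)$, and closing with Markov plus Borel--Cantelli, where the exponent bookkeeping works precisely because $1-\theta>0$ lets you take $q$ only slightly above $(\phi+1)/\theta$; the dyadic filling-in and absorption of constants into $(\log n)^{\eps/2}$ are handled correctly. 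What your approach buys is transparency and self-containedness --- in particular it makes explicit exactly where the hypothesis $\theta\in(0,1)$ enters, which the paper's citation hides --- at the cost of about a page of routine estimates; the paper's approach buys brevity and defers to a sharper classical statement. For part (ii) your argument (the bound $\sum_{i=1}^n\zeta_i \ge \max_{1\le i\le n}\zeta_i$ combined with Lemma \ref{lem7}(ii)) is exactly the paper's proof.
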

\begin{proof}
Part (i) is a part of a family of classical results related to the
  Marcienkiewicz--Zygmund    strong laws of large numbers (see e.g.\
 \cite[p.\ 73]{kall}): it follows from
a result of Feller \cite[Theorem 2]{feller1} (see also   \cite[p.\ 253]{loeve1} for a more general result).
Part (ii) is a consequence of Lemma \ref{lem7}(ii)
and the elementary bound $\sum_{i=1}^n \zeta_i \geq \max_{1 \leq i \leq n} \zeta_i$. \end{proof}

  Next we move on to some basic consequences of (A0) and (A1).
    Here `i.o.'\ and `f.o.'\ stand for `infinitely often' and `finitely often', respectively.

   \begin{lm}
   \label{irrlem}
   Suppose that (A0) and (A1) hold. Let $R, S \subset \SS$ be finite and non-empty.
   Then $\{ X_t \in R \textrm{ i.o.} \} = \{ X_t \in S \textrm{ i.o.} \}$ up to sets of probability $0$.
   Moreover, for any (hence every) finite, non-empty $R \subset \SS$, the following equalities hold up to sets of probability $0$:
    \begin{align*}
\{ X_t \in R \textrm{ i.o.} \} & = \big\{ \liminf_{t \to \infty} X_t = 0, \, \limsup_{t \to \infty} X_t = \infty \big\}, \\
\{ X_t \in R \textrm{ f.o.} \} & = \big\{ \lim_{t \to \infty} X_t  = \infty \big\}.
\end{align*}
    In particular,
 $\Pr \left[ \{ X_t \to \infty \} \cup \big\{  \liminf_{t \to \infty} X_t = 0, \, \limsup_{t \to \infty} X_t = \infty \big\} \right] = 1$.
   \end{lm}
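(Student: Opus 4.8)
The plan is to build everything on the irreducibility assumption (A1), which says that from any state $x$, the process reaches any prescribed state $y$ within a bounded number of steps $m(x,y)$ with probability at least $\varphi(x,y)>0$. First I would prove the basic exchange-of-sets statement $\{X_t \in R \textrm{ i.o.}\} = \{X_t \in S \textrm{ i.o.}\}$. By symmetry and a union over the finitely many elements of $S$, it suffices to show that $\{X_t \in R \textrm{ i.o.}\} \subseteq \{X_t = y \textrm{ i.o.}\}$ for a single $y \in \SS$ and a single-point $R = \{z\}$. Here the idea is a conditional Borel--Cantelli / L\'evy-type argument: on the event $\{X_t = z \textrm{ i.o.}\}$, pick the successive (random) times $t_1 < t_2 < \cdots$ at which $X_{t_k} = z$; applying (\ref{irred}) at each such time gives $\Pr[ X_{t_k + m(z,y)} = y \mid \F_{t_k}] \geq \varphi(z,y) > 0$. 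Since $\varphi(z,y)$ is a strictly positive constant (not depending on $t$ or $\omega$), the conditional version of the second Borel--Cantelli lemma (or L\'evy's extension, cf.\ \cite[Chapter VI]{asmussen} or a standard martingale argument) forces infinitely many of the events $\{X_{t_k + m(z,y)} = y\}$ to occur, almost surely on $\{X_t = z \textrm{ i.o.}\}$. Hence $X_t$ visits $y$ infinitely often as well.

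Next I would identify $\{X_t \in R \textrm{ f.o.}\}$ with $\{X_t \to \infty\}$. One inclusion is immediate: if $X_t \to \infty$ then, since $R$ is bounded and $\SS$ is locally finite so $R$ is finite, $X_t$ can lie in $R$ only finitely often. For the reverse, suppose $X_t \in R \textrm{ f.o.}$ but $X_t \not\to \infty$. Then $\liminf_t X_t =: L < \infty$, and by local finiteness of $\SS$ there is some state $w \in \SS$ (with $w \notin R$ possibly, but that is fine) visited infinitely often along a subsequence realising values near $L$; more carefully, $\{\liminf_t X_t < \infty\}$ implies $X_t$ takes some fixed value $w$ infinitely often, so $\{X_t = w \textrm{ i.o.}\}$ holds. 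But by the first part of the lemma, $\{X_t = w \textrm{ i.o.}\} = \{X_t \in R \textrm{ i.o.}\}$ up to null sets, contradicting $X_t \in R \textrm{ f.o.}$. Hence $X_t \to \infty$, giving $\{X_t \in R \textrm{ f.o.}\} \subseteq \{X_t \to \infty\}$ up to null sets. This simultaneously shows $\{X_t \in R \textrm{ i.o.}\} = \{\liminf_t X_t = 0, \limsup_t X_t = \infty\}$: on $\{X_t \in R \textrm{ i.o.}\}$ we get $\limsup_t X_t = \infty$ from Proposition \ref{dichot} (or directly, since by the exchange property the process visits arbitrarily large states infinitely often — apply the argument with $y$ large), and $\liminf_t X_t = 0$ because $0 \in \SS$ and, by the exchange-of-sets statement with $S = \{0\}$, $X_t = 0$ infinitely often.

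The final displayed assertion, $\Pr[\{X_t \to \infty\} \cup \{\liminf_t X_t = 0, \limsup_t X_t = \infty\}] = 1$, is then just the observation that for a fixed finite non-empty $R$ the events $\{X_t \in R \textrm{ i.o.}\}$ and $\{X_t \in R \textrm{ f.o.}\}$ partition the sample space, and we have identified each with one of the two sets in the union. I expect the main obstacle to be the first step: making the conditional Borel--Cantelli argument fully rigorous given that the visit times $t_k$ to $z$ are random and the gaps $m(z,y)$ mean the relevant events are not adapted in a naive way. The clean way around this is to work along the stopping times $t_k$, use that $\varphi(z,y)$ is a deterministic positive lower bound uniformly over all $t$ and $\omega$ (this is exactly the uniformity built into (A1)), and invoke the extended (conditional) second Borel--Cantelli lemma for the sequence of events $\{X_{t_k + m(z,y)} = y\}$ relative to the filtration $(\F_{t_k + m(z,y)})_k$; the divergence of $\sum_k \varphi(z,y) = \infty$ does the rest. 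Everything else is bookkeeping with local finiteness of $\SS$ and the finiteness of $R$ and $S$.
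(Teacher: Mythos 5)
Your proposal follows essentially the same route as the paper's proof: a conditional (L\'evy-extension) Borel--Cantelli argument along stopping times at successive visits to a fixed state, using the uniform lower bound $\varphi(x,y)>0$ built into (A1), followed by the bookkeeping with local finiteness of $\SS$ (and $0\in\SS$, unboundedness of $\SS$) to identify the i.o./f.o.\ events with the two limit events and conclude by the partition of the sample space. Two small points where the paper is tighter than your sketch: it resolves the adaptedness obstacle you flag by choosing the visit times spaced so that $t_{i+1} > t_i + m(x,y)$, which is exactly what makes the bound $\Pr[ X_{t_k+m(x,y)}=y \mid \F_{t_{k-1}+m(x,y)} ] \geq \varphi(x,y)$ legitimate via the tower property (with raw successive visit times closer together than $m(x,y)$ this conditional bound need not hold, so one should subsample); and your appeal to Proposition \ref{dichot} for $\limsup_t X_t=\infty$ would be circular, since that proposition is itself deduced from this lemma, but your parenthetical direct argument (take $y$ arbitrarily large in $\SS$) is the correct fix.
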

   \begin{proof} Let $R, S \subset \SS$ be finite and non-empty.
   Suppose that $X_t \in R$ i.o.
   Then, since $R$ is finite,
    there exist $x \in R$, $y \in S$ and stopping times $t_1 < t_2 < \cdots$ with $t_{i+1} > t_i + m(x,y)$
   such that $X_{t_i} = x$ and $\Pr [ X_{t_i + m(x,y)} = y \mid \F_{t_i} ] \geq \varphi (x,y) >0$ for all $i$, by (\ref{irred}).
  Then L\'evy's extension of the Borel--Cantelli lemma (see e.g.\ \cite[p.\ 131]{kall}) implies
   that
   $X_t = y$ i.o., a.s., giving the first statement in the lemma.
   Hence, a.s., either $X_t \in R$ i.o.\ for {\em all} finite non-empty $R \subset  \SS$ (including $R=\{0\}$), or for {\em none}.
   It follows  that  $\liminf_{t \to \infty} X_t \in \{ 0, \infty \}$ a.s.,
  and the same fact also implies that
$\limsup_{t \to \infty} X_t = \infty$ a.s.
     \end{proof}

The next result says, roughly speaking, that uniformly for sites $x$ in some interval, there is positive
probability that, starting from that interval, the process hits $x$ before leaving some
larger interval. We use the notation $\SS_x := \SS \cap [0,x]$ for $x \geq 0$,
\[  \tau_{x,t} := \min \{ s \geq 0 : X_{t+s} = x\}, ~\textrm{and}~
  \sigma_{x,t} := \min \{ s \geq 0 : X_{t+s} > x \}. \]

\begin{lm}
\label{hit0}
Suppose that (A0) and (A1) hold and that for some $C< \infty$,
$\Exp [ \Delta_t \mid \F_t ] \leq C$, a.s.,   for all $t \in \N$.
Let $A < \infty$.
There exist  $\varphi = \varphi(A) >0$ and
$B = B(A, C) > A$ such that for any $x \in \SS_A$, on $\{ X_t \leq A \}$,
$\Pr [ \tau_{x,t} < \sigma_{B,t} \mid \F_t ] \geq \varphi$, a.s., for all $t \in \N$.
\end{lm}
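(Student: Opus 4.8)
The plan is to combine the drift bound $\Exp[\Delta_t \mid \F_t] \le C$ with the irreducibility assumption (A1). The role of the drift bound is to guarantee, via a maximal inequality, that a single excursion of length $m(x,y)$ — the number of steps needed for the irreducibility move — does not overshoot some large level $B$ with high probability, uniformly in starting positions bounded by $A$. The role of (A1) is then to supply, on the complementary high-probability event, a fixed positive chance of actually reaching $x$.

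First I would fix $A<\infty$ and let $x \in \SS_A$ be arbitrary. By (A1) there exist $m(y,x)\in\N$ and $\varphi(y,x)>0$ for each $y\in\SS_A$ such that $\Pr[X_{t+m(y,x)}=x\mid \F_t]\ge\varphi(y,x)$ on $\{X_t=y\}$. Since $\SS_A$ is finite (by local finiteness, (A0)(a)), set $m_* := \max_{y\in\SS_A} m(y,x)$ and $\varphi_* := \min_{y\in\SS_A}\varphi(y,x)>0$; both depend only on $A$ (and on $x$, but in a way we can bound uniformly — see the last paragraph). Thus, on $\{X_t \le A\}$,
\[ \Pr[ X_{t+m(X_t,x)} = x \mid \F_t ] \ge \varphi_* > 0, \as \]
Next, on $\{X_t \le A\}$, I would control $\max_{0 \le s \le m_*} X_{t+s}$. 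Write $X_{t+s} = X_t + \sum_{j=0}^{s-1}\Delta_{t+j}$, and decompose $\Delta_{t+j} = (\Delta_{t+j} - \Exp[\Delta_{t+j}\mid\F_{t+j}]) + \Exp[\Delta_{t+j}\mid\F_{t+j}]$. The conditional-mean part contributes at most $C m_*$ in absolute value over $m_*$ steps. The centred part is a martingale increment; by the maximal inequality (Doob, or just Markov applied after summing conditional $L^1$ bounds), $\Pr[\max_{0\le s\le m_*} |\sum_{j<s}(\Delta_{t+j}-\Exp[\cdots])| > K \mid \F_t]$ is small for $K$ large, uniformly in $t$, using the increment moment bound (\ref{moms}) with $p>2$ (which holds under (A2) and gives a uniform $L^1$, indeed $L^2$, control on centred increments on $\{X_t\le A\}$, since $(1+X_t)^{p-2-\delta}$ is bounded there). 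Hence there is $B = B(A,C) > A$ such that, on $\{X_t\le A\}$,
\[ \Pr\Big[ \max_{0 \le s \le m_*} X_{t+s} \le B \;\Big|\; \F_t \Big] \ge 1 - \frac{\varphi_*}{2}, \as \]

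Now intersect the two events. On $\{X_t \le A\}$, with probability at least $\varphi_* - \tfrac{\varphi_*}{2} = \tfrac{\varphi_*}{2}$, we have both $X_{t+m(X_t,x)}=x$ and $X_{t+s}\le B$ for all $0\le s\le m_*$; in particular $X_{t+s}\le B$ for all $0 \le s \le m(X_t,x)$. On this event, the walk reaches $x$ at time $t+m(X_t,x)$ without ever exceeding $B$ in between, i.e. $\tau_{x,t} \le m(X_t,x) < \sigma_{B,t}$ (here $\sigma_{B,t}>m(X_t,x)$ because $X_{t+s}\le B$ throughout, so it never strictly exceeds $B$). Taking $\varphi := \varphi_*/2$ finishes the bound $\Pr[\tau_{x,t}<\sigma_{B,t}\mid\F_t]\ge\varphi$ on $\{X_t\le A\}$.

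The main obstacle is getting the constants $\varphi$ and $B$ to be \emph{uniform in $x \in \SS_A$} (not just in $t$): a priori $m(y,x)$ and $\varphi(y,x)$ depend on $x$. This is handled by noting that $\SS_A \times \SS_A$ is finite (again by (A0)(a), $\SS$ is locally finite), so $m_* := \max_{x,y\in\SS_A} m(y,x)$ and $\varphi_* := \min_{x,y\in\SS_A}\varphi(y,x)>0$ are finite and positive, and the value of $B$ needed in the maximal-inequality step depends only on $m_*$, $C$, and the (bounded on $\{X_t\le A\}$) increment moments, hence only on $A$ and $C$. So the final $\varphi = \varphi(A)$ and $B = B(A,C)$ have exactly the claimed dependence. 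A minor technical point to check carefully is that the maximal-inequality estimate is genuinely uniform in $t$: this follows because assumption (A2), via (\ref{moms}), bounds $\Exp[\Delta_t^2\mid\F_t]$ uniformly in $t$ on the set $\{X_t\le A\}$ by a constant depending only on $A$, and the filtration argument only ever looks at the $m_*$ steps following time $t$.
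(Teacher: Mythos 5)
Your overall architecture is the same as the paper's: use (A1) together with local finiteness of $\SS$ to get constants $m_* = \max_{x,y\in\SS_A} m(x,y)<\infty$ and $\varphi_*=\min_{x,y\in\SS_A}\varphi(x,y)>0$ uniform over the finite set $\SS_A$, then show that within those $m_*$ steps the process exceeds a large level $B$ with conditional probability at most $\varphi_*/2$, and intersect the two events. The irreducibility step and the final intersection are fine. The problem is the implementation of the overshoot estimate, where there is a genuine gap: the lemma assumes only (A0), (A1) and the one-sided conditional drift bound $\Exp[\Delta_t\mid\F_t]\le C$, but your martingale decomposition needs (i) a two-sided bound $|\Exp[\Delta_{t+j}\mid\F_{t+j}]|\le C$ (the hypothesis does not exclude an arbitrarily negative drift, and then the centred increments you want to control by Doob are not small just because the sum is), and (ii) an $L^1$ or $L^2$ bound on the centred increments, which you propose to extract from (A2)/(\ref{moms}). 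Neither (A2) nor (\ref{moms}) is a hypothesis of this lemma; moreover (\ref{moms}) is a separate assumption, not a consequence of (A2), and (A2) itself only constrains the increment moments asymptotically as $X_t\to\infty$ (uniformly on $\{X_t>x\}$ for large $x$), so it yields no bound at all on the bounded set $\{X_t\le A\}$ that your argument works on. As written, the proof therefore does not establish the lemma under its stated hypotheses; it would only prove a weaker statement under additional moment assumptions.

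The fix — and the route the paper takes — is to avoid centring altogether and apply a maximal inequality directly to the nonnegative process $X$: since $\SS\subset[0,\infty)$ and $\Exp[\Delta_s\mid\F_s]\le C$ for all $s$, optional stopping at the first time in $\{t,\dots,t+m_*\}$ that the process exceeds $w$ gives, on $\{X_t\le A\}$,
\[
\Pr\Big[\max_{0\le s\le m_*}X_{t+s}>w\;\Big|\;\F_t\Big]\le\frac{A+Cm_*}{w},\qquad\textrm{a.s.}
\]
(this is \cite[Lemma 3.1]{mvw}, which needs only nonnegativity and the one-sided first-moment bound). Choosing $w=B=B(A,C)$ so that the right-hand side is at most $\varphi_*/2$ and then intersecting with the irreducibility event, exactly as in your last paragraph, completes the proof using only the lemma's hypotheses.
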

\begin{proof}
From (\ref{irred}), writing
  $m = \max_{x,y \in \SS_A}   m (x,y)  $ and
$\varphi = \min_{x, y \in \SS_A}  \varphi(x,y) $
we have by (A0) and (A1) that $m  < \infty$ and $\varphi > 0$ (depending   on
$A$), and, moreover, on $\{ X_t \leq A \}$, for any $x \in \SS_A$,
$\Pr [ \tau_{x,t} \leq m \mid \F_t ] \geq \varphi$, a.s.
In addition, by an appropriate maximal inequality \cite[Lemma 3.1]{mvw} and the first moment bound in the lemma, on $\{ X_t \leq A \}$,
\[ \Pr [ \sigma_{hm,t} \leq m \mid \F_t ] = \Pr \Big[ \max_{0 \leq s \leq m } X_{t+s} > h m \mid \F_t \Big] \leq \frac{C m + A}{hm} \leq \frac{\varphi}{2}  ,\]
choosing $h$ sufficiently large (depending  on $A$ and $C$). Combining the  two probability bounds
we obtain the statement in the lemma, after a relabelling of  $\varphi/2$ as $\varphi$. \end{proof}

Recall that $\tau_n$ is the time of the $n$th return to $0$ by $X$, and recall that $N$, as defined just before (A3),
is the first $n$ for which $\tau_n = \infty$.

\begin{lm}
\label{yhit}
Suppose that (A0), (A1),
 and (A3) hold, and $N = \infty$ a.s. Then for any $y \in \SS \setminus \{ 0\}$, there
exists $c(y)>0$ such that, for any $n$,
\[ \Pr \left[ (X_t)_{t \geq \tau_n} \textrm{visits } y \textrm{ before time } \tau_{n+1} \mid \F_{\tau_n} \right] = c(y), \as\]
\end{lm}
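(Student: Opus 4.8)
The plan is to reduce this to the irreducibility assumption (A1) together with the i.i.d.\ excursion structure from (A3). First I would fix $y \in \SS \setminus \{0\}$ and note that, since $N = \infty$ a.s., all the return times $\tau_n$ are a.s.\ finite and, by part (a) of (A3), the excursions $\EE_1, \EE_2, \ldots$ are i.i.d. In particular, the event that the $n$th excursion visits $y$, namely $\{ y \in \EE_{n+1} \} := \{ X_t = y \textrm{ for some } t \in [\tau_n, \tau_{n+1}) \}$, depends only on $\EE_{n+1}$, and the strong Markov-type structure at $\tau_n$ (more precisely, the regenerative property underlying (A3)) means this event is independent of $\F_{\tau_n}$ and has a probability $c(y)$ not depending on $n$. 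This already gives the displayed identity provided we can show $c(y) > 0$.

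The positivity $c(y) > 0$ is the crux, and here I would appeal to (A1). At time $\tau_n$ we have $X_{\tau_n} = 0$, so by (\ref{irred}) with $x = 0$ there exist $m = m(0,y) \in \N$ and $\varphi(0,y) > 0$ with $\Pr[X_{\tau_n + m} = y \mid \F_{\tau_n}] \geq \varphi(0,y) > 0$ a.s. Thus with probability at least $\varphi(0,y)$ the process is at $y$ at time $\tau_n + m$. This does not immediately say the excursion straddling $\tau_n$ visits $y$ before returning to $0$, because the process could in principle have returned to $0$ at some time in $(\tau_n, \tau_n + m]$ and then gone out to $y$ in a later excursion. To handle this cleanly I would instead argue: on the event $\{X_{\tau_n + m} = y\}$, consider the last visit to $0$ at or before time $\tau_n + m$, say at time $\tau_{n+k}$ for some $k \geq 0$; then the excursion $\EE_{n+k+1}$ visits $y$ (at time $\tau_n + m$, which lies strictly before $\tau_{n+k+1}$ since $X_{\tau_n+m} = y \neq 0$). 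Hence on this event {\em some} excursion among $\EE_{n+1}, \EE_{n+2}, \ldots$ visits $y$, so $\Pr[\bigcup_{j \geq 1}\{ y \in \EE_{n+j}\} \mid \F_{\tau_n}] \geq \varphi(0,y) > 0$; since the $\EE_{n+j}$ are i.i.d., this forces $\Pr[y \in \EE_1] = c(y) > 0$ (if it were $0$ the union would have probability $0$).

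Finally I would assemble these pieces: the event in the lemma statement is exactly $\{ y \in \EE_{n+1}\}$, which by the regenerative structure from (A3) is independent of $\F_{\tau_n}$ with probability $\Pr[y \in \EE_1] =: c(y)$, and the previous paragraph shows $c(y) > 0$. I expect the main obstacle to be the bookkeeping in the positivity argument --- in particular being careful that (\ref{irred}) is applied at the stopping time $\tau_n$ (legitimate since (\ref{irred}) is stated to hold for all $t$ with a bound measurable with respect to $\F_t$, and $\tau_n$ is a stopping time with $X_{\tau_n} = 0$), and that ``visits $y$ before $\tau_{n+1}$'' is correctly related to the excursion $\EE_{n+1}$ rather than a later one. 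The measure-theoretic identity $\Pr[\,\cdot \mid \F_{\tau_n}] = c(y)$ a.s.\ then follows from the i.i.d.\ excursion assumption (A3)(a) and the fact that $\sigma(\EE_{n+1}, \EE_{n+2}, \ldots)$ is independent of $\F_{\tau_n}$ under the regenerative structure (see \cite[Chapter VI]{asmussen}).
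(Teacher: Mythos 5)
Your proposal is correct and follows essentially the same route as the paper: use the regenerative structure (A3) to see that the conditional probability of visiting $y$ within an excursion is a constant $c(y)$ independent of $n$, use (A1) at time $\tau_n$ to get $\Pr[X_{\tau_n+m(0,y)}=y\mid\F_{\tau_n}]\geq\varphi(0,y)>0$, and conclude $c(y)>0$ because otherwise a union bound over all subsequent excursions would make the probability of ever hitting $y$ zero, a contradiction. Your explicit bookkeeping about which excursion contains the time $\tau_n+m$ is a slightly more careful spelling-out of the same contradiction the paper invokes.
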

\begin{proof}
The irreducibility assumption (\ref{irred}) implies that for any $n$, on $\{ \tau_n < \infty \}$,
\begin{equation}
\label{h1}
\Pr [ X_{\tau_n + m(0,y)} = y \mid \F_{\tau_n} ] \geq \varphi (0,y) , \as \end{equation}
By the regenerative assumption (A3), $\Pr \left[ \textrm{hit }  y  \textrm{ before returning to } 0 \mid \F_{\tau_n} \right]$, on $\{ \tau_n < \infty \}$,
does not depend on $n$; call this probability $c(y)$. Then, since $N = \infty$ a.s.,
\[ \Pr [   \textrm{eventually hit }  y ] = \Pr \Big[ \bigcup_{i=1}^\infty \{ \textrm{hit } y  \textrm{ between }   \tau_i  \textrm{ and } \tau_{i+1} \}
\Big] \leq \sum_{i=1}^\infty c(y) .\]
Thus if $c(y) =0$, the probability of eventually hitting $y$ is also $0$, which contradicts (\ref{h1})
(cf Lemma \ref{irrlem}). Hence $c(y)>0$.
\end{proof}

A recurring technical component of our proofs will be controlling the process $X$ in finite intervals such as $[0,x]$,
and the exits (and overshoots) of $X$ from such intervals. The following two lemmas
give basic results in this direction.

\begin{lm}
\label{sigmalem}
Suppose that (A0) and (A1) hold.
For any $x \geq 0$, there exists $\eps>0$
such that, for all $t$ and for all $s$ sufficiently large,
$\Pr [ \sigma_{x,t} > s \mid \F_t] \leq \re^{-\eps s}$ a.s.
In particular, there exists $K < \infty$,
depending on $x$, for which $\Exp [ \sigma_{x,t} \mid \F_t ] \leq K$  a.s.\ for all $t$.
\end{lm}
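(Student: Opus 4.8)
The plan is to deduce the exponential tail bound directly from the irreducibility hypothesis (A1) by a standard geometric-trials argument. The crucial structural fact is that the set $\SS_x := \SS \cap [0,x]$ is \emph{finite}, by local finiteness of $\SS$ in (A0); this is what makes the relevant constants uniform in $t$ and in the position of $X$ inside $[0,x]$.

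First I would fix a point $y \in \SS$ with $y > x$, which exists because $\SS$ is unbounded. Since $\SS_x$ is finite, the quantities $m^* := \max_{z \in \SS_x} m(z,y)$ and $\varphi^* := \min_{z \in \SS_x} \varphi(z,y)$ satisfy $m^* < \infty$ and $\varphi^* > 0$, where $m(\cdot,\cdot),\varphi(\cdot,\cdot)$ are as in (\ref{irred}). Then (\ref{irred}) gives, on $\{X_t = z\}$ for each $z \in \SS_x$, that $\Pr[X_{t+m(z,y)} = y \mid \F_t] \ge \varphi(z,y)$ a.s.; and on that last event the process has exceeded level $x$ by time $m(z,y) \le m^*$, so $\sigma_{x,t} \le m^*$ there. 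Summing over the finitely many $z \in \SS_x$ yields the ``one-block'' estimate: on $\{X_t \le x\}$, a.s., for every $t$, $\Pr[\sigma_{x,t} \le m^* \mid \F_t] \ge \varphi^*$.

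Next I would iterate this estimate at the deterministic epochs $t + k m^*$, $k = 0,1,2,\ldots$. The key identity is $\{\sigma_{x,t} > (k+1)m^*\} = \{\sigma_{x,t} > km^*\} \cap \{\sigma_{x,\,t+km^*} > m^*\}$; the first event is $\F_{t+km^*}$-measurable and on it $X_{t+km^*}\le x$, so applying the one-block estimate at time $t+km^*$ and then the tower property gives $\Pr[\sigma_{x,t} > (k+1)m^* \mid \F_t] \le (1-\varphi^*)\,\Pr[\sigma_{x,t} > km^* \mid \F_t]$. Induction yields $\Pr[\sigma_{x,t} > km^* \mid \F_t] \le (1-\varphi^*)^k$ a.s. Writing a general $s$ as $km^*+r$ with $k=\lfloor s/m^*\rfloor$ converts this into $\Pr[\sigma_{x,t} > s \mid \F_t] \le (1-\varphi^*)^{-1}\re^{-\eps_0 s}$ with $\eps_0 := -(1/m^*)\log(1-\varphi^*) > 0$, and for $s$ large the prefactor is absorbed to give the stated bound with some $\eps \in (0,\eps_0)$. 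The moment statement is then immediate, e.g.\ $\Exp[\sigma_{x,t}\mid\F_t] \le m^*\sum_{k\ge 0}\Pr[\sigma_{x,t} > km^*\mid\F_t] \le m^*/\varphi^* =: K < \infty$.

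I do not expect a genuine obstacle; the two points needing care are (i) that the constants $m^*$ and $\varphi^*$ are truly uniform, which is exactly where local finiteness of $\SS$ is used and which would fail over a general countable state space, and (ii) the conditioning bookkeeping in the iteration, where — since $X$ is not assumed Markov — one must argue with the deterministic times $t+km^*$ and the tower property for $(\F_t)$ rather than invoking a strong Markov property.
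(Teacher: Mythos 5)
Your proposal is correct and follows essentially the same route as the paper's proof: fix a target state above level $x$, use local finiteness of $\SS_x$ to obtain uniform constants $m^*$ and $\varphi^*$ from (A1), and iterate the resulting one-block escape estimate at the deterministic times $t+km^*$ via the tower property to get geometric decay, which converts into the exponential tail and the uniform bound on $\Exp[\sigma_{x,t}\mid\F_t]$. The only difference is cosmetic bookkeeping in how the iteration is written, so nothing further is needed.
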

\begin{proof}
Let $x \geq 0$ and $z \in \SS$, $z>x$.
By (A0) and (A1),
 taking $m = \max_{y \in \SS_x} m(y,z)$ and $\delta =\min_{y \in \SS_x} \varphi (y,z)$
we have $m \in \N$ and   $\delta>0$, depending on $x$, such that, for any $s \geq t$,
\[ \Pr [ \sigma_{x,t} \leq  (s-t) + m \mid \F_s ] \geq \delta \1 \{ \sigma_{x,t} > s-t \} + \1 \{ \sigma_{x,t} \leq s-t \} , \as \]
Taking $s = t + rm$ for $r \in \N$ yields $\Pr [ \sigma_{x,t} > (r+1)m \mid \F_{t+rm} ]
\leq (1-\delta) \1 \{ \sigma_{x,t} > rm \}$, and   a telescoping conditioning argument
at times $t, t+m,  \ldots, t+rm$
gives
 $\Pr [ \sigma_{x,t} > rm \mid \F_t ] \leq (1-\delta)^r$.
For any $s \geq 0$, there is some $r=r(s)$ for which $r m \leq s \leq (r+1)m$, so
\[ \Pr [ \sigma_{x,t} > s \mid \F_t ] \leq \Pr [ \sigma_{x,t} > r m \mid \F_t ] \leq (1-\delta)^r \leq (1-\delta)^{(s/m)-1} ,\]
which implies the result, recalling that $m$ and $\delta$ depend on $x$ but not on $s$ or $t$.
\end{proof}

\begin{lm}
\label{lem55}
Suppose that (A0)--(A3) hold, $r > -1$, and (\ref{moms}) holds with $p > \max \{2, 1+r \}$.
Let $x \geq 0$.
Then for any $\nu \in \R$, there exists $K < \infty$ (depending on $x$) such that, on $\{ X_t \leq x  \}$,
$\Exp [ f_{1+r, \nu} (X_{t+\sigma_{x,t}} ) \mid \F_t ] \leq K$, a.s.
\end{lm}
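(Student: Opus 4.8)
The plan is to bound the expected value of the Lyapunov function $f_{1+r,\nu}$ evaluated at the first exit of $X$ from $[0,x]$, by controlling separately the contribution of the exit time $\sigma_{x,t}$ and the contribution of the overshoot at the moment of exit. First I would reduce to the case $\nu \geq 0$: since $\log^\nu(\re+y)$ is decreasing in $y$ for $\nu<0$ and the process is at height $>x$ at time $t+\sigma_{x,t}$ (or at $X_t \leq x$ if it does not move above $x$ immediately), the $\nu<0$ bound follows from the $\nu=0$ bound up to a constant factor. So the real work is for $\nu \geq 0$, where $f_{1+r,\nu}(y) \leq C (1+y)^{1+r+\eps'}$ for any $\eps'>0$ and $y$ large, so it suffices to bound $\Exp[(1+X_{t+\sigma_{x,t}})^{1+r+\eps'} \mid \F_t]$ on $\{X_t \leq x\}$ uniformly in $t$ (choosing $\eps'$ small enough that $p > 1+r+\eps'$, which is possible since $p > \max\{2,1+r\}$).

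Next I would decompose $X_{t+\sigma_{x,t}}$ into the position just before the overshoot step plus the overshoot increment. On $\{X_t \leq x\}$, writing $\sigma := \sigma_{x,t}$, the value $X_{t+\sigma-1} \leq x$ by definition of $\sigma$, so $X_{t+\sigma} = X_{t+\sigma-1} + \Delta_{t+\sigma-1} \leq x + |\Delta_{t+\sigma-1}|$. Hence it is enough to control $\Exp[|\Delta_{t+\sigma-1}|^{1+r+\eps'} \mid \F_t]$. Here I would sum over the (random) value of $\sigma$: by the strong Markov / adapted structure and the moment hypothesis (\ref{moms}), on $\{\sigma = s+1\}$ one has $\Exp[|\Delta_{t+s}|^{1+r+\eps'} \mid \F_{t+s}] \leq \Exp[|\Delta_{t+s}|^p \mid \F_{t+s}]^{(1+r+\eps')/p} \leq (C(1+x)^{p-2-\delta})^{(1+r+\eps')/p}$, a bound that is a constant depending only on $x$ (crucially using $X_{t+s} \leq x$ on $\{\sigma > s\}$). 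Summing $\Pr[\sigma = s+1 \mid \F_t]$ over $s$ gives $1$, so $\Exp[|\Delta_{t+\sigma-1}|^{1+r+\eps'} \mid \F_t]$ is bounded by a constant depending on $x$ alone; combined with the deterministic bound $X_{t+\sigma-1} \leq x$ and the elementary inequality $(a+b)^\beta \leq 2^\beta(a^\beta+b^\beta)$, this yields the desired uniform bound on $\Exp[(1+X_{t+\sigma_{x,t}})^{1+r+\eps'} \mid \F_t]$.

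The step I expect to be the main obstacle is handling the interaction between the randomness of $\sigma_{x,t}$ and the conditional moment bound: one must be careful that $\{\sigma_{x,t} > s\}$ is $\F_{t+s}$-measurable and that on this event $X_{t+s} \leq x$, so that (\ref{moms}) can legitimately be applied at the stopped index $t+\sigma_{x,t}-1$. The clean way to organize this is to write $\Exp[|\Delta_{t+\sigma-1}|^{\beta}\mid\F_t] = \sum_{s \geq 0} \Exp[\1\{\sigma = s+1\}|\Delta_{t+s}|^{\beta}\mid\F_t]$ with $\beta = 1+r+\eps'$, and inside each term condition on $\F_{t+s}$, noting $\{\sigma=s+1\} \subseteq \{\sigma>s\} \subseteq \{X_{t+s}\leq x\} \in \F_{t+s}$, and that $\{\sigma = s+1\}$ itself is $\F_{t+s+1}$-measurable but $\{\sigma > s\}$ is $\F_{t+s}$-measurable, so $\Exp[\1\{\sigma=s+1\}|\Delta_{t+s}|^\beta \mid \F_{t+s}] \leq \1\{\sigma > s\}\Exp[|\Delta_{t+s}|^\beta\mid\F_{t+s}] \leq \1\{\sigma>s\} \cdot C'(x)$. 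Finiteness of $\sum_s \Pr[\sigma>s\mid\F_t] = \Exp[\sigma_{x,t}\mid\F_t]$, which is bounded uniformly in $t$ by Lemma \ref{sigmalem}, closes the argument. (Alternatively one can avoid even this sum by noting $\Exp[\sigma_{x,t} \mid \F_t] \leq K$ already makes $\sigma_{x,t}$ integrable, and applying optional-stopping-type reasoning, but the direct summation is cleanest.)
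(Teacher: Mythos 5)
Your argument is correct, but it is genuinely different from the one in the paper. The paper's proof stays inside the Lyapunov-function framework: it takes the one-step drift estimate (\ref{semieq4}) from Lemma \ref{semilem}, notes that while the process is in $[0,x]$ the expected increment of $f_{1+r,\nu}(X_s)$ is bounded by a constant $b(x)$, sums over $s$ up to the stopped time to get $\Exp[f_{1+r,\nu}(X_{u\wedge(t+\sigma_{x,t})})\mid\F_t]\leq a(x)+b(x)\Exp[\sigma_{x,t}\mid\F_t]$, and then invokes Lemma \ref{sigmalem} and Fatou's lemma. You instead bound the terminal value directly through the overshoot: $X_{t+\sigma_{x,t}}\leq x+|\Delta_{t+\sigma_{x,t}-1}|$, with the $(1+r+\eps')$-moment of the overshoot increment controlled on each event $\{\sigma_{x,t}>s\}\in\F_{t+s}$ (where $X_{t+s}\leq x$) via (\ref{moms}) and conditional Jensen, the sum over $s$ again being closed by $\Exp[\sigma_{x,t}\mid\F_t]\leq K$ from Lemma \ref{sigmalem}. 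Your route buys independence from Lemma \ref{semilem} and hence from the drift asymptotics (A2): only the moment bound (\ref{moms}) with $p>1+r$ and the exponential tail of $\sigma_{x,t}$ are used, and no Fatou/uniform-integrability step is needed; the paper's route buys reuse of machinery it has already set up and avoids any explicit overshoot computation. Two small points to tidy: in your middle paragraph the claim that ``summing $\Pr[\sigma=s+1\mid\F_t]$ over $s$ gives $1$'' is not what your argument actually delivers, since $\{\sigma=s+1\}$ is only $\F_{t+s+1}$-measurable — as you correctly note afterwards, the usable event is $\{\sigma>s\}$ and the sum becomes $\Exp[\sigma_{x,t}\mid\F_t]$, which is exactly why Lemma \ref{sigmalem} is needed; and when $p-2-\delta<0$ the bound $(1+X_{t+s})^{p-2-\delta}\leq(1+x)^{p-2-\delta}$ should be replaced by $\leq 1$, which changes nothing since either way the bound is a constant depending only on $x$.
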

\begin{proof}
Under the stated conditions,   Lemma \ref{semilem} applies.
In particular,   (\ref{semieq4}) shows that   for any $\eps>0$ there is $C< \infty$, not depending on $x$,
such that for   $s \geq t$,
\[ \Exp [ f_{1+r,\nu}(X_{(s+1) \wedge ( t +\sigma_{x,t} )} ) - f_{1+r,\nu}(X_{s \wedge (t+\sigma_{x,t})} ) \mid \F_s ]
\leq C (1+ X_{s})^{r-1+\eps}   \1 \{ s -t < \sigma_{x,t} \} .\]
Suppose that $X_t \leq x$.
For $t \leq s < t + \sigma_{x,t}$, $X_s \in [0, x ]$, so
writing $b(x) = C \max_{y \in \SS_x}   (1+ y)^{r-1+\eps}   < \infty$, conditioning on $\F_t$
and taking expectations we obtain, on $\{ X_t \leq x\}$, a.s.,
 \[ \Exp [ f_{1+r,\nu}(X_{(s+1) \wedge (t+\sigma_{x,t}) } ) \mid \F_t ] - \Exp [ f_{1+r,\nu}(X_{s \wedge (t+\sigma_{x,t})} ) \mid \F_t ]
\leq b(x)     \Pr [ \sigma_{x,t} > s -t  \mid \F_t ] .\]
Let $u > t$ be an integer. Summing from $s= t$ to $u-1$ we have, on $\{ X_t \leq x \}$, a.s.,
\begin{align*} \Exp [ f_{1+r,\nu}(X_{u \wedge (t+\sigma_{x,t})} )\mid \F_t ] & \leq \Exp [ f_{1+r,\nu}(X_{t} ) \mid \F_t ] + b(x) \sum_{s=0}^\infty
\Pr [ \sigma_{x,t} > s \mid \F_t ] \\
& \leq a(x) + b(x) \Exp[ \sigma_{x,t}
\mid \F_t ]
 ,\end{align*}
writing $a (x) = \max_{y \in \SS_x} f_{1+r,\nu}(y) < \infty$.
The final part of Lemma \ref{sigmalem} then shows that there is $K < \infty$, depending on $x$, for which, for
all $u > t$, $\Exp [ f_{1+r,\nu}(X_{u \wedge (t+\sigma_{x,t})} )\mid \F_t ] \leq K$ a.s.; letting $u \to \infty$, Fatou's lemma completes the proof.
 \end{proof}

\subsection{Proofs of main results from Section \ref{sec:results}}
\label{sec:proofsthms}

 \begin{proof}[Proof of Proposition \ref{dichot}.]
    The first statement of the proposition
     follows from Lemma \ref{irrlem}.
   Now from part (b) of (A3) with a repeated conditioning argument,
   \begin{align*} & {}~~ \Pr [ N > k+1 ] \\
   & = \Pr[ \eta_{k+1} < \infty, \eta_k < \infty,\ldots, \eta_1 < \infty ]  \\
&   = \Pr [ \eta_{k+1} < \infty \mid \tau_k < \infty ] \Pr [ \eta_k < \infty \mid \tau_{k-1} < \infty]
   \cdots \Pr [ \eta_2 < \infty \mid \eta_1 < \infty ] \Pr [ \eta_1 < \infty ] \\ &
   = ( \Pr [\eta_1 < \infty ] )^k.\end{align*}
   If $\Pr [ \eta_1 < \infty ] <1$, this implies that $N < \infty$ a.s.,
    so that $X_t = 0$ f.o., and   Lemma \ref{irrlem}
   shows  that $X_t \to \infty$.
   On the other hand, if $\Pr [ \eta_1 < \infty ]  = 1$ we have that $\Pr [ N > k ] =1$ for any $k$,
   so  $N = \infty$ a.s.\ and hence $X_t =0$ i.o., i.e., $\liminf_{t \to \infty} X_t = 0$, a.s., as claimed.
      \end{proof}

   We  now sketch the proof of Theorem \ref{recthm}.

\begin{proof}[Proof of Theorem \ref{recthm}.] Under slightly different conditions,
this result follows from results of
   \cite{lamp1,lamp3,mai}.
   The results in \cite{lamp1} apply to a more general class
   of processes than we consider here, with a slightly stronger version of (\ref{moms}), while \cite{lamp3} and \cite{mai} state their results in the Markovian setting,
   although their methods  work (as in \cite{lamp1}) in the more general setting; concretely, one can use our Lemma \ref{semilem} (and a variant thereof for $|r|=1$, provided
   by calculations similar to those in \cite{mai})
   together with the results from \cite{lamp1} or \cite{aim}, for instance.
   These papers use a slightly different definition of recurrence
   to ours, but Lemma \ref{irrlem} shows that the definitions are equivalent
   under (A1).
   \end{proof}

 Next we give the proof of Theorem \ref{lem4} on the tail of $M_1 = \max_{1 \leq s \leq \eta_1} X_s$.

    \begin{proof}[Proof of Theorem \ref{lem4}.]
    Throughout the proof fix $r >-1$.
First we prove the lower   bound in (\ref{mtails}). Fix $\nu >0$.
We ease notation by writing $f$ for $f_{1+r,\nu}$ as defined in Section \ref{sec:lyap};
 for $r>-1$ and $\nu>0$, $f$ is nondecreasing on $[0,\infty)$ and $f(z) \to \infty$ as $z \to \infty$.
Lemma \ref{semilem} implies that $ f (X_t)$ satisfies a local submartingale property;   to achieve uniform integrability, we   work
with a truncated version of $f$, namely
$h_x (z) := \min \{ f  (z) , f  ( 2 x) \}$, for fixed $x >0$.
For any $\eps \in (0,1)$, on $\{ X_t \leq x\}$, for all $x$ sufficiently large, $E_\eps (t)$
implies that $X_{t+1} < 2x$. Hence, for any $\eps \in (0,1)$, on $\{ X_t \leq x \}$,
\[ h_x (X_{t+1} ) - h_x (X_t) \geq \left( f  (X_{t+1} ) - f  (X_t) \right) \1_{E_\eps (t) }
- f  (X_t) \1 \{ \Delta_t > (1+ X_t)^{1-\eps} \} ,\]
so that
\begin{align*}
\Exp [ h_x (X_{t+1} ) - h_x (X_t) \mid \F_t ]
& \geq \Exp \left[ \left( f  (X_{t+1} ) - f  (X_t) \right) \1_{E_\eps (t) } \mid \F_t \right]
 - f  (X_t) \Pr [ E_\eps^{\rm c} (t) \mid \F_t ] .\end{align*}
By Lemma \ref{deltail} with  $q=0$, for $\eps>0$ small enough,
$f  (X_t) \Pr [ E_\eps^{\rm c} (t) \mid \F_t ]  = O (X_t^{r-1-(\eps/2)} )$, a.s.;
 with the $\gamma = 1+r$ case of (\ref{eq29}) this shows, as in the proof of
Lemma \ref{semilem}, on $\{ X_t \leq x \}$,
\[ \Exp [ h_x (X_{t+1} ) - h_x (X_t) \mid \F_t ]
\geq ( \nu ( 1+ r) (s^2 /2) + o(1) ) X_t^{r-1} \log^{\nu-1} X_t ,\]
which is positive for all $X_t$ sufficiently large, since $\nu >0$, $r > -1$, and $s^2 >0$.
Thus there exists   $A \in (0,\infty)$  such that, for all $x > A$,
 \begin{equation}
 \label{hx}
 \Exp[ h_x (X_{t+1} )-
h_x (X_t) \mid \F_t ] \geq 0 , ~ \textrm{on} ~ \{ A \leq X_t \leq x \}, \as \end{equation}
Choose $\lambda \in (0,\infty)$ with  $\lambda > \max_{z \in \SS_A} h_x(z)$.
Since $f(y) \to \infty$ as $y \to \infty$,
we can (and do) choose $y \in \SS \cap (A,\infty)$ such that
  $f(y ) > 2 \lambda $. Take $x>y$.
Define the stopping times
\begin{align*}
\kappa_1 & := \min \{ t \in \N : X_t =y \}, \\
\kappa_2 & := \min \{ t > \kappa_1 : X_t \geq x \}, \\
\kappa_3 & := \min \{ t > \kappa_1 : X_t \leq A  \} .\end{align*}
By Lemma \ref{irrlem} and the fact that for $r \geq -1$, $X$ is recurrent (by Theorem \ref{recthm}), $\kappa_i < \infty$ a.s., for each $i \in \{1,2,3\}$.
We consider $(h_x ( X_{t \wedge \kappa_2 \wedge \kappa_3}) )_{t \geq \kappa_1}$, which is a submartingale by (\ref{hx}).
Also, $(h_x ( X_{t \wedge \kappa_2 \wedge \kappa_3}) )_{t \geq \kappa_1}$ is uniformly
integrable (since it is bounded above by $f (2x) < \infty$), so
$h_x(X_{t \wedge \kappa_2 \wedge \kappa_3})$ converges a.s.\ and in $L^1$ to  $h_x(X_{\kappa_2 \wedge \kappa_3})$
as $t \to \infty$. Hence,
\begin{align*} 2 \lambda & \leq \Exp [ h_x ( X_{\kappa_2 \wedge \kappa_3} ) \mid \F_{\kappa_1} ]  \\
& = \Exp [ h_x ( X_{\kappa_2  } ) \1 \{ \kappa_2 < \kappa_3 \} \mid \F_{\kappa_1} ]
+ \Exp [ h_x ( X_{\kappa_3  } ) \1 \{ \kappa_3 < \kappa_2 \} \mid \F_{\kappa_1} ] \\
& \leq  f  (2x) \Pr [ \kappa_2 < \kappa_3 \mid \F_{\kappa_1} ]
+ \lambda \Pr [ \kappa_3 < \kappa_2 \mid \F_{\kappa_1} ] ,\end{align*}
since $h_x (X_{\kappa_3}) \leq f (X_{\kappa_3}) \leq \lambda$  and $h_x ( X_{\kappa_2} ) \leq f  (2x)$ a.s.; re-arranging   we obtain
\begin{equation}
\label{w1}
 \Pr [ \kappa_2 < \kappa_3 \mid \F_{\kappa_1} ]
   \geq \frac{\lambda}{f  (2x) - \lambda} , \as  \end{equation}
  Hence, by (\ref{w1}),
$\Pr [ \kappa_2 < \kappa_3 \mid \F_{\kappa_1} ] \geq 1/ f  (3x)$ for all $x$ large enough. Finally,
\[ \Pr [ \kappa_2 < \eta_1 ] \geq \Exp \left[ \1 \{ \kappa_1 < \eta_1 \} \Pr[ \kappa_2 < \kappa_3 \mid \F_{\kappa_1} ] \right]
\geq \frac{\Pr [ \kappa_1 < \eta_1 ]}{f  (3x)} \geq \frac{1}{f (4x)} ,\]
for all $x$ sufficiently large, by Lemma \ref{yhit}.
On $\{ \kappa_2 < \eta_1 \}$, we have $M_1 = \max_{1 \leq s \leq \eta_1} X_s \geq x$.
Thus we obtain
the lower bound in (\ref{mtails}), since $\nu>0$ was arbitrary.

We now prove the upper bound in (\ref{mtails}).
Fix $\eps>0$ and now write $f$ for $f_{1+r,-\eps}$.  By Lemma
\ref{semilem}, there is $A \in (0, \infty)$ such that
 $\Exp[ f (X_{t+1} )-
f(X_t) \mid \F_t ] \leq 0$ on $\{ X_t  \geq A\}$, a.s.  Since $r>-1$, there
exists $x_0 \geq A$ such that
  $f$ is increasing on $[x_0,\infty)$.
Take $x > x_1 > x_0$; $x_1$ will be fixed  later.
 Define stopping times recursively by $\beta_0 :=1$ and for $n \in \N$,
\begin{align*} \alpha_n & := \min \{ t > \beta_{n-1} : X_t > x_1 \} ,\\
\beta_n & := \min \{ t > \alpha_n : X_t \leq x_0 \}, \\
\gamma_n & := \min \{ t > \alpha_n : X_t > x \} .\end{align*}
By Lemma \ref{irrlem} and the fact that $X$ is recurrent, $\alpha_n$, $\beta_n$ and $\gamma_n$ are a.s.\ finite
for all $n$.

By Lemma \ref{semilem}, $(f(X_{t \wedge \beta_n \wedge \gamma_n}))_{t \geq \alpha_n}$ is a nonnegative
supermartingale,
and as $t \to \infty$ it converges a.s.\ to $f(X_{\beta_n \wedge \gamma_n})$.
By Fatou's lemma, $\Exp [ f (X_{\beta_n \wedge \gamma_n }) \mid \F_{\alpha_n} ] \leq f (X_{\alpha_n})$.
Moreover, $ \Exp [ f (X_{\beta_n \wedge \gamma_n }) \mid \F_{\alpha_n} ] \geq \Pr [ \gamma_n < \beta_n \mid \F_{\alpha_n} ]
f(x)$, since $x > x_0$.
It follows that for all $x >x_1$,
\begin{equation}
\label{eq80}
 \Pr [ \gamma_n < \beta_n \mid \F_{\alpha_n} ] \leq \frac{f (X_{\alpha_n})}{f(x)}. \end{equation}
Lemma \ref{lem55} shows that
$\Exp [ f(X_{\alpha_n} ) ] \leq K$ for some $K< \infty$ depending on $x_1$ but not on $x$.
Thus taking expectations in (\ref{eq80}) we obtain, for some $K <\infty$ and all $x > x_0$,
\[  \Pr [ \gamma_n < \beta_n ] \leq K/ f(x) .\]
Moreover, it follows from Lemma \ref{hit0} that we may choose $x_1 > x_0$ large enough such that,
for some $\delta>0$,
$\Pr [ \eta_1 < \alpha_{n+1} \mid \F_{\beta_n} ] \geq \delta$, a.s.,
for all $n$.
Thus we fix such an $x_1$. In particular, we then have that
$\Pr [ \eta_1 < \beta_{n+1} \mid \F_{\beta_n} ] \geq \delta$, a.s.  Let $J := \min \{ n \in \N : \eta_1 < \beta_n  \}$.
Then $J$ is stochastically dominated by a geometric random variable with parameter $\delta$, and in particular
$\Pr [ J > n ] \leq (1-\delta)^n \leq  \re^{-\delta n}$. Hence
\[ \Pr [ M_1 \geq x ] \leq \Pr \Big[ \bigcup_{n=1}^J \{ \gamma_n < \beta_n \} \Big] \leq \Pr [ J \geq \lfloor k \log x\rfloor ]
+ \sum_{n=1}^{k \log x} \Pr [ \gamma_n < \beta_n ] \leq \frac{C \log x}{f(x)} ,\]
for some $C< \infty$ and all $x > x_1$,
choosing $k$ large enough.
 \end{proof}

We can now state a result on the `maximum of the maxima' in the first $n$ excursions.

\begin{lm}
\label{lem6}
 Suppose that (A0)--(A3) hold.
   Suppose that $r > -1$ and  (\ref{moms}) holds with $p > \max \{ 2, 1+r \}$.
For any $\eps >0$,
a.s., for all but finitely many $n$,
\[ n^{\frac{1}{1+r}} (\log n)^{-\frac{1}{1+r} - \eps} \leq \max_{1 \leq i \leq n} M_i \leq  n^{\frac{1}{1+r}} (\log n)^{\frac{2}{1+r} + \eps} .\]
\end{lm}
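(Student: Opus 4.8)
The plan is to deduce Lemma \ref{lem6} directly from the single-excursion tail bounds of Theorem \ref{lem4} together with the extreme-value estimates for i.i.d. sequences in Lemma \ref{lem7}. First I would record the structural fact that, since $r > -1$, Theorem \ref{recthm} gives recurrence, so by Proposition \ref{dichot} we have $\Pr[\eta_1 < \infty] = 1$ and hence $N = \infty$ a.s.; then part (a) of (A3) makes $(\EE_n)_{n\in\N}$ an i.i.d.\ sequence, and consequently $(M_n)_{n \in \N}$ is an i.i.d.\ sequence of $[0,\infty)$-valued random variables, so Lemma \ref{lem7} is applicable with $\zeta_i = M_i$.

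For the upper bound, fix $\eps > 0$ and apply Theorem \ref{lem4} with parameter $\eps$: for all $x$ large enough, $\Pr[M_1 \geq x] \leq x^{-1-r}(\log x)^{1+\eps}$, which is exactly the hypothesis (\ref{iid1}) with $\theta = 1+r \in (0,\infty)$ and $\phi = 1+\eps$. Lemma \ref{lem7}(i) then yields that for any $\eps' > 0$, a.s.\ for all but finitely many $n$,
\[ \max_{1 \leq i \leq n} M_i \leq n^{\frac{1}{1+r}} (\log n)^{\frac{2+\eps}{1+r} + \eps'} . \]
Since $\eps$ and $\eps'$ were arbitrary, choosing them small enough (so that $\frac{2+\eps}{1+r} + \eps' \leq \frac{2}{1+r} + \eps_0$ for any prescribed $\eps_0 > 0$) gives the claimed upper bound. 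For the lower bound, again fix $\eps > 0$ and use the lower tail estimate of Theorem \ref{lem4}, $\Pr[M_1 \geq x] \geq x^{-1-r}(\log x)^{-\eps}$ for $x$ large, which is (\ref{iid2}) with $\theta = 1+r$ and $\phi = -\eps$. Lemma \ref{lem7}(ii) then gives, for any $\eps' > 0$, a.s.\ for all but finitely many $n$,
\[ \max_{1 \leq i \leq n} M_i \geq n^{\frac{1}{1+r}} (\log n)^{-\frac{1+\eps}{1+r} - \eps'} , \]
and once more the arbitrariness of $\eps, \eps'$ delivers the stated bound $n^{\frac{1}{1+r}}(\log n)^{-\frac{1}{1+r} - \eps_0}$.

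There is no genuine analytic obstacle here: the lemma is a packaging of two results already proved. The only point requiring a little care is the bookkeeping of the logarithmic exponents — one must check that the exponents $\frac{\phi+1}{\theta}$ and $\frac{\phi-1}{\theta}$ produced by Lemma \ref{lem7}, when fed the values $\phi = 1+\eps$ and $\phi = -\eps$ from Theorem \ref{lem4}, collapse (after letting $\eps, \eps' \downarrow 0$) to $\frac{2}{1+r}$ and $-\frac{1}{1+r}$ respectively, with all implicit constants absorbed into an arbitrarily small power of $\log n$. Thus the argument is essentially three lines once the i.i.d.\ property is noted.
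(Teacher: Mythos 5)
Your proposal is correct and follows exactly the paper's route: the paper proves Lemma \ref{lem6} in one line by applying the tail bounds of Theorem \ref{lem4} to the i.i.d.\ maxima $(M_n)$ via Lemma \ref{lem7}, which is precisely what you do, with the logarithmic-exponent bookkeeping carried out correctly.
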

\begin{proof}  Apply the tail bounds in Theorem \ref{lem4}
together with Lemma \ref{lem7}. \end{proof}

A key result for several of our remaining theorems is Lemma \ref{lowbound}.
It provides lower bounds on excursion functionals, and in particular gives a new approach to a lower tail bound
for $\eta_1$, which has  advantages  over previous approaches: the results
in \cite{aim} are not as sharp, while the results in \cite{ai1} require  uniformly bounded increments
for the process.

\begin{lm}
\label{lowbound}
Suppose that (A0) and (A1) hold, and there exists $B < \infty$ such that
$\Exp [ \Delta_t^2 \mid \F_t ] \leq B$, a.s., for all $t \in \N$, and there exist $x_0,c \in (0,\infty)$ such that, on $\{ X_t \geq x_0 \}$,
$\Exp [ \Delta_t \mid \F_t ] \geq - c/X_t$, a.s., for all $t \in \N$.
  Let $\Phi : \SS \to [0,\infty)$ be a nondecreasing function.
 Then there exists $\eps >0$ such that for all $y$ sufficiently large,
\[ \Pr [ M_1 \geq y ] \leq 2 \Pr \left[ \sum_{t=1}^{\eta_1} \Phi  (X_t) \geq \eps y^2 \Phi ( y/2) \right] .\]
In particular, $\Pr [  \eta_1 \geq x ] \geq \frac{1}{2} \Pr [ M_1 \geq (x/\eps)^{1/2} ]$ for all $x$ sufficiently large.
\end{lm}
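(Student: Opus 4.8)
The plan is to exploit the heuristic behind the lemma: on the event $\{M_1\ge y\}$, once the first excursion reaches level $y$ the drift is at worst $-c/X_t\ge -2c/y$ as long as the walk stays above $y/2$, so it takes of order $y^2$ steps for the walk to fall from $y$ down to $y/2$; throughout that descent $X_t\ge y/2$, and since $\Phi$ is nondecreasing every such term satisfies $\Phi(X_t)\ge\Phi(y/2)$, whence $\sum_{t=1}^{\eta_1}\Phi(X_t)\ge(\text{descent duration})\times\Phi(y/2)\gtrsim y^2\Phi(y/2)$. So it suffices to show that such a slow descent occurs with probability at least $\tfrac12$, conditionally on reaching level $y$.

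To set this up I would fix $y$ large, let $\varsigma$ be the first time during the first excursion that $X_t\ge y$ (a stopping time with $\{\varsigma<\infty\}=\{M_1\ge y\}\in\F_\varsigma$), and let $T:=\min\{t>\varsigma:X_t<y/2\}$. On $\{\varsigma<\infty\}$ one has $T\le\tau_1$ (since $X_{\tau_1}=0<y/2$), so the time indices $\varsigma,\varsigma+1,\dots,T-1$ all lie in $\{\tau_0,\dots,\tau_1-1\}$ with $X_t\ge y/2$ for each. The central claim is: for $\eps>0$ small enough (depending only on $B$ and $c$) and all $y$ large, $\Pr[\,T-\varsigma>\eps y^2\mid\F_\varsigma\,]\ge\tfrac12$ a.s.\ on $\{\varsigma<\infty\}$. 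Granting this, on $\{M_1\ge y\}\cap\{T-\varsigma>\lfloor\eps y^2\rfloor\}$ one gets $\sum_{t=1}^{\eta_1}\Phi(X_t)\ge(T-\varsigma)\Phi(y/2)>\eps y^2\Phi(y/2)$, and conditioning on $\F_\varsigma$ (using $\{M_1\ge y\}\in\F_\varsigma$) yields $\Pr[\sum_{t=1}^{\eta_1}\Phi(X_t)\ge\eps y^2\Phi(y/2)]\ge\tfrac12\Pr[M_1\ge y]$, which is the first assertion; the ``in particular'' then follows by taking $\Phi\equiv1$ and substituting $y=(x/\eps)^{1/2}$.

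For the central claim, set $h:=\lfloor\eps y^2\rfloor$ and decompose the increments after time $\varsigma$: for $1\le t\le h$ write $D_t:=X_{\varsigma+t}-X_{\varsigma+t-1}$, $m_t:=\Exp[D_t\mid\F_{\varsigma+t-1}]$, $\xi_t:=D_t-m_t$. On $\{T-\varsigma\ge t\}\in\F_{\varsigma+t-1}$ one has $X_{\varsigma+t-1}\ge y/2\ge x_0$ (for $y\ge2x_0$), so $m_t\ge-2c/y$ and $\Exp[\xi_t^2\mid\F_{\varsigma+t-1}]\le\Exp[\Delta_{\varsigma+t-1}^2\mid\F_{\varsigma+t-1}]\le B$. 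For the stopped martingale sum $W:=\sum_{t=1}^{h}\xi_t\1\{T-\varsigma\ge t\}$, orthogonality of the increments gives $\Exp[W^2\mid\F_\varsigma]\le B\,\Exp[(T-\varsigma)\wedge h\mid\F_\varsigma]\le Bh\le B\eps y^2$. On $\{T-\varsigma=n\}$ with $1\le n\le h$ one has $X_\varsigma\ge y$, $X_{\varsigma+n}<y/2$, and $X_\varsigma-X_{\varsigma+n}=-\sum_{t=1}^nm_t-W\le2c\eps y-W$, so $|W|\ge(\tfrac12-2c\eps)y\ge y/4$ once $\eps\le1/(8c)$. Hence $\{T-\varsigma\le h\}\subseteq\{|W|\ge y/4\}$, and Chebyshev's inequality gives $\Pr[T-\varsigma\le h\mid\F_\varsigma]\le16\,\Exp[W^2\mid\F_\varsigma]/y^2\le16B\eps$, which is $\le\tfrac12$ as soon as $\eps\le1/(32B)$; so $\eps:=\min\{1/(8c),1/(32B)\}$ works.

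The hard part is precisely this central claim: the heuristic only delivers that the \emph{expected} descent time is of order $y^2$, and turning that into an honest probabilistic lower bound — crucially \emph{without} a Markov assumption, using only the conditional increment moment bounds — is the crux. The resolution above is the observation that a descent faster than $\eps y^2$ forces the martingale part of the displacement to attain size of order $y$ over only $\eps y^2$ steps, an event of probability $O(B\eps)$ by a second-moment estimate. Everything else is routine bookkeeping; the only other points needing a little care are that $\varsigma$ and $T$ are stopping times with $\{M_1\ge y\}\in\F_\varsigma$, and that the index set $\{\varsigma,\dots,T-1\}$ lies inside $\{\tau_0,\dots,\tau_1-1\}$ so that $(T-\varsigma)\Phi(y/2)$ really is a lower bound for $\sum_{t=1}^{\eta_1}\Phi(X_t)$.
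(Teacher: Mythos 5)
Your proof is correct, and its overall architecture is the same as the paper's: on the event that the excursion reaches level $y$, show that the descent from $y$ to $y/2$ takes at least $\eps y^2$ steps with conditional probability at least $\tfrac12$, then convert the $\geq \eps y^2$ time indices with $X_t \geq y/2$ into the stated bound on $\sum_{t\le\eta_1}\Phi(X_t)$ using monotonicity of $\Phi$, and obtain the $\eta_1$ statement from the case $\Phi\equiv 1$. Where you differ is in how the key descent-time estimate is proved. The paper works with the Lyapunov function $W_t=(y-X_t)^2\1\{X_t<y\}$, shows its conditional increments are bounded by a constant on $\{X_t>y/2\}$ using the drift and second-moment hypotheses, and then invokes an external maximal inequality (Lemma 3.1 of \cite{mvw}) to conclude that $W$ is unlikely to reach $y^2/4$ within $\eps y^2$ steps. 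You instead perform a Doob-type decomposition of the displacement after the hitting time, bound the predictable part by $2c\eps y$ on the relevant event, and control the martingale part of the stopped sum by orthogonality of martingale differences plus a conditional Chebyshev inequality; since a fast descent forces this single stopped random variable to exceed $y/4$, no maximal inequality is needed. Your route is self-contained (it avoids the citation to \cite{mvw}) and produces explicit admissible constants, e.g.\ $\eps=\min\{1/(8c),1/(32B)\}$; the paper's route packages the same second-moment information into a supermartingale-increment bound and a ready-made maximal inequality, which is marginally shorter on the page and reuses machinery already needed elsewhere in the paper. Both treat the minor technical points (measurability of $\{M_1\ge y\}$ in $\F_\varsigma$, transfer of the increment bounds to the random times $\varsigma+t-1$, and the inclusion of the descent interval in the first excursion) correctly, so I see no gap.
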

\begin{proof}
Let $y > 2x_0$. Define stopping times
 \[ \kappa_1 = \min \{ t \in \N : X_t \geq y \} ; ~~~ \kappa_2 = \min \{ t \geq \kappa_1 : X_t \leq y/2 \} .\]
 Note that $\kappa_1 < \infty$ a.s., by Lemma \ref{irrlem}, and
 $\{ \kappa _1 < \eta_1 \}$,   the event that
 $X$ reaches $[y,\infty)$ before returning to $0$, is $\F_{\kappa_1}$ measurable.
 Then, for any $\eps >0$,
 \begin{align}
 \label{eq5} \Pr \left[ \{ \kappa_1 < \eta_1 \} \cap \{ \kappa_2 \geq \kappa_1 + \eps y^2 \} \right]
 = \Exp \left[ \1 \{ \kappa_1 < \eta_1 \} \Pr [ \kappa_2 \geq \kappa_1 + \eps y^2 \mid \F_{\kappa_1} ] \right] .\end{align}
 We claim that we may choose $\eps >0$ for which, for all $y$ sufficiently large,
 \begin{equation}
 \label{claim1} \Pr [ \kappa_2 \geq \kappa_1 + \eps y^2 \mid \F_{\kappa_1  } ]
 \geq \frac{1}{2} , \as
 \end{equation}
 To verify (\ref{claim1}), let $W_t = (y - X_t )^2 \1 \{ X_t < y \}$.
 Then on $\{ X_t \geq y \}$, $W_{t+1} - W_t \leq  \Delta_t ^2$, so that
 $\Exp [ W_{t+1} - W_t \mid \F_t ] \leq B$, a.s.,
 on $\{X_t \geq y\}$.
 On the other hand,
 on $\{ X_t < y \}$,
 \begin{align*} W_{t+1} - W_t & \leq (W_{t+1} - W_t ) \1 \{ X_{t+1} < y \} \\
 & \leq -2 (y - X_t )  \Delta_t \1 \{ \Delta_t < y - X_t \} + \Delta_t^2.\end{align*}
 Here we have that
 \begin{align*} -2 (y - X_t )  \Delta_t \1 \{ \Delta_t < y - X_t \}
 & = - 2 (y-X_t) \Delta_t + 2 (y-X_t) \Delta_t \1 \{ \Delta_t > y - X_t \} \\
 & \leq - 2 (y-X_t) \Delta_t + 2 \Delta_t^2 .\end{align*}
 Taking expectations we see that, on $\{ x_0 \leq X_t < y\}$, a.s.,
 \begin{align*}
   \Exp [ W_{t+1} - W_t \mid \F_t ] \leq -2(y -X_t ) \Exp [ \Delta_t \mid \F_t ] + 3 \Exp [ \Delta_t^2 \mid \F_t ] \leq \frac{2 c  y}{X_t} + 3B.
   \end{align*}
   In particular, there exists  $C <\infty$ such that, on $\{ X_t > y/2 \}$,
$\Exp [ W_{ t+1 } - W_{t} \mid \F_t ] \leq C$, a.s. Hence we conclude that, for any $t\geq 0$,
$\Exp [ W_{(\kappa_1 + t+1)\wedge \kappa_2} - W_{(\kappa_1+t) \wedge \kappa_2 } \mid \F_{\kappa_1 +t } ] \leq C$, a.s.
   Then an appropriate maximal inequality (Lemma 3.1 of \cite{mvw}) implies that
   \[ \Pr \Big[ \max_{0 \leq s \leq t} W_{(\kappa_1 +s) \wedge \kappa_2} \geq w \mid \F_{\kappa_1} \Big] \leq C t/w , \as, \]
   using the fact that $W_{\kappa_1} = 0$.
But  $W_{\kappa_2} = (y - X_{\kappa_2} )^2 \geq y^2 /4$, so
 \[ \Pr [ \kappa_2 \leq \kappa_1 + t \mid \F_{\kappa_1} ]
 \leq \Pr \Big[ \max_{0 \leq s \leq t} W_{(\kappa_1 + s) \wedge \kappa_2} \geq (y^2/4) \mid \F_{\kappa_1} \Big]
\leq    \frac{4Ct}{y^2  } , \as,  \]
and choosing $t=\eps y^2$ for $\eps>0$ sufficiently small (not depending on $y$), the claim (\ref{claim1}) follows.
Combining (\ref{eq5}) and (\ref{claim1}) we get
\[  \Pr \left[ \{ \kappa_1 < \eta_1 \} \cap \{ \kappa_2 \geq \kappa_1 + \eps y^2 \} \right]
\geq \frac{1}{2} \Pr [ \kappa_1 < \eta_1 ] = \frac{1}{2} \Pr [ M_1 \geq y ] .\]
On $\{ \kappa_1 < \eta_1 \} \cap \{ \kappa_2 \geq \kappa_1 + \eps y^2 \}$,
$X_s \geq y/2$ for all $\kappa_1 \leq s < \kappa_2$, of which there are
at least $\eps y^2$ values, all before time $\eta_1$; since $\Phi$ is nondecreasing
we obtain the result. \end{proof}

To obtain an upper tail bound on $\eta_1$, one of
the technical ingredients that we need is the following
 consequence of
 Theorem $2'$ of \cite{ai2},
 which extended results in \cite{aim}.

 \begin{lm}
 \label{tail2}
  Suppose that $(Y_t)_{t \in \N}$ is an $(\F_t)_{t \in \N}$-adapted
 stochastic process on an unbounded subset of $[0,\infty)$.
 Let $T_A := \min \{ t \in \N : Y_t \leq A \}$.
 Suppose that there exist $p>0$, $\nu \in \R$,  and $\delta>0$ such that
 \begin{equation}
 \label{tail2eq} \Exp [ Y_{t+1}^{2p}  \log^\nu Y_{t+1}   - Y_{t}^{2p}  \log^\nu Y_{t}    \mid \F_t ]
 \leq -\delta Y_t^{2p-2}  \log^{\nu -1} Y_t ,   ~\textrm{on}~ \{ T_A > t\}. \end{equation}
 Then  for some $C < \infty$,
$\Pr [ T_A \geq x \mid Y_1 = x_0 ] \leq C x^{-p}  ( \log   x )^{p-\nu} x_0^{2p} (\log x_0)^\nu$.
 \end{lm}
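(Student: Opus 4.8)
The plan is to recognise the hypothesis (\ref{tail2eq}) as a Lyapunov-type negative-drift condition for the function $g(y) := y^{2p} \log^\nu y$, and then to appeal to the tail-of-passage-time machinery of Aspandiiarov and Iasnogorodski. Precisely, Theorem~$2'$ of \cite{ai2} (which refines the passage-time moment estimates of \cite{aim}) gives, under a hypothesis of exactly the shape (\ref{tail2eq}), a polynomial tail bound with a logarithmic correction for the first entry time $T_A$ into a bounded set; the exponent $2p$ and the logarithmic index $\nu$ of our $g$ play the role of the free parameters there, and after this translation the bound $\Pr[T_A \geq x \mid Y_1 = x_0] \leq C x^{-p} (\log x)^{p-\nu} x_0^{2p} (\log x_0)^\nu$ is read off directly. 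The only genuine work in this route is to check that $g$ is admissible: nonnegative and eventually increasing on $[A,\infty)$ (so one takes $A$ large, which is the case needed in the applications), with the negative drift quantified as in (\ref{tail2eq}). This route works uniformly in $p>0$.

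For intuition, and because it is the quickest self-contained route in the main case, it is worth isolating the underlying level-crossing argument. By (\ref{tail2eq}) the right-hand side is nonpositive, so $(g(Y_{t \wedge T_A}))_t$ is a nonnegative supermartingale. Fix a level $M = M(x)$ and set $\sigma_M := \min\{ t : Y_t > M \}$; assume $x_0 \leq M$ (the claim is trivial otherwise for $x$ large). The supermartingale property at $T_A \wedge \sigma_M$ (via Fatou), together with $g(Y_{\sigma_M}) \geq g(M)$ on $\{ \sigma_M < T_A \}$, gives $\Pr[ \sigma_M < T_A \mid Y_1 = x_0 ] \leq g(x_0)/g(M)$. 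On the other hand, while $Y_t \in (A, M]$ the drift in (\ref{tail2eq}) is at most $-\delta_M$, where $\delta_M := \delta \inf_{A < y \leq M} y^{2p-2} \log^{\nu-1} y$; hence $g(Y_{t \wedge T_A \wedge \sigma_M}) + \delta_M (t \wedge T_A \wedge \sigma_M)$ is a supermartingale, so $\Exp[ T_A \wedge \sigma_M \mid Y_1 = x_0 ] \leq g(x_0)/\delta_M$ and Markov's inequality gives $\Pr[ T_A \wedge \sigma_M \geq x \mid Y_1 = x_0 ] \leq g(x_0)/(\delta_M x)$. Since $\{ T_A \geq x \} \subseteq \{ \sigma_M < T_A \} \cup \{ T_A \wedge \sigma_M \geq x \}$, adding the two bounds and, for $p \leq 1$, inserting the estimate $\delta_M \asymp M^{2p-2} (\log M)^{\nu-1}$, one balances the two terms by choosing $M$ with $M^2 \log M \asymp x$, i.e.\ $M \asymp \sqrt{x/\log x}$; then $g(M) \asymp x^p (\log x)^{\nu-p}$ and $\delta_M x \asymp x^p (\log x)^{\nu-p}$, so each term is $\asymp g(x_0)\, x^{-p} (\log x)^{p-\nu} = C x^{-p}(\log x)^{p-\nu} x_0^{2p} (\log x_0)^\nu$, as required.

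I expect two points to need care. The first is bookkeeping: the precise logarithmic exponent $p-\nu$ is forced by the choice $M \asymp \sqrt{x/\log x}$ together with the asymptotics of $g(M)$ and $\delta_M$, and the behaviour of $\delta_M = \delta \inf_{A < y \leq M} y^{2p-2} \log^{\nu-1} y$ depends on the sign of $2p-2$ and on $\nu$, so this estimate must be carried out carefully (for $p \leq 1$ the infimand is, up to constants, decreasing, which is what makes $\delta_M \asymp M^{2p-2}(\log M)^{\nu-1}$). The second, and more serious, is the regime $p > 1$: there the crude bound $\Pr[ T_A \wedge \sigma_M \geq x ] \leq g(x_0)/(\delta_M x)$ uses only the first moment of $T_A \wedge \sigma_M$ and is too weak to yield the $x^{-p}$ decay. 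To handle it one must iterate the Lyapunov construction over the functions $y \mapsto y^{2q} \log^{\nu'} y$ for $q$ increasing towards $p$ — the inductive scheme behind \cite{aim} and \cite{ai2} — and this iteration is the reason it is cleanest to quote Theorem~$2'$ of \cite{ai2} for the statement in full generality rather than to reproduce the whole argument here; it is also what I would expect to be the main obstacle to a fully self-contained proof.
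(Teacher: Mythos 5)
Your primary route coincides with the paper's: Lemma \ref{tail2} is proved there by invoking Theorem $2'$ of \cite{ai2} with exactly your Lyapunov data, $h(x)=x^{2p}(\log x)^{\nu}$ and drift bound $g(x)=x^{2p-2}(\log x)^{\nu-1}$. The one place where your account undersells the work is the claim that the tail bound is then ``read off directly'' once $g$ is admissible: Theorem $2'$ is a passage-time \emph{moment} criterion, giving $\Exp[f(T_A)\mid Y_1=x_0]\le C\,h(x_0)$ for an auxiliary increasing function $f$ subject to a compatibility condition between $f$, $g$ and $h$, and the paper's proof consists almost entirely of calibrating this: one takes $f(x)=x^{p}(\log x)^{q}$, computes the asymptotics of $f^{-1}$ and hence of $f'(f^{-1}(h(x)))$, finds that the compatibility condition (\ref{eq3}) forces $q\le \nu-p$, and only then gets the stated tail bound by Markov's inequality with the extremal choice $q=\nu-p$; the logarithmic exponent $p-\nu$ comes from this optimisation, not from the hypothesis (\ref{tail2eq}) alone. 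Your supplementary supermartingale/level-crossing argument is a genuinely different, self-contained proof on part of the parameter range, and your balancing $M\asymp\sqrt{x/\log x}$ does reproduce the bound for $p<1$; but note that at $p=1$ with $\nu>1$ the infimum defining $\delta_M$ is attained near $A$ rather than near $M$, so $\delta_M\asymp 1$ and the first-moment bound cannot do better than $g(x_0)/x$, missing the claimed $(\log x)^{1-\nu}$ factor — and, as you rightly say, for $p>1$ the first-moment argument is structurally too weak, which is precisely why both you and the paper fall back on \cite{ai2}. In short: correct as a proof by citation and in the same spirit as the paper, with a useful elementary supplement for small $p$, but the ``only genuine work'' in the citation route is the $f$-calibration plus Markov step that the paper carries out and your write-up leaves implicit.
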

 \begin{proof} We apply Theorem $2'$ of \cite{ai2}
 with, in the notation there (but with time denoted by $t$ rather than $n$)
 $X_t = Y_t$,
 $h(x) = x^{2p} (\log x)^\nu$, $U_t = h(Y_t)$, $g(x)= x^{2p-2} (\log x)^{\nu-1}$,
 and $f(x) = x^p (\log x)^q$, where $p>0$ and $\nu \in\R$ are as in the statement of the lemma and $q \in \R$
 is to be chosen later. It follows from Theorem $2'$
 that, under the conditions of the lemma, for some $C <\infty$, for any $x_0 > A$,
 \begin{equation}
 \label{fTA}
 \Exp [ f(T_A) \mid Y_1 = x_0 ] \leq C h(x_0) = C x_0^{2p} (\log x_0)^\nu ,
 \end{equation}
  provided that, writing $f^{-1}$ here for the inverse function of $f$,
 \begin{equation}
 \label{eq3}
  \liminf_{x\to \infty} \left( \frac{g(x)}{f' ( f^{-1} ( h(x)))} \right) > 0.\end{equation}
 We verify (\ref{eq3})
 for the stated $f,g,h$. We claim  there
 exists $c \in (0,\infty)$ such that
 \begin{equation}
 \label{claim2}
 f^{-1} (x) = (c + o(1) ) x^{1/p} (\log x)^{-q/p} .\end{equation}
 Since $f$ is eventually
 increasing, to verify (\ref{claim2})
  it suffices
 to show that, for an appropriate $c \in (0,\infty)$, $f ( (c +\eps) x^{1/p} (\log x)^{-q/p} )$ is eventually greater than $x$
 if $\eps>0$ but eventually less than $x$ if $\eps<0$. But we have, for $\alpha >0$,
 \begin{align*} f( \alpha x^{1/p} (\log x)^{-q/p} )
 & = \alpha^p x (\log x)^{-q} \left[ \log \alpha + p^{-1} \log x - (q/p) \log \log x \right]^q \\
& = (\alpha^p p^{-q} +o(1)) x ,\end{align*}
 which satisfies the desired properties with $\alpha = c$ provided $c^p p^{-q} = 1$, i.e.,
 $c = p^{q/p}$.
  Thus
 we obtain (\ref{claim2}). It follows that, for some $c' \in (0,\infty)$,
 \begin{align*}
  f^{-1} ( h(x))   =    (c'+o(1)) x^2 (\log x)^{\frac{\nu-q}{p}} .\end{align*}
  Now $f'(x) = (p +o(1)) x^{p-1} (\log x)^q$, so we get, for some $c'' \in (0,\infty)$,
  \[ f' ( f^{-1} (h(x))) = ( c'' + o(1) ) x^{2p-2} (\log x)^{\nu + \frac{q-\nu}{p}} .\]
  Then (\ref{eq3}) holds provided that
 $s-1 -  ( \nu + \frac{q-\nu}{p}  ) \geq 0$,
  that is, $q \leq \nu-p$.
 This shows that (\ref{fTA}) holds, and then the result of the lemma follows
 by Markov's inequality.
  \end{proof}

Now we can complete the proof of our main result on the duration of an excursion.

  \begin{proof}[Proof of Theorem \ref{etalem}.]
   First we prove the upper bound in (\ref{etatails}).
   Our starting point will be Lemma
   \ref{tail2}, which deals with the hitting time of a suitably
   large interval $[0,A]$ starting from outside that interval. Some additional work, based on the irreducibility
   assumption, is needed to relate this to the return time $\eta_1$ to $0$.
   Fix $A >0$ and $B >A$ (to be specified later).
      Define stopping times $\alpha_i$ and $\beta_i$ recursively
   by $\beta_0 :=1$ and for $n \in \N$,
   \[
   \alpha_n := \min \{ t \geq \beta_{n-1} : X_t \leq A \} , ~~~
   \beta_n := \min \{ t \geq \alpha_n : X_t \geq B \}
      ;\]
  by Lemma \ref{irrlem} and the fact that $X$ is recurrent, $\alpha_n \leq \beta_n < \infty$
  for all $n$, a.s.

    We have from the $1+r = 2p$ case of  (\ref{semieq})
   that with $Y_t = X_t$, $f_{2p,\nu} (X_t)$ satisfies (\ref{tail2eq})
  taking $2p =1+r >0$ and $\nu<0$, provided the $A$ in (\ref{tail2eq})
  is large enough. Thus take $A$ to be sufficiently large. Hence we can apply Lemma \ref{tail2}
  to show that, for any $\eps>0$,
  \[ \Pr[ \alpha_{n+1} - \beta_n \geq x  \mid \F_{\beta_n}] \leq
  x^{-\frac{1+r}{2}} (\log x)^{\frac{1+r}{2} + \eps} ( 1 + X_{\beta_n} )^{1+r}, \]
  for all $x$ large enough. Here the $\nu = 0$ case of Lemma \ref{lem55}
  shows that
  $\Exp [ ( 1 + X_{\beta_n} )^{1+r} ] \leq K < \infty$ for $K$ not depending on $x$.
So taking expectations in the last display,
  we obtain
  \begin{equation}
  \label{tb1}
   \Pr[ \alpha_{n+1} - \beta_n \geq x ]
   \leq
  x^{-\frac{1+r}{2}} (\log x)^{\frac{1+r}{2} + \eps} ,\end{equation}
  for all $x$ sufficiently large. On the other hand, for $B= B(A)$ as in Lemma \ref{hit0},
  we have that for $\varphi >0$, for all $n$,
  $\Pr [ \eta_1 < \beta_n  \mid \F_{\alpha_n} ] \geq \varphi$, a.s.
      Let $K := \min \{ n : \beta_n > \eta_1 \}$. Then
  $K$ is stochastically dominated by a geometric random variable with
  parameter $\varphi$, and in particular $\Pr [ K > n ] \leq (1-\varphi)^n \leq \re^{-\varphi n}$.
 Moreover, $\eta_1 \leq \sum_{n=1}^K (\alpha_{n+1} -\alpha_n)$. So
 \begin{align}
 \label{ss1}
  \Pr [ \eta_1 \geq x ] & \leq \Pr [ K \geq \lfloor k \log x \rfloor ]
 + \Pr \bigg[ \sum_{n=1}^{k \log x} (\alpha_{n+1} -\alpha_n ) \geq x \bigg]  \nonumber\\
 & \leq x^{-(1+r)} + k ( \log x ) \sup_n \Pr \bigg[ \alpha_{n+1} -\alpha_n \geq \frac{x}{k \log x} \bigg] ,\end{align}
 choosing $k$ sufficiently large. A similar argument to Lemma \ref{sigmalem}
 shows that $\Pr [ \beta_n - \alpha_n \geq x ] \leq \re^{-cx}$ for $c>0$ depending
  on $B$ (and hence on $A$). Then since
  \[ \Pr [ \alpha_{n+1} - \alpha_n \geq x ] \leq \Pr [ \alpha_{n+1} - \beta_n \geq x/ 2 ]
  + \Pr [   \beta_n -\alpha_n \geq x/ 2 ] ,\]
  it follows that $\alpha_{n+1} - \alpha_n$
  satisfies the same tail bound (\ref{tb1}) as $\alpha_{n+1} - \beta_n$.
  The upper bound in (\ref{etatails}) then follows from (\ref{ss1}).

  The lower bound in (\ref{etatails}) follows from the final statement in Lemma \ref{lowbound}
  together with the lower bound in Theorem \ref{lem4}.
 \end{proof}

 Now we can give a result on the total duration of the first $n$ excursions.

 \begin{lm}
 \label{etabounds} Suppose that (A0), (A1), (A2), and (A3) hold.
 \begin{itemize}
\item[(i)] Suppose that $-1 < r \leq 1$ and   (\ref{moms}) holds
with $p > 2$. Then for any $\eps >0$, a.s., for all but finitely many $n$,
\[ n^{\frac{2}{1+r}} (\log n)^{-\frac{2}{1+r} -\eps}  \leq \sum_{i=1}^n \eta_i \leq
n^{\frac{2}{1+r}} (\log n)^{\frac{6+2r}{1+r} +\eps}.\]
 \item[(ii)] Suppose that $r>1$ and   (\ref{moms}) holds
 with $p > 1+r$. Then as $n \to \infty$, a.s., $n^{-1} \sum_{i=1}^n \eta_i \to \Exp [\eta_1] \in (0,\infty)$.
 \end{itemize}
 \end{lm}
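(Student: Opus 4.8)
The plan is to reduce both parts to facts about partial sums of i.i.d.\ nonnegative random variables. Since $r>-1$ forces recurrence (Theorem \ref{recthm}), we have $\Pr[\eta_1<\infty]=1$ and hence $N=\infty$ a.s., so part (a) of (A3) makes $(\EE_n)_{n\in\N}$, and in particular the durations $(\eta_n)_{n\in\N}$, an i.i.d.\ sequence of a.s.-finite random variables whose tail is controlled by Theorem \ref{etalem}. Everything then follows from that tail information together with the general i.i.d.\ lemmas of the previous subsection.

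Part (ii) is immediate. For $r>1$ the process is positive-recurrent, and Theorem \ref{etalem} (applicable since here $p>1+r=\max\{2,1+r\}$) gives $\Exp[\eta_1^{(1+r)/2-\eps}]<\infty$ for every $\eps>0$; as $(1+r)/2>1$ and $\eta_1\geq 1$ a.s., this yields $\Exp[\eta_1]\in[1,\infty)$. Kolmogorov's strong law of large numbers for the i.i.d.\ sequence $(\eta_i)$ then gives $n^{-1}\sum_{i=1}^n\eta_i\to\Exp[\eta_1]$ a.s.

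For part (i), Theorem \ref{etalem} (applicable since here $p>2=\max\{2,1+r\}$) gives, for each $\eps>0$ and all large $x$, the bounds $\Pr[\eta_1\geq x]\geq x^{-(1+r)/2}(\log x)^{-\eps}$ and $\Pr[\eta_1\geq x]\leq x^{-(1+r)/2}(\log x)^{2+r+\eps}$. The lower bound in the lemma follows for the whole range $-1<r\leq 1$ from Lemma \ref{mz}(ii), applied with $\theta=(1+r)/2\in(0,\infty)$ and $\phi=-\eps$: this gives, a.s.\ for all but finitely many $n$, $\sum_{i=1}^n\eta_i\geq n^{2/(1+r)}(\log n)^{-\frac{2+2\eps}{1+r}-\eps}$, which is the stated bound since $\eps>0$ is arbitrary. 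When $-1<r<1$ we have $\theta=(1+r)/2\in(0,1)$, so the upper bound likewise follows from Lemma \ref{mz}(i), applied with $\phi=2+r+\eps$: it gives $\sum_{i=1}^n\eta_i\leq n^{2/(1+r)}(\log n)^{(3+r+\eps)\cdot\frac{2}{1+r}+\eps}$ a.s.\ eventually, and $(3+r)\cdot\frac{2}{1+r}=\frac{6+2r}{1+r}$ gives the claim after letting $\eps\downarrow 0$.

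The one genuine obstacle is the upper bound in part (i) at the boundary value $r=1$, where $\theta=1$, $\Exp[\eta_1]=\infty$, and Lemma \ref{mz}(i) no longer applies. Here $\Pr[\eta_1\geq x]\leq x^{-1}(\log x)^{3+\eps}$ is only just non-integrable, so a single excursion may carry a constant fraction of $\sum_{i=1}^n\eta_i$, consistent with the target $n(\log n)^{(6+2r)/(1+r)+\eps}=n(\log n)^{4+\eps}$. I would work along the dyadic subsequence $n\in[2^k,2^{k+1})$, interpolating by monotonicity of the partial sums, with a truncation level $b_k:=2^k(k\log 2)^4$ (of order $n(\log n)^4$), and combine three Borel--Cantelli arguments: (a) a union bound over $i\leq 2^{k+1}$ with the tail estimate shows $\max_{i\leq 2^{k+1}}\eta_i\leq 2^k(k\log 2)^{4+2\eps}$ for all large $k$, a.s.; (b) a second-moment estimate on the binomial count $\#\{i\leq 2^{k+1}:\eta_i>b_k\}$, whose mean is $O(k^{-1+\eps})$, shows this count is at most $1$ for all large $k$, a.s.; and (c) since $\Exp[\eta_1\1\{\eta_1\leq b_k\}]=O((\log b_k)^{4+\eps})=O(k^{4+\eps})$ and $\Exp[\eta_1^2\1\{\eta_1\leq b_k\}]=O(b_k(\log b_k)^{3+\eps})$, the centered truncated partial sums $\sum_{i\leq n}(\eta_i\1\{\eta_i\leq b_k\}-\Exp[\eta_1\1\{\eta_1\leq b_k\}])$ are a sum of independent mean-zero terms to which Kolmogorov's maximal inequality applies, keeping $\sum_{i\leq n}\eta_i\1\{\eta_i\leq b_k\}$ within $O(2^k k^{4+\eps})$ of its mean $O(2^k k^{4+\eps})$ for all large $k$, a.s. Adding the truncated sum (controlled by (c)) to the at most one untruncated term (controlled by (a) via (b)) gives $\sum_{i\leq n}\eta_i=O(2^k k^{4+2\eps})=O(n(\log n)^{4+2\eps})$, whence the claim on letting $\eps\downarrow 0$. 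The same truncation scheme could in fact be run for all $-1<r\leq 1$, bypassing Lemma \ref{mz}(i) altogether.
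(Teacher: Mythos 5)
Your proposal is correct, and for the main body of the lemma it is exactly the paper's argument: part (ii) via the strong law once Theorem \ref{etalem} gives $\Exp[\eta_1]<\infty$ for $r>1$, and part (i) by feeding the two tail bounds of (\ref{etatails}) into Lemma \ref{mz}(ii) (lower bound) and Lemma \ref{mz}(i) (upper bound), with the same bookkeeping $\theta=\tfrac{1+r}{2}$, $\phi=2+r+\eps$ and $(3+r)\cdot\tfrac{2}{1+r}=\tfrac{6+2r}{1+r}$. The one place you diverge is the boundary case $r=1$: you correctly observe that Lemma \ref{mz}(i) as stated requires $\theta\in(0,1)$ and hence does not literally cover $\theta=1$, and you supply a self-contained dyadic truncation argument (max bound by union bound, at most one excursion above the truncation level by a second-moment estimate, Kolmogorov's maximal inequality for the truncated centred sums); I checked the exponents and the three Borel--Cantelli estimates, and they work, yielding $\sum_{i\le n}\eta_i=O(n(\log n)^{4+2\eps})$ as required. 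The paper simply cites Lemma \ref{mz}(i) across the whole range $-1<r\le 1$, implicitly relying on the fact that the underlying result of Feller (Theorem 2 of the cited paper) does extend to $\theta=1$ with the normalising sequence $n(\log n)^{4+\eps}$ when the mean is infinite; your truncation patch makes this boundary case explicit and rigorous without appealing to that extension, at the cost of a somewhat longer argument.
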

 \begin{proof}
 Part (ii) follows from the  strong law of large numbers since
   $\Exp [ \eta_1] < \infty$ for $r >1$, by Theorem \ref{etalem}, while $\Exp [ \eta_1] \neq 0$
 since $\eta_1$ is nondegenerate.
Now
 suppose that  $r \in (-1,1]$. The lower bound in   (i) follows from
the lower bound in (\ref{etatails}) with Lemma
\ref{mz}(ii), while the upper bound in   (i) follows from the upper
bound in (\ref{etatails}) with Lemma \ref{mz}(i). \end{proof}

An inversion of the previous result enables us to complete the proof of our theorem on the number of excursions.
Recall that $N_t = \max \left\{ n \in \N : \sum_{i=1}^n \eta_i \leq t \right\}$.

\begin{proof}[Proof of Theorem \ref{numberthm}.]  For part (i),
fix $\eps>0$. From the lower bound in Lemma \ref{etabounds}(i), we may choose $\eps'>0$ small enough for which,
 a.s., for all $t$ large enough
\[ \sum_{i=1}^{\lceil t^{\frac{1+r}{2}} (\log t)^{1+\eps} \rceil} \eta_i
\geq t (\log t)^{\frac{2\eps}{1+r} -  \eps'} > t ,\]
  giving the upper bound in (\ref{nbounds}).
The lower bound in (\ref{nbounds}) follows similarly from the upper bound in Lemma \ref{etabounds}(i).
Part (ii) follows from Lemma \ref{etabounds}(ii).
\end{proof}

Next we turn to our results on stationary distributions.

\begin{proof}[Proof of Theorem \ref{lemstat}.]
We verify the claimed properties of $\pi$   defined at (\ref{pidef}).
 When $r>1$, we have from Theorem \ref{etalem}
  that $\Exp [\eta_1] \in (0, \infty)$. Since, for any $x$,
 $0 \leq \ell_1 (x) \leq \eta_1$ a.s., it follows that
 $\Exp [ \ell_1 (x) ] < \infty$ for all $x \in \SS$.
 It is clear that $\pi(x) \geq 0$ and $\sum_{x \in \SS} \pi(x) =1$.
To show that $\pi(x) >0$, it suffices to show that $\Exp [ \ell_1 (x) ] >0$.
Suppose that, for some $x \in \SS$,
 $\ell_1 (x) =0$ a.s.  Then by (A3), $L_t(x) =0$ a.s.\ for all $t$. But this contradicts
Lemma \ref{irrlem}. So $\Pr [ \ell_1 (x) >0 ] >0$, which implies $\Exp [ \ell_1 (x) ] >0$.

  Next, note that for any $x \in \SS$, a.s.,
$\sum_{n=1}^{N_t} \ell_n (x) \leq L_t (x) \leq \sum_{n=1}^{N_t+1} \ell_n (x)$.
 Here $(\ell_n (x))_{n \in \N}$ are i.i.d.\ random variables with
 finite means, and so it follows from the strong law of large numbers that
$N_t^{-1} L_t (x) \to \Exp [ \ell_1 (x)]$ a.s.\ for $N_t \to \infty$, which with Theorem \ref{numberthm}(ii)
implies that $t^{-1} L_t(x) \to \frac{\Exp [ \ell_1 (x)]}{ \Exp [ \eta_1]}$ a.s.,
and the $L^q$ convergence follows from the bounded convergence theorem.
Finally the convergence of $\Pr [ X_t = x]$ to $\pi (x)$ follows from
e.g.\ \cite[Corollary VI.1.5, p.\ 171]{asmussen} under the additional `aperiodicity' condition.
\end{proof}

The proofs of our remaining theorems now involve  combining our previous results.

\begin{proof}[Proof of Theorem \ref{xbounds}.] We have that for any $t \in \N$,
\begin{equation}
\label{maxeq} \max_{1 \leq i \leq N_t} M_i \leq \max_{1 \leq s \leq t } X_s \leq \max_{1 \leq i \leq N_t +1 } M_i .\end{equation}
The result follows from Theorem \ref{numberthm} and Lemma \ref{lem6} together with (\ref{maxeq}). \end{proof}

 \begin{proof}[Proof of Theorem \ref{xilem}.] Fix $\alpha \geq 0$ and  $r> -1$.
 First we prove the upper bound in (\ref{xitails}). Clearly $\xi^{(\alpha)}_1 \leq \eta_1 M^\alpha_1$.
It follows that, for any $x>1$,
\begin{align*} \Pr [ \xi^{(\alpha)}_1 \geq x] & \leq \Pr [ \{ \eta_1 \geq x^{\frac{2}{\alpha+2}} (\log x)^{\frac{2\alpha}{\alpha+2}} \}
\cup \{ M_1 \geq x^{\frac{1}{\alpha+2}} (\log x)^{-\frac{2}{\alpha+2}} \} ]  \\
& \leq  \Pr [  \eta_1 \geq x^{\frac{2}{\alpha+2}} (\log x)^{\frac{2\alpha}{\alpha+2}} ] + \Pr [ M_1 \geq x^{\frac{1}{\alpha+2}} (\log x)^{-\frac{2}{\alpha+2}} ] .\end{align*}
Now applying the upper bounds from (\ref{etatails}) and (\ref{mtails}) we obtain the desired
upper bound.

 Next we prove the lower bound in (\ref{xitails}).
  It follows from the $\Phi(x) = x^\alpha$ case of Lemma \ref{lowbound} that there exists $C \in (0,\infty)$ such that,
    for all $x$ large enough,
 \begin{equation}
 \label{claim3}
 \Pr [ \xi^{(\alpha)}_1 \geq x ] \geq \frac{1}{2} \Pr [ M_1 \geq C x^{\frac{1}{\alpha+2}} ] .\end{equation}
 The lower bound in (\ref{xitails}) now follows
  from    (\ref{claim3}) and    the lower bound in (\ref{mtails}).
 \end{proof}

Recall that $S^{(\alpha)}_t =  \sum_{s=1}^t  X_s^\alpha$, so
$S^{(\alpha)}_{\tau_n} = \sum_{i=1}^n  \xi^{(\alpha)}_i$.

 \begin{lm}
 \label{xibounds}
  Suppose that (A0)--(A3) hold. Suppose that $r > -1$ and  (\ref{moms}) holds
with $p > \max \{2,1+r\}$.
 Let $\alpha \geq 0$.
 \begin{itemize}
 \item[ (i)] Suppose that $-1 < r \leq 1 +\alpha$. Then for any $\eps >0$, a.s., for all but finitely many $n$,
\[ n^{\frac{\alpha+2}{1+r}} (\log n)^{-\frac{\alpha+2}{1+r} -\eps}  \leq S^{(\alpha)}_{\tau_n} \leq
n^{\frac{\alpha+2}{1+r}} (\log n)^{\frac{2\alpha+4}{1+r}+2 +\eps}.\]
 \item[(ii)] Suppose that $r > 1+\alpha$. Then as $n \to \infty$, a.s.,
 \begin{equation}
 \label{stau2}
  n^{-1}S^{(\alpha)}_{\tau _n} \to \Exp [\xi^{(\alpha)}_1] = \Exp [ \eta_1] \sum_{x \in \SS} x^\alpha \pi (x) \in (0,\infty), \end{equation}
 where $\pi$ is   given by (\ref{pidef}).
 \end{itemize}
 \end{lm}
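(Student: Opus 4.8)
The plan is to reduce everything to partial sums of i.i.d.\ nonnegative random variables. By part~(a) of (A3) the sequence $(\xi^{(\alpha)}_i)_{i\in\N}$ is i.i.d.\ and nonnegative (we are in the recurrent case, since $r>-1$, so $N=\infty$ a.s.), and by definition $S^{(\alpha)}_{\tau_n}=\sum_{i=1}^n\xi^{(\alpha)}_i$. Write $\theta:=\tfrac{1+r}{\alpha+2}$ for the tail exponent supplied by Theorem~\ref{xilem}; the case $\alpha=0$ of the lemma is precisely Lemma~\ref{etabounds}, and the present argument is its direct generalization.

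For part~(i) we have $-1<r\le 1+\alpha$, i.e.\ $\theta\in(0,1]$. From the lower bound in (\ref{xitails}), for every $\eps>0$ condition (\ref{iid2}) holds with exponents $(\theta,-\eps)$; Lemma~\ref{mz}(ii) then gives, eventually a.s., $\sum_{i=1}^n\xi^{(\alpha)}_i\ge n^{1/\theta}(\log n)^{(-\eps-1)/\theta-\eps}$, and since $1/\theta=\tfrac{\alpha+2}{1+r}$ this is the claimed lower bound after a routine relabelling of $\eps$. Dually, the upper bound in (\ref{xitails}) shows (\ref{iid1}) holds with $\bigl(\theta,\tfrac{2+2r}{\alpha+2}+1+\eps\bigr)$, and Lemma~\ref{mz}(i) yields $\sum_{i=1}^n\xi^{(\alpha)}_i\le n^{1/\theta}(\log n)^{(\phi+1)/\theta+\eps}$, where $(\phi+1)/\theta=2+\tfrac{2\alpha+4}{1+r}+\eps\tfrac{\alpha+2}{1+r}$; this is the claimed upper bound. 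The one subtle point is that Lemma~\ref{mz}(i) is stated only for $\theta\in(0,1)$, so the boundary case $r=1+\alpha$ (where $\theta=1$ and $\Exp[\xi^{(\alpha)}_1]=\infty$) must be handled by hand: truncate at $a_n:=n(\log n)^q$ with $q$ chosen slightly larger than $\tfrac{2\alpha+4}{1+r}+2$, use a Borel--Cantelli argument along $n=2^k$ (with $n\Pr[\xi^{(\alpha)}_1>a_n]$ summable) to see that the truncation is a.s.\ eventually inactive, and control $\sum_{i=1}^n\xi^{(\alpha)}_i\1\{\xi^{(\alpha)}_i\le a_n\}$ via $\Exp[\xi^{(\alpha)}_1\1\{\xi^{(\alpha)}_1\le a_n\}]=O((\log n)^q)$ together with the maximal inequality of \cite[Lemma~3.1]{mvw}. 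This is the same gap that is tacitly present in the proof of Lemma~\ref{etabounds}(i).

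For part~(ii), $r>1+\alpha$ means $\theta>1$; picking $\eps>0$ with $\theta-\eps>1$, Theorem~\ref{xilem} gives $\Exp[(\xi^{(\alpha)}_1)^{1+\delta}]<\infty$ for some $\delta>0$, hence $\Exp[\xi^{(\alpha)}_1]<\infty$, while the lower bound in (\ref{xitails}) shows $\xi^{(\alpha)}_1$ is not a.s.\ zero, so $\Exp[\xi^{(\alpha)}_1]\in(0,\infty)$. The strong law of large numbers gives $n^{-1}S^{(\alpha)}_{\tau_n}\to\Exp[\xi^{(\alpha)}_1]$ a.s. Finally, writing $\xi^{(\alpha)}_1=\sum_{x\in\SS}x^\alpha\ell_1(x)$ as in (\ref{xidef}) and applying Tonelli's theorem, $\Exp[\xi^{(\alpha)}_1]=\sum_{x\in\SS}x^\alpha\Exp[\ell_1(x)]=\Exp[\eta_1]\sum_{x\in\SS}x^\alpha\pi(x)$ by the definition (\ref{pidef}) of $\pi$, which is (\ref{stau2}).

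The step I expect to be the main obstacle is the boundary case $r=1+\alpha$ in part~(i): there the tail exponent equals $1$, $\xi^{(\alpha)}_1$ has infinite mean, and the Marcinkiewicz--Zygmund-type input of Lemma~\ref{mz}(i) is not available as stated, so the truncation-plus-Borel--Cantelli argument sketched above is needed. Away from that boundary the proof is a routine combination of the i.i.d.\ regenerative structure from (A3), the tail estimates of Theorem~\ref{xilem}, and the i.i.d.\ maximum/sum lemmas~\ref{lem7} and~\ref{mz}.
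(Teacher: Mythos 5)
Your proposal is correct and follows essentially the same route as the paper: part (i) is the tail bounds of Theorem~\ref{xilem} fed into Lemma~\ref{mz}, and part (ii) is the strong law plus the identity $\Exp[\xi^{(\alpha)}_1]=\sum_{x\in\SS}x^\alpha\Exp[\ell_1(x)]=\Exp[\eta_1]\sum_x x^\alpha\pi(x)$ from (\ref{xidef}) and (\ref{pidef}). Your remark about the boundary $r=1+\alpha$ (where $\theta=1$) is a fair point about how Lemma~\ref{mz}(i) is \emph{stated}, but the intended resolution is simpler than your truncation scheme: Feller's theorem \cite{feller1}, which is the result invoked in the proof of Lemma~\ref{mz}(i), applies to nonnegative i.i.d.\ variables with infinite mean whenever $a_n/n$ is nondecreasing, and at $\theta=1$ one has $\Exp[\xi^{(\alpha)}_1]=\infty$ by Theorem~\ref{xilem} and $\sum_n\Pr[\xi^{(\alpha)}_1\ge n(\log n)^{\phi+1+\eps}]<\infty$, so the same conclusion holds with $\theta=1$ included. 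If you do run the truncation argument instead, note that controlling the truncated sum by its first moment together with the maximal inequality of \cite[Lemma 3.1]{mvw} costs an extra factor $(\log n)^{1+\eps}$ beyond the mean and so overshoots the claimed exponent $\frac{2\alpha+4}{1+r}+2+\eps$; you would need a truncated second-moment (Chebyshev) estimate along $n=2^k$ to recover the stated bound.
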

 \begin{proof}
First we prove part (ii).
 For $r > 1+\alpha$, $\Exp [ \xi_1^{(\alpha)} ] < \infty$ by Theorem \ref{xilem}.
Then, by (\ref{xidef}),
$\Exp [ \xi^{(\alpha)}_1]    = \sum_{x \in \SS} x^\alpha \Exp [ \ell_1 (x) ]$,
 so, by (\ref{pidef}),
  the two
 expressions for limiting constant in (\ref{stau2}) are indeed equivalent.
 Also, $\Exp [ \xi_1^{(\alpha)} ] >0$ since, by Theorem \ref{lemstat}, $\pi(x)>0$ for all $x \in \SS$.
  The convergence in (\ref{stau2}) follows from the  strong law of large numbers.

 Now for part (i),
 suppose that   $r \in (-1,\alpha+1]$. Then the lower bound in part (i) follows from
the lower bound in (\ref{xitails}) and Lemma
\ref{mz}(ii). The upper bound in part (i) follows from the upper
bound in (\ref{xitails}) and Lemma \ref{mz}(i). \end{proof}

 \begin{proof}[Proof of Theorem \ref{sthm}.]
 By definition of $S^{(\alpha)}_t$ and $N_t$,  for any $t \in \N$,
 \begin{equation}
 \label{eq22} S^{(\alpha)}_{\tau_{N_t}} \leq S^{(\alpha)}_t \leq S^{(\alpha)}_{\tau_{N_t+1}} .\end{equation}

 For $-1<r \leq 1+\alpha$, we have from Lemma \ref{xibounds}(i) with (\ref{eq22})
 that for any $\eps>0$, a.s., for all but finitely many $t$,
 \begin{equation}
 \label{eq23} N_t^{\frac{\alpha+2}{1+r}} (\log t)^{-\frac{\alpha+2}{1+r} -\eps} \leq S_t^{(\alpha)}
 \leq  (N_t+1)^{\frac{\alpha+2}{1+r}}  (\log t)^{\frac{2\alpha+4}{1+r} +2+\eps} ,\end{equation}
  using the fact that $N_t \leq t$ a.s.\ to obtain the logarithmic terms.
 Now from (\ref{eq23}) we obtain part (i) of the theorem by applying
 the bounds for $N_t$ in Theorem \ref{numberthm}(i) and we obtain part (ii)
 of the theorem from Theorem \ref{numberthm}(ii).

 Finally, suppose that $r>1+\alpha$. We have from (\ref{eq22})
 that
 \[ (t^{-1} N_t) N_t^{-1}  S^{(\alpha)}_{\tau_{N_t}} \leq t^{-1} S^{(\alpha)}_t
 \leq (t^{-1} (N_t +1) ) (N_t +1)^{-1}  S^{(\alpha)}_{\tau_{N_t+1}} .\]
 Both $t^{-1} N_t$ and $t^{-1} (N_t+1)$ converge a.s.\
 to $\Exp [\eta_1]^{-1}$ by Theorem \ref{numberthm}(ii), while Lemma \ref{xibounds}(ii)
 and the fact that $N_t \to \infty$ a.s.\ as $t \to \infty$ imply that
 both $N_t^{-1}  S^{(\alpha)}_{\tau_{N_t}}$ and
 $(N_t+1)^{-1}  S^{(\alpha)}_{\tau_{N_t+1}}$ converge a.s.\ to $\Exp [ \xi_1^{(\alpha)}]$.
Hence we obtain the first limit statement in
(\ref{nudef}); for the subsequent equality in (\ref{nudef})  we use
the expression for $\Exp [ \xi_1^{(\alpha)} ]$ given in (\ref{stau2}).
  \end{proof}

\section{Proofs for Section \ref{sec:appl}}
\label{sec:proofs2}

\subsection{Proofs for Section \ref{sec:line}}

In this section we prove our results on processes on the whole real line. The proofs use the same ideas
as those for our results from Section \ref{sec:results}, so we do not dwell on the details.

\begin{proof}[Proof of Theorem \ref{line1}.]
We again use $\tau_0, \tau_1, \ldots$ to denote the times at which $X_t = 0$, and $\eta_n := \tau_n - \tau_{n-1}$. Conditions (B0) and (B1) ensure
that $\eta_1, \eta_2,\ldots$ are i.i.d., and $\eta_1$ has the same distribution as
$\theta_+ \eta_{+} + \theta_- \eta_- + (1-\theta_+ -\theta_-)$, where $\theta_+ := \1 \{ X_2 > 0 \}$, $\theta_- := \1 \{ X_2 < 0\}$ and (by (B2))
$\eta_\pm$ is the return time for a half-line model of Section \ref{sec:results} with $r = r_\pm$, independent of $\theta_+$ and $\theta_-$.
Since $-1 < r_+ < r_- \leq 1$, and $\Exp [ \theta_\pm ] >0$, it follows from
Theorem \ref{etalem} that $\Pr [ \eta_1 \geq x ]  = x^{- \frac{1+r_+}{2} +o(1)}$;
hence the process is null-recurrent.
By a similar argument to Theorem \ref{numberthm}, since each excursion takes either sign with uniformly
positive probability,
 there are $t^{\frac{1+r_+}{2} +o_\omega(1)}$ excursions of each sign by time $t$, where the notation
 $\eps_t = o_\omega (t)$ means that the (random) sequence $\eps_t$ satisfies $\eps_t \to 0$, a.s., as $t \to \infty$
 (in other words, $o_\omega(t)$ is an extension of the Landau $o(1)$ notation in which the implicit constants are allowed
 to depend on the probability space element $\omega$).
The result then follows as in the proof of Theorem \ref{xbounds}, using Lemma \ref{lem6}.
\end{proof}

\begin{proof}[Proof of Theorem \ref{line2}.]
Again, we use the fact that the numbers of positive or negative excursions up until time $t$ are both $t^{\frac{1+r_+}{2} +o_\omega(1)}$, a.s.
Then Lemma \ref{xibounds} and an argument similar to the proof of Theorem \ref{sthm} applied separately to the positive and negative parts $\sum_{s=1}^t X_s^+$ and $\sum_{s=1}^t X_s^-$
shows that, a.s., the latter is $t^{\frac{3}{2} \frac{1+r_+}{1+r_-} + o_\omega(1)}$ while the former is
$t^{\frac{3}{2} + o_\omega(1)}$, which therefore dominates the asymptotics, yielding the result.
\end{proof}

\subsection{Proofs for Section \ref{sec:nonhom}}

We write $\be_1,\ldots, \be_d$ for the standard orthonormal
basis of $\R^d$, and for vectors $\bu, \bv \in \R^d$
we use $\bu \cdot \bv$ to denote
their scalar product.

\begin{proof}[Proof of Theorem \ref{cbrw}.]
Suppose that (C0) holds. Take $\F_t = \sigma (\xi_1, \xi_2, \ldots, \xi_t)$, $X_t = \| \xi_t \|$, and
 $\SS = \{ \| \bx \| : \bx \in \Sigma \} \subset [0,\infty)$. Then $0 \in \SS$ since $\0 \in \Sigma$,
 and, by local finiteness of $\Sigma$,
$\{ \bx \in \Sigma : \| \bx \| = x\}$ is finite for any $x \in \SS$. Hence (A0) follows.
  Next we verify (A1).
By irreducibility of $\Xi$,  for any $\bx, \by \in \Sigma$, there exist $k (\bx, \by) \in \N$
and $\kappa (\bx, \by) >0$ such that
$\Pr [ \xi_{t + k (\bx, \by) } = \by \mid \xi_t = \bx ] = \kappa ( \bx , \by) > 0$.
Let $x= \| \bx \|$ and $y = \| \by \|$, so $x,y \in \SS$.  Then, a.s.,
\begin{align*}
 \Pr [ X_{t+ k (\xi_t , \by)} = y \mid \F_t ] & = \kappa ( \xi_t , \by ) \\
 & \geq \min \{ \kappa ( \bx , \by) : \bx \in \Sigma, \| \bx \| = \| \xi_t \|, \; \by \in \Sigma, \| \by \| = y   \};\end{align*}
denote this last  quantity $\varphi ( \| \xi_t \| , y)$. Then $\varphi ( \| \xi_t \| , y) > 0$   by the finiteness of the
 sets over which $\bx$ and $\by$ run. We  choose $\by$ with $\| \by \| = y$, and for that $\by$ take $m ( \|\xi_t\| , y ) = k (\xi_t, \by ) < \infty$.
This shows that (A1) holds. Moreover, (A3) follows from the fact that $\Xi$ is an irreducible  Markov chain.
Also, by the triangle inequality,
$| X_{t+1} - X_t | = | \| \xi_{t+1} \| - \| \xi_t \| | \leq \| \xi_{t+1} - \xi_t \|$,
so if (\ref{moms2}) holds for some $p>0$, then so does (\ref{moms}).

It remains to show that (C1) implies (A2).
Let $\gamma \in (0,1)$, to be chosen later.
We will  estimate the increment
$\| \xi_t + \theta_t \| - \| \xi_t \|$ by Taylor's theorem in $\R^d$. First observe that
\[ \frac{\partial}{\partial x_i} \| \bx \| = \frac{x_i}{\| \bx \|} ; ~~~ \frac{\partial^2}{\partial x_i \partial x_j} \| \bx \|
= \frac{ \1 \{ i=j \} }{\|\bx\|} - \frac{x_i x_j}{\| \bx \|^3 } ; ~~~ \left| \frac{\partial^3}{\partial x_i \partial x_j \partial x_k} \| \bx \| \right| = O ( \|\bx \|^{-2} ) .\]
Then by Taylor's formula, for any $\bx \in \R^d$,
\begin{align}
\label{eq91}
\left( \| \bx + \theta_t \| - \| \bx \| \right) \1 \{ \| \theta_t \| \leq \| \bx \|^\gamma \}
 & = \sum_{i=1}^d \frac{x_i}{\| \bx \|} ( \theta_t \cdot \be_i )  \1 \{ \| \theta_t \| \leq \| \bx \|^\gamma \} \nonumber\\
 & ~~{}  + \frac{1}{2} \sum_{i=1}^d
\left( \frac{1}{\| \bx\|} - \frac{x_i^2}{\| \bx\|^3} \right) ( \theta_t \cdot \be_i )^2  \1 \{ \| \theta_t \| \leq \| \bx \|^\gamma \} \nonumber\\
& ~~ {}
 - \sum_{i=2}^d \sum_{j=1}^{i-1} \frac{x_i x_j}{\| \bx \|^3} ( \theta_t \cdot \be_i ) ( \theta_t \cdot \be_j ) \1 \{ \| \theta_t \| \leq \| \bx \|^\gamma \} \nonumber\\
 & ~~ {}  + O \left( \| \theta_t \|^3 \| \bx \|^{-2} \1 \{ \| \theta_t \| \leq \| \bx \|^\gamma \} \right) .\end{align}
We will condition on $\xi_t = \bx$ and take expectations in (\ref{eq91}).
To this end, note that
\[ \Exp [ \| \theta_t \|^3  \| \bx \|^{-2} \1 \{ \| \theta_t \| \leq \| \bx \|^\gamma \} \mid \xi_t = \bx]
 \leq \| \bx \|^{\gamma -2}\Exp [ \| \theta_t \|^2  \mid \xi_t = \bx ] = O ( \|\bx \|^{\gamma -2} ) .\]
 In addition, for $q \in [0,2]$ a similar argument to Lemma \ref{deltail} shows that, for some $\eps>0$ and $\gamma$ close enough to $1$,
 \begin{equation}
 \label{eq92}
  \Exp [ (\theta_t \cdot \be_i )^q \1 \{ \| \theta_t \| > \| \bx \|^\gamma \} \mid \xi_t = \bx] = O ( \| \bx \|^{q-2-\eps} ) .\end{equation}
 The $q=2$ case of (\ref{eq92}),  with the Cauchy--Schwarz inequality, shows that
 \[  \Exp [ (\theta_t \cdot \be_i ) (\theta_t \cdot \be_j ) \1 \{ \| \theta_t \| > \| \bx \|^\gamma \} \mid \xi_t = \bx] = O ( \| \bx \|^{-\eps} ) .\]
 Also, we have from (C1) that
 \begin{align*} \Exp [ \theta_t \cdot \be_i \mid \xi_t = \bx ] & = \be_i \cdot \mu (\bx) = \frac{ \rho x_i}{\| \bx \|^2} + o ( \| \bx \|^{-1} \log ^{-1} \| \bx \| ) ; \\
 \Exp[ ( \theta_t \cdot \be_i) ( \theta_t \cdot \be_j) \mid \xi_t = \bx ] & = M_{ i j} (\bx ) = \sigma^2 \1 \{ i = j\} + o (  \log ^{-1} \| \bx \| ) .\end{align*}
 Combining these estimates, taking expectations in (\ref{eq91}) yields
 \begin{align*}
& ~~{} \Exp \left[ \left( \| \bx + \theta_t \| - \| \bx \| \right) \1 \{ \| \theta_t \| \leq \| \bx \|^\gamma \}
 \mid \xi_t = \bx \right] \\
 & = \sum_{i=1}^d \frac{\rho x_i^2}{\| \bx \|^3} + \frac{1}{2} \sum_{i=1}^d \left( \frac{1}{\| \bx\|} - \frac{x_i^2}{\| \bx\|^3} \right) \sigma^2
 + o ( \| \bx \|^{-1} \log^{-1} \| \bx \| ) \\
 & = \left( \rho + \frac{\sigma^2}{2} (d-1) \right) \| \bx \|^{-1}  + o ( \| \bx \|^{-1} \log^{-1} \| \bx \| ).\end{align*}
 On the other hand, by the triangle inequality,
 \[ \Exp \left[ \left| \| \bx + \theta_t \| - \| \bx \| \right| \1 \{ \| \theta_t \| > \| \bx \|^\gamma \}
 \mid \xi_t = \bx \right]   \leq   \Exp \left[ \| \theta_t \|  \1 \{ \| \theta_t \| > \| \bx \|^\gamma \} \right] ,\]
 which, for $\gamma$ close enough to $1$, is also $o ( \| \bx \|^{-1} \log^{-1} \| \bx \| )$ by another application of (\ref{eq92}).
 Thus we have shown that
 \begin{equation}
 \label{eq95}
  \Exp [ X_{t+1} - X_t \mid \xi_t = \bx ] = \left( \rho + \frac{\sigma^2}{2} (d-1) \right) \| \bx \|^{-1}  + o ( \| \bx \|^{-1} \log^{-1} \| \bx \| ) ,\end{equation}
 which implies that (\ref{mu1}) holds with $c = \rho + (d-1) (\sigma^2/2)$.

 For the second moment estimate, observe that, given $\xi_t = \bx$,
 \begin{align}
 \label{eq93} ( X_{t+1} - X_t )^2 & = \| \bx + \theta_t \|^2 - \| \bx \|^2 -2 \| \bx \| ( \| \bx + \theta_t \| - \| \bx \| ) \nonumber\\
 & = \| \theta_t\|^2 + 2 \bx \cdot \theta_t   -2 \| \bx \| ( X_{t+1} - X_t ) .\end{align}
 Here we have that
 \begin{equation}
 \label{eq94} \Exp [ \| \theta_t \|^2 + 2 \bx \cdot \theta_t \mid \xi_t = \bx ] = \sum_{i=1}^d M_{ii} (\bx) + 2 \bx \cdot \mu (\bx) = d \sigma^2 + 2 \rho +
 o ( \log^{-1} \| \bx \| ) .\end{equation}
 Taking expectations in  (\ref{eq93}), using (\ref{eq94}) and (\ref{eq95}), we obtain
 \[ \Exp [ (X_{t+1} - X_t)^2 \mid \xi_t = \bx ] =  d \sigma^2 + 2 \rho - 2 ( \rho + (d-1) (\sigma^2/2) ) +  o ( \log^{-1} \| \bx \| ) ,\]
 which, after simplification, shows that (\ref{mu2}) holds with $s^2 = \sigma^2$.
\end{proof}

\subsection{Proofs for Section \ref{sec:shu}}
\label{shuproofs}

First we prove the following analogue of Lemma 7.6 of \cite{shu}. As in \cite{shu}, we relate the general
version of $Z_t$ to the special case in which $\kappa = 0$ a.s., which we denote here by $Z_t'$.
By construction, for $x, y \in \N$,
\begin{equation}
\label{zz}
 \Pr [ Z_{t+1} = y \mid Z_t = x ] = \Exp \left[ \Pr[ Z'_{t+1} = y \mid Z'_t = x - \min \{ \kappa ,   x-1 \} ] \right] .\end{equation}
Write $D_t := Z_{t+1} - Z_t$.

\begin{lm}
\label{shulem}
For any $\eps>0$, as $x \to \infty$,
\begin{equation}
\label{dtail}
 \Pr [ | D_t | > x^{(1/2)+\eps} \mid Z_t = x] = O ( \exp \{ - x^{\eps /3} \} ) .\end{equation}
Also, for any $r \in \N$ there exists $C < \infty$ for which, for all $x \in \N$,
\[ \Exp [ | D_t |^r \mid Z_t = x ] \leq C x^{r/2} .\]
Moreover,   as $x \to \infty$,
\begin{align}
\Exp [ D_t \mid Z_t = x ] & = \frac{2}{3} - \Exp [ \kappa ] + o ( \log^{-1} x ) , \label{d1} \\
\Exp [ D_t^2 \mid Z_t = x ] & = \frac{2}{3} x    + o( x \log^{-1} x) . \label{d2} \end{align}
 \end{lm}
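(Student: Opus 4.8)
The plan is to reduce all four assertions to a precise analysis of a single quadrant traversal of the \emph{pure} simple harmonic walk (the case $\kappa=0$, written $Z'_t$ in (\ref{zz})), and then to reinstate $\kappa$ via (\ref{zz}) and the exponential moment bound on $\kappa$. When $Z_t=x$, after the axis jump the chain sits at a lattice point at distance $n:=x-\min\{\kappa,x-1\}$ from the origin, one step inside the next quadrant, and $Z_{t+1}$ equals the distance $B$ at which the ensuing unit-step traversal of that quadrant first hits the following axis; thus $D_t=(B-n)-\min\{\kappa,x-1\}$. Since $\Exp[\re^{\lambda|\kappa|}]<\infty$ we have $\Pr[\kappa\geq x-1]=O(\re^{-\lambda x})$ and $\Exp[(\kappa-(x-1))^+]=O(\re^{-\lambda x})$, and a crude bound gives $\Exp[|B-n|\mid n]=O(n)$, so every estimate for $D_t$ will follow from the corresponding estimate for $D':=B-n$ (the pure increment started from distance $n$), averaged over $n=x-\kappa$, provided the $D'$-estimates hold uniformly for $n$ in a window around $x$. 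By the fourfold symmetry of the model we may take the traversal to run in the first quadrant, from $(n,1)$ to $(0,B)$.

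For that traversal write $R_s^2:=a_s^2+b_s^2$ for the position at step $s$. A one-line computation gives $\Exp[R_{s+1}^2-R_s^2\mid\F_s]=1$, so $R_s^2-s$ is a martingale whose increments lie in $\{2b_s,-2a_s\}$; moreover the traversal takes \emph{deterministically} $S=n+B-1$ steps ($n$ horizontal and $B-1$ vertical). Optional stopping at the exit time $S$ yields $\Exp[B^2]=n^2+1+\Exp[S]=n^2+n+\Exp[B]$, equivalently the \emph{exact} identity $D'(2n-1+D')=2n-1+M_S$ with $M_S:=R_S^2-n^2-S$, and hence $\Exp[(D')^2]+(2n-1)\Exp[D']=2n$. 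Thus the first moment of $D'$ is slaved to its second moment, and the whole problem collapses to the claim that $\Exp[(D')^2]=\tfrac23 n+o(n/\log n)$; granting this, $\Exp[D']=(2n-\Exp[(D')^2])/(2n-1)=\tfrac23+o(1/\log n)$, and transferring through the reduction above (the $-\min\{\kappa,x-1\}$ term contributes $-\Exp[\kappa]+o(1/\log x)$ to the first moment and only $O(1)$ to the second) gives (\ref{d1}) and (\ref{d2}).

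Establishing $\Exp[(D')^2]=\tfrac23 n+o(n/\log n)$ is the crux. The martingale $M_s:=R_s^2-n^2-s$ has predictable quadratic variation $\langle M\rangle_s=\sum_{u<s}4a_ub_u$, so $\Exp[M_S^2]=\Exp\big[\sum_{s<S}4a_sb_s\big]$; combined with the exact identity $D'(2n-1+D')=2n-1+M_S$ this gives $\Exp[(D')^2]=\Exp[M_S^2]/(4n^2)+o(n/\log n)$ once $\Exp[M_S^2]=O(n^3)$ and a suitable concentration of $D'$ are in hand. The constant then comes from geometry: the traversal concentrates near the circular arc $a^2+b^2=n^2$, so parametrising by angle $\phi\in[0,\pi/2]$ (with $a\approx n\cos\phi$, $b\approx n\sin\phi$, and $\approx n(\sin\phi+\cos\phi)\,\ud\phi$ steps while $\phi$ moves over $[\phi,\phi+\ud\phi]$) gives $\sum_{s<S}a_sb_s\approx n^3\int_0^{\pi/2}\sin\phi\cos\phi(\sin\phi+\cos\phi)\,\ud\phi=\tfrac23 n^3$, whence $\Exp[M_S^2]=\tfrac83 n^3+o(n^3)$ and the claim. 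To make this rigorous I would (i) prove $\max_{s\leq S}|R_s-n|=O(n^{1/2}\log^2 n)$ off an event of probability $o(n^{-2})$, using the martingale $R_s^2-s$, the bound $S=O(n)$, a maximal inequality and a Freedman-type deviation estimate, after a bootstrap a priori bound such as $\max_s R_s\leq 2n$; (ii) upgrade the arc heuristic to a genuine Riemann-sum estimate for $\sum_{s<S}a_sb_s$ on this good event, using that the traversal spends $G_k+1$ steps at horizontal coordinate $k$ with $b$ ranging over an interval of length $G_k=b_{k-1}-b_k$ determined by the near-circle constraint up to lower-order fluctuations (the endpoint ranges $k\approx 0$ and $k\approx n$ being handled crudely, as they contribute $o(n^3)$), noting that the single ``drop'' step at each level contributes the same $\tfrac13 n^3$ to the sum as the $G_k$ vertical steps, which is what produces $\tfrac23$ rather than $\tfrac13$; and (iii) absorb the exceptional event into the error using crude moment bounds for $B$. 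I expect step (ii) --- pinning down the number of visits to the lattice points on the arc accurately enough to recover $\tfrac23$ with error $o(n/\log n)$ --- to be the hardest part.

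The tail bound (\ref{dtail}) and the bounds $\Exp[|D_t|^r\mid Z_t=x]\leq Cx^{r/2}$ are then comparatively routine. On the good event $|D'|\leq C|M_S|/n+O(1)$ by the exact identity, while $M_S$ is a martingale with increments $\leq 2\max_s R_s=O(n)$ and quadratic variation $O(n^3)$; an Azuma--Freedman bound gives $\Pr[|D'|>n^{1/2+\eps}]\leq\exp(-cn^{2\eps})$ plus the off-good-event term, and Burkholder--Davis--Gundy (or integrating the tail) gives $\Exp[|D'|^r]\leq Cn^{r/2}$. Transferring, $\Pr[|D_t|>x^{1/2+\eps}\mid Z_t=x]\leq\Pr[\kappa>\tfrac12 x^{1/2+\eps}]+\sup_{n\geq x/2}\Pr[|D'|>\tfrac12 x^{1/2+\eps}]=O(\re^{-\lambda x^{1/2+\eps}/2})+O(\exp(-cx^{2\eps}))=O(\exp(-x^{\eps/3}))$, and $\Exp[|D_t|^r\mid Z_t=x]\leq C\Exp[|D'|^r]+C\Exp[|\kappa|^r]=O(x^{r/2})$.
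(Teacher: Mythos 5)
Your reduction to the pure case $\kappa=0$ via (\ref{zz}) and the exponential tail of $\kappa$ is exactly the paper's transfer step, but from there the two arguments diverge: the paper does \emph{not} re-derive the pure-case estimates at all --- it imports them from the simple harmonic urn paper \cite{shu} (the tail bound of Lemma 6.3 there, the drift estimate $\Exp[D_t'\mid Z_t'=x]=\tfrac23+O(\re^{-\alpha x})$ of Lemma 6.4, and the moment estimates of Lemma 7.6), where they are obtained from the exact structure of that model, and the proof here consists only of pushing those estimates through (\ref{zz}) with the bound $\Pr[|\kappa|>r]\leq C\re^{-\lambda r}$. You instead rebuild the pure-case facts from scratch by a martingale analysis of one quadrant traversal. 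That part of your proposal is sound where it is explicit: $R_s^2-s$ is indeed a martingale for the traversal dynamics, the traversal length is exactly $n+B-1$, optional stopping gives the identity $\Exp[(D')^2]+(2n-1)\Exp[D']=2n$ which correctly slaves the drift to the second moment, and the arc computation $4n^3\int_0^{\pi/2}\sin\phi\cos\phi(\sin\phi+\cos\phi)\,\ud\phi=\tfrac83 n^3$ produces the right constant $\tfrac23$; your handling of the $\kappa$-corrections to (\ref{d1}) and (\ref{d2}) and the derivation of (\ref{dtail}) and the $r$-th moment bounds from a Freedman/BDG estimate for $M_S$ are also in order. What your route buys is self-containedness and robustness (no appeal to the exact computations of \cite{shu}); what it gives up is precision --- you can only expect polynomial errors, versus the exponentially small errors of \cite{shu} --- though the $o(\log^{-1}x)$ accuracy demanded by the lemma is comfortably within reach of a concentration argument.

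The reservation is that your crux, $\Exp[(D')^2]=\tfrac23 n+o(n/\log n)$, is presented as a programme (steps (i)--(iii)) rather than a proof, and you yourself flag (ii) as open. I see no step that would fail, and (ii) is in fact less delicate than you fear: writing the sum as $\sum_{k=1}^n k\beta_{k-1}+\sum_{b=1}^{B-1}a(b)\,b$ (one left-step per abscissa, one up-step per ordinate), the event $\max_s|R_s^2-n^2|=O(n^{3/2}\log^2 n)$ pins each of $\beta_{k-1}$ and $a(b)$ to the arc closely enough that the total error in the bulk is $O(n^{5/2}\log^2 n)=o(n^3/\log n)$, with the endpoint regions crude as you say. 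But to make the argument stand alone you still need to supply: the concentration estimate (i) itself (Freedman on dyadic blocks, with an a priori truncation of the increment bound $2\max_s R_s$); the identity $\Exp[M_S^2]=\Exp[\langle M\rangle_S]$ at the unbounded stopping time $S$ (stop at $S\wedge m$ and use crude moments of $B$, which must be bootstrapped first since your BDG bound quotes them); absorption of the bad event into the expectation via those crude moments; and the uniformity in $n$ over the window $|n-x|\leq\sqrt x$ that your transfer step tacitly invokes. None of these is a missing idea, but as written the hardest part of the lemma is outlined rather than proved, whereas the paper discharges it by citation.
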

\begin{proof}
The proof is similar to that of Lemma 7.6 in \cite{shu}; we sketch the differences, which
are due to the fact that we use (\ref{zz}) in place of the final statement of Lemma 7.5 in \cite{shu}.
Write $D_t':= Z_{t+1}'-Z_t'$. Lemma 6.4 in \cite{shu} says that, for a given $\alpha >0$,
\begin{equation}
\label{d0}
\Exp [ D_t' \mid Z_t' = x] = \frac{2}{3} + O (\re^{-\alpha x}).
\end{equation}
We also note that, by Markov's inequality and our tail assumption on $\kappa$, there is $C<\infty$ for which,
for all $r \geq 1$,
\begin{equation}
\label{kap}
\Pr [ | \kappa | > r ] \leq C \re^{-\lambda r} .\end{equation}

We prove (\ref{dtail}). By (\ref{zz}), for any $\eps>0$, and any $x >1$,
\[ \Pr [ | D_t | > r \mid Z_t = x] \leq \Pr [ | \kappa | > x^\eps ] + \sup_{y : | x-y | \leq x^\eps }
 \Pr [ | D'_t | > r - x^\eps \mid Z'_t =x ] .\]
 Taking $r = x^{(1/2)+\eps}$, using (\ref{kap}) and the tail bound for $D_t'$ given in Lemma 6.3 of \cite{shu},
 we verify (\ref{dtail}). For (\ref{d1}),
it follows from (\ref{zz}) that
\[ \Exp [ D_t \mid Z_t = x] = - \Exp [ \min \{ \kappa , x-1\} ] + \Exp \left[ \Exp [ D_t' \mid Z_t' = x - \kappa ] \right] .\]
Here, as in the proof of Lemma 7.6 in \cite{shu}, $\Exp [ \min \{ \kappa , x-1\} ] = \Exp [ \kappa] + O (\exp \{-\lambda x/2\})$.
Also, using the fact that $\sup_x \Exp [ D_t' \mid Z_t' = x] < C < \infty$ by (\ref{d0}), we have
\begin{align*} \Exp \left[ \Exp [ D_t' \mid Z_t' = x - \kappa ] \right]
& \leq C \Pr [ | \kappa | > \sqrt{x} ] + \sup_{y : | y- x | \leq \sqrt{x}} \Exp[ D_t' \mid Z'_t = y ] ,\end{align*}
which with (\ref{d0}) and (\ref{kap})  gives the upper bound in (\ref{d1}), a similar argument yielding the lower bound. Similar
variations of the arguments in the proof of Lemma 7.6 of \cite{shu} give
the remaining parts of the lemma.
\end{proof}

\begin{proof}[Proof of Proposition \ref{shuprop}.]
Proposition \ref{shuprop} follows from Lemma \ref{shulem} in exactly the same way as
Lemma 7.7 in \cite{shu} follows from Lemma 7.6 there.
\end{proof}

\begin{proof}[Proof of Theorem \ref{shuthm}.]
We proceed as in the proof of Theorem 2.6 of \cite{shu}, but instead of Lemma 8.3 in \cite{shu},
we apply our sharper Theorem \ref{xilem}. The details involve minor modifications to
the arguments in \cite{shu}. Here we merely give some intuition as to why $\xi_1^{(2)}$ appears.
The key fact is that, ignoring the jumps driven by $\kappa$,  the original process
 takes $Z_t + Z_{t+1}$ steps to traverse the quadrant between times $\nu_t$ and $\nu_{t+1}$;
 the correction to this due to the jump of size $\kappa_t$ is small. Hence over one excursion
 of the embedded process $X_t = \sqrt{Z_t-1}$, the original process accumulates time
 $\tau \approx \sum_{t=1}^{\tau_q} X_t^2$, which is exactly of the form of the excursion sum $\xi_1^{(2)}$.
\end{proof}

\end{document}